\documentclass[11pt]{amsart}

\usepackage{amsfonts}
\usepackage{graphicx}
\usepackage{hyperref}
\usepackage{xcolor}
\usepackage{amsmath, amsthm, amssymb, dsfont}
\usepackage{epsfig}
\usepackage{array}
\usepackage{amsrefs}
\usepackage{a4wide}
\usepackage{enumitem}
\usepackage{amssymb}
\usepackage{graphicx}
\usepackage{amsxtra}
\usepackage{mathbbol}
\usepackage{comment}
\usepackage{etoolbox}
\usepackage{amstext}
\usepackage{mathtools}
 \usepackage[normalem]{ulem}
\usepackage{microtype}

\DeclareSymbolFontAlphabet{\amsmathbb}{AMSb}

\oddsidemargin 0.5 cm

\evensidemargin 0.5 cm

\textwidth 16.1cm

\numberwithin{equation}{section}

\newtheorem*{theorem*}{Theorem}
\newtheorem{theorem}{Theorem}[section]
\newtheorem{lemma}[theorem]{Lemma}
\newtheorem{remark}[theorem]{Remark}
\newtheorem{corollary}[theorem]{Corollary}
\newtheorem{proposition}[theorem]{Proposition}
\newtheorem{definition}[theorem]{Definition}
\newtheorem{example}[theorem]{Example}

\newcommand{\I}{\mathds{1}}
\newcommand{\st}{\;\vline\;}
\newcommand{\G}{\mathbb G}
\newcommand{\X}{\mathbb X}
\newcommand{\fF}{\mathcal F}
\newcommand{\h}{\mathbb H}

\newcommand{\field}[1]{\mathbb{#1}}
\newcommand{\C}{{\mathbb{C}}}

\newcommand{\la}{{\langle\,}}
\newcommand{\ra}{\,\rangle}
\newcommand{\id}{\mathrm{id}}
\newcommand{\ot}{{\, \otimes \, }}
\newcommand{\om}{{\omega}}

\newcommand{\vtp}{{\,\overline{\otimes}\,}}

\newcommand{\B}{{\mathcal{B}}}
\newcommand{\ad}{{\mathrm{ad}}}

\newcommand{\LL}{{L^{\infty}(\G)}}
\newcommand{\LO}{{{L}^{1}(\G)}}
\newcommand{\LT}{{L^{2}(\G)}}
\newcommand\numberthis{\addtocounter{equation}{1}\tag{\theequation}}

\newcommand{\PG}{{\mathcal{P}}}

\newcommand{\LLL}{{L^{\infty}(\widehat{\G})}}

\newcommand{\act}{\curvearrowright}
\newcommand{\R}{{\field{R}}}

\newcommand{\ww}{\mathrm{W}}
\newcommand{\WW}{{\mathds{V}\!\!\text{\reflectbox{$\mathds{V}$}}}}
\newcommand{\Ww}{\mathds{W}}
\newcommand{\wW}{\text{\reflectbox{$\Ww$}}\:\!}
\newcommand{\vv}{\mathrm{V}}

\newcommand{\norm}[1]{\|#1\|}

\newcommand{\fO}{\mathcal{O}}
\newcommand{\cst}{\ifmmode\mathrm{C}^*\else{$\mathrm{C}^*$}\fi}
\newcommand{\fP}{\mathcal{P}}
\newcommand{\hh}[1]{\widehat{#1}}

\newcommand{\crosGAr}
{\mathbb{G} \,\mbox{$_r$}\hspace{-.2ex}\mbox{$\ltimes$} \, A}

\newcommand{\crosGAF}
{\mathbb{G} \,\mbox{$_\mathcal{F}$}\hspace{-.2ex}\mbox{$\ltimes$} \, A}

\newcommand{\crosGGHr}
{\mathbb{G} \,\mbox{$_r$}\hspace{-.2ex}\mbox{$\ltimes$} \, {\B_{\Gamma_{\G/\h}}}}

\newcommand{\crosGGHF}
{\mathbb{G} \,\mbox{$_\mathcal{F}$}\hspace{-.2ex}\mbox{$\ltimes$} \, {\B_{\Gamma_{\G/\h}}}}

\newcommand{\crosGFF}
{\mathbb{G} \,\mbox{$_\mathcal{F}$}\hspace{-.2ex}\mbox{$\ltimes$} \, C(\partial_F\G)}
\newcommand{\crosGFr}
{\mathbb{G} \,\mbox{$_r$}\hspace{-.2ex}\mbox{$\ltimes$} \, C(\partial_F\G)}
\newcommand{\crosGBF}
{\mathbb{G} \,\mbox{$_\mathcal{F}$}\hspace{-.2ex}\mbox{$\ltimes$} \, \B_{\Gamma_\X}}
\newcommand{\crosGBr}
{\mathbb{G} \,\mbox{$_r$}\hspace{-.2ex}\mbox{$\ltimes$} \, \B_{\Gamma_{\X}}}

\newcommand{\fb}{{\partial_F(\G)}}

\DeclareMathOperator{\Mor}{Mor}
\DeclareMathOperator{\Image}{Image}
\DeclareMathOperator{\Span}{Span}
\DeclareMathOperator{\Fix}{Fix}
\def\Rep{\mathrm{Rep}}
\def\Irr{\mathrm{Irr}}

\def\Fix{\mathrm{Fix}}

\allowdisplaybreaks

\title[]{Topological Boundaries of Representations and Coideals}

\author[B. Anderson-Sackaney]{Benjamin Anderson-Sackaney}
\address{Benjamin Anderson-Sackaney\\
Department of Mathematics and Statistics\\
College of Arts and Science\\
University of Saskatchewan\\
Saskatoon S7N 5A2\\
CANADA
}
\email{fho407@usask.ca}

\author[F. Khosravi]{Fatemeh Khosravi}
\address{Fatemeh Khosravi\\
Department of Pure Mathematics\\
Faculty of Mathematics and Statistics\\
University of Isfahan\\
Isfahan 81746-73441\\
IRAN
}
\email{f.khosravi@mcs.ui.ac.ir}
\address{Fatemeh Khosravi\\
School of Mathematics\\
Institute for Research in Fundamental Sciences (IPM)\\
P. O. Box 19395-5746\\
Tehran\\
IRAN}
\email{f.khosravi@ipm.ir}

\subjclass[2020]{Primary 46L67, 46L55; Secondary 46L05, 46M10}
\keywords{discrete quantum group, coideal, compact quasi-subgroups, Fustenberg-Hamana boundary}

\thanks{{\ }\\[-3ex]B.A-S. was supported in part by the ANR (project ANR-19-CE40-0002), and in part by PIMS and the Simons Foundation (Simons grant PPTW GR023618).
\\ F.K. was  supported in part by a grant from IPM (No. 1402460318) and in part by Samsung Science and Technology Foundation under Project Number SSTF-BA2002-01 and the National Research Foundation of Korea (NRF) grant funded by the government of
Korea (MSIT) (No. 2017R1E1A1A03070510 and 2020R1C1C1A01009681). }

\begin{document}
\maketitle
\begin{abstract}
For a locally compact quantum group $\G$, a (left) coideal is a (left) $\G$-invariant von Neumann subalgebra of $L^\infty(\G)$. We introduce and analyze  various generalizations  of amenability and coamenability to coideals of discrete and compact quantum groups. We focus  on a particular class of coideals found in the category of compact quantum groups, which are associated with a compact quasi-subgroup. This class includes all coideals of the quotient type. We also introduce the notion of a  Furstenberg-Hamana boundary for representations of discrete quantum groups and use it to  study amenability and coamenability properties of coideals.
We then prove that a coideal of a compact quantum group that is associated with a compact quasi-subgroup is coamenable if and only if its codual coideal is $\G$-injective. If $\G$ is a unimodular or an exact discrete quantum group, we can replace $\G$-injectivity in the latter statement with the weaker condition of relative amenability. This result leads  to a complete characterization of the unique trace property. Specifically, a unimodular discrete quantum group $\G$ has the unique trace property if and only if the action of $\G$ on its noncommutative Furstenberg boundary is faithful. We also demonstrated that if  a unimodular discrete quantum group $\G$ is \cst-simple then it has the unique trace property. These findings are the quantum analogs of the groundbreaking results of Breuillard, Kalantar, Kennedy, and Ozawa and they provide answers to questions posted by Kalantar, Kasprzak, Skalski, and Vergnioux.

\end{abstract}

\section{Introduction}\label{sec1}
The Furstenberg boundary, denoted $\partial_F G$, is the unique universal $G$-boundary of a discrete group $G$ discovered by Furstenberg \cite{MR0146298}. A long-standing open problem in operator algebras had been the conjecture that the reduced group \cst-algebra $C^*_r(G)$ is simple if and only if $C^*_r(G)$ has the unique trace property. In 2017, Kalantar and Kennedy \cite{MR3652252} established the first link between the conjecture and the action of $G$ on its Furstenberg boundary by showing that $C^*_r(G)$ is simple if and only if the action of $G$ on its Furstenberg boundary is topologically free. Later, Breuillard, Kalantar, Kennedy, and Ozawa proved one directions of the conjecture by characterizing the unique trace property of $C^*_r(G)$ with faithfulness of the action of $G$ on its Furstenberg boundary. However, an example in \cite{MR3660307} disproved the other direction.

Kalantar and Kennedy proved that if $G$ is a discrete group, then $C(\partial_F G)$ is the $G$-injective envelope of $\C$ (as defined by Hamana). This identification, which was originally noted by Hamana himself without proof \cite{MR0509025}*{Remark 4}, can be used to construct a version of the Furstenberg boundary for discrete quantum groups. In fact, the non-commutative version of the Furstenberg boundary, $C(\fb)$, for a discrete quantum group $\G$, is the $\G$-injective envelope of $\C$ \cite{MR4442841}. The authors in \cite{MR4442841} also defined faithfulness of actions and showed that if the action of a discrete quantum group $\G$ on its Furstenberg boundary is faithful, then the Haar state is the only possible KMS-state for the scaling automorphism group – which, in the unimodular case, is the same as the uniqueness of trace. However, the converse was left unsolved. Additionally, the relationship between the \cst-simplicity of a discrete quantum group and the uniqueness of trace on $C(\hh\G)$ remained mysterious. Since it is still unknown how to define (topologically) free actions in the quantum setting, it is impossible to study \cst-simplicity in terms of the classical theory. Nevertheless, in this paper, we answer these questions elegantly by studying coideals and Furstenberg-Hamana boundaries of representations of discrete quantum groups.

In this paper, we focus on discrete quantum groups and compact quantum groups. In Section \ref{sec2}, we define these concepts and also introduce the theory of locally compact quantum groups and coideals. We then proceed to discuss the (right) coideals of a compact quantum group $\hh\G$ in different settings such as algebraic, \cst-algebraic, and von Neumann algebraic settings, which we denote respectively by $\fO(\hh\X\backslash\hh\G)$, $C(\hh\X\backslash\hh\G)$, and $L^\infty(\hh\X\backslash\hh\G)$. We also define an idempotent functional associated with these coideals, $\omega_{\hh\X}$, and show how using the duality of coideals, we can construct  (left) coideals in $\G$, denoted by $\ell^\infty(\X)$. It is worth noting that the coideals $\fO(\hh\X\backslash\hh\G)$ and $\ell^\infty(\X)$ are naturally corresponding to one another. We also introduce the concept of a compact quasi-subgroup of a compact quantum group, which is the most important notion we discuss. We say $\hh\X$ is a compact quasi-subgroup of $\hh\G$ if the idempotent functional $\omega_{\hh\X}$ associated with the (right) coideal $\fO(\hh\X\backslash\hh\G)$ is positive.

 By assuming that $\hh\X$ is a compact quasi-subgroup, we prove that coamenability of a coideal $\hh\X\backslash\hh\G$ in $\hh\G$ implies $\G$-injectivity of $\ell^\infty(\X)$, which in turn implies nuclearity of $C(\hh\X\backslash\hh\G)$. This result generalizes the fact that coamenability of $\hh\G$ implies amenability of $\G$ and nuclearity of $C(\hh\G)$.

In Section \ref{sec3}, we generalize some known results about amenability (coamenability) for discrete (compact) quantum groups to coideals. By assuming that $\hh\X$ is a compact quasi-subgroup, we  prove that coamenability of a coideal $\hh\X\backslash\hh\G$ in $\hh\G$ implies $\G$-injectivity of $\ell^\infty(\X)$, which in turn implies nuclearity of $C(\hh\X\backslash\hh\G)$. This result generalizes the fact that coamenability of $\hh\G$ implies amenability of $\G$ and nuclearity of $C(\hh\G)$ \cite{BT}.

In Section \ref{sec4}, we discuss the construction of Furstenberg-Hamana boundaries for representations of discrete quantum groups. This can be seen as a non-commutative version of the work carried out by Bearden and Kalantar, who studied representations of discrete groups \cite{MR4276323}. While constructing the Furstenberg-Hamana boundary for a representation based on \cite{MR4276323} may not be particularly surprising, the use of this notion of boundaries of representations to achieve the main objective of this paper is  noteworthy.

In Section \ref{sec5}, we delve into the topic of $\hh\G$-\cst-injectivity ($\hh\G$-$W^*$-injectivity) of $\hh\G$-\cst-operator systems ($\hh\G$-$W^*$-operator systems) for a compact quantum group $\hh\G$. A crucial finding in this section is that if the coideal $\ell^\infty(\X)$ of a compact quasi-subgroup $\hh\X$ of $\hh\G$ is $\G$-injective then $L^\infty(\hh\X\backslash\hh\G)$ is $\hh\G$-$W^*$-injective, which ultimately guarantees the injectivity of $L^\infty(\hh\X\backslash\hh\G)$.

In Section \ref{sec6}, we present a proof of a theorem that generalizes a well-known result about the equivalence of amenability of a discrete quantum group $\G$ with coamenability of its dual quantum group $\hh\G$ \cites{MR3693148,MR2276175} to coideals associated with compact-quasi subgroups. Specifically, we show that
\begin{theorem*}
    If $\hh\X$ is a compact quasi-subgroup of a compact quantum group $\hh\G$, then the coideal $\ell^\infty(\X)$ is $\G$-injective if and only if its codual coideal $\hh\X\backslash\hh\G$ is coamenable.
\end{theorem*}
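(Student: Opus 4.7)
The forward implication---that coamenability of $\hh\X\backslash\hh\G$ implies $\G$-injectivity of $\ell^\infty(\X)$---is already established in Section \ref{sec3}, so the plan targets the converse: assume $\ell^\infty(\X)$ is $\G$-injective and deduce that $\hh\X\backslash\hh\G$ is coamenable. The archetype is the classical theorem equating amenability of $\G$ with coamenability of $\hh\G$, which is recovered here by specializing to $\hh\X = \C$. What makes the coideal generalization possible is the positivity of the idempotent functional $\omega_{\hh\X}$, i.e.\ the compact quasi-subgroup hypothesis, which allows one to move between $\ell^\infty(\X) \subset \ell^\infty(\G)$ and $C(\hh\X\backslash\hh\G) \subset C(\hh\G)$ in a way that respects positivity.

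The first step would be to extract from $\G$-injectivity a $\G$-equivariant unital completely positive projection $E \colon \ell^\infty(\G) \to \ell^\infty(\X)$, a relative invariant mean adapted to the coideal. Composing $E$ with $\omega_{\hh\X}$ and transporting via the duality between coideals in $\G$ and $\hh\G$ should yield an abstract state on $C(\hh\X\backslash\hh\G)$ extending the counit and invariant under the induced coaction. In parallel, the Furstenberg--Hamana boundary of representations developed in Section \ref{sec4}, applied to the GNS representation attached to $\omega_{\hh\X}$, provides the conceptual framework: I expect $\G$-injectivity of $\ell^\infty(\X)$ to force this representation boundary to collapse to $\C$, encoding the rigidity that will be needed to pin down the invariant state.

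The main obstacle is the final step: upgrading this abstract invariant state to a bounded approximate identity of normal states in $L^1(\hh\X\backslash\hh\G)$, which is the working definition of coamenability. This is a Day/Namioka-type convex averaging in the predual, and its success will hinge on the injectivity results of Section \ref{sec5}: $\G$-injectivity of $\ell^\infty(\X)$ propagates to $\hh\G$-$W^*$-injectivity of $L^\infty(\hh\X\backslash\hh\G)$, which furnishes the normal conditional expectations needed to weak$^*$-approximate the invariant state by normal states. The positivity of $\omega_{\hh\X}$ should then guarantee that the resulting Day averages remain compatible with the coideal structure, delivering the required bounded approximate identity and completing the argument. The principal technical difficulty, as in the classical proof, is controlling the modular/scaling data during this averaging; the compact quasi-subgroup hypothesis is what I would lean on to bypass this difficulty uniformly in the coideal setting.
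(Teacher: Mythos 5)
The forward direction is indeed Theorem \ref{Coamenability Implies Amenability: CQSs}, but your sketch of the converse has a genuine gap at its center. The step ``composing $E$ with $\omega_{\hh\X}$ and transporting via the duality \ldots should yield a state on $C(\hh\X\backslash\hh\G)$ extending the counit'' is not an argument: producing a state on the \emph{reduced} algebra $C(\hh\X\backslash\hh\G)$ extending $\varepsilon_{\hh\G}|_{\fO(\hh\X\backslash\hh\G)}$ is, by the paper's definition, precisely the assertion that $\hh\X\backslash\hh\G$ is coamenable, so this step assumes the conclusion. Moreover the composition does not typecheck: $E$ maps $\ell^\infty(\G)$ into $\ell^\infty(\X)$ on the discrete side, while $\omega_{\hh\X}$ lives on $C^u(\hh\G)$, and no mechanism is given for transporting boundedness across the duality to the \emph{reduced} norm --- which is the entire difficulty, since the counit is automatically bounded on $C^u(\hh\X\backslash\hh\G)$. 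Your announced ``main obstacle'' --- upgrading a state to a bounded approximate identity of normal states in $L^1$ --- is not the paper's working definition of coamenability for coideals, and would in any case be superfluous once the reduced counit exists.

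What actually closes the converse in the paper (Theorem \ref{coam}) is a Crann-style argument with two load-bearing ingredients absent from your plan: (i) nuclearity of $C(\hh\X\backslash\hh\G)$, which follows from $\G$-injectivity by Theorem \ref{GInjective Implies Nuclear} and supplies a point-norm approximation of the identity by finite-rank u.c.p.\ maps $\Psi_a$; and (ii) a representation theorem for $\hh\G$-equivariant completely bounded maps $L^\infty(\hh\G)\to L^\infty(\hh\X\backslash\hh\G)$ (Proposition \ref{CB=CP} and Corollary \ref{CBandfunctional}, resting on the Junge--Neufang--Ruan theorem), which converts each rank-one piece of $\Phi\circ(\Psi_a\otimes\id)\circ\alpha$ --- where $\Phi$ is the equivariant left inverse of the coaction furnished by $\hh\G$-$W^*$-injectivity via Theorem \ref{Relative Amenability and Injectivity Theorem} --- into convolution against a norm-one functional $\mu_a\in C(\hh\X\backslash\hh\G)^*$. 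A weak$^*$ cluster point of $(\mu_a)$ is then shown to implement the reduced counit. You correctly sense that $\hh\G$-$W^*$-injectivity of $L^\infty(\hh\X\backslash\hh\G)$ enters, but its role is to provide the left inverse $\Phi$, not normal conditional expectations for a Day--Namioka averaging; without (i) and (ii) there is no route from the equivariant projection $E$ to a functional that is bounded for the reduced norm.
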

Under the mild assumptions of unimodularity or exactness of a discrete quantum group $\G$, we can replace $\G$-injectivity with the weaker condition of relative amenability of $\ell^\infty(\X)$ in the latter theorem and still obtain coamenability of $\hh\X\backslash\hh\G$. This result has an important application, which is the main goal of this paper. Specifically,  if $\G$ is a unimodular discrete quantum group, we can complete the characterization of the unique trace property along with \cite{MR4442841}*{Theorem 5.3}:
\begin{theorem*}
    Let $\G$ be a unimodular discrete quantum group. Then $\G$ has the unique trace property iff the action of $\G$ on its Furstenberg boundary is faithful.
\end{theorem*}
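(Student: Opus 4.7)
The forward direction ``faithful action $\Rightarrow$ unique trace'' is already contained in \cite{MR4442841}*{Theorem 5.3}, so only the converse requires proof. I argue the contrapositive: assuming that the canonical coaction $\alpha:C(\partial_F\G)\to \ell^\infty(\G)\vtp C(\partial_F\G)$ fails to be faithful, I produce a tracial state on the reduced C*-algebra of $\hh\G$ distinct from the Haar state, contradicting the unique trace property.

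The core of the argument is a bridge from non-faithfulness of the boundary action to the existence of a nontrivial compact quasi-subgroup $\hh\X\neq \hh\G$. The natural candidate for $\ell^\infty(\X)$ is the proper $\G$-invariant coideal of $\ell^\infty(\G)$ through which $\alpha$ factors (equivalently, the subalgebra generated by the slice maps of $\alpha$). Because $\ell^\infty(\X)$ then admits a $\G$-equivariant embedding into the $\G$-injective envelope $C(\partial_F\G)$ and a $\G$-equivariant conditional expectation provided by $\G$-injectivity of the envelope, the results of Section \ref{sec5} give that $\ell^\infty(\X)$ is itself $\G$-injective; the same conditional expectation furnishes the positive idempotent functional $\omega_{\hh\X}$ required to certify $\hh\X$ as a genuine compact quasi-subgroup in the sense of Section \ref{sec2}.

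With $\hh\X$ established as a nontrivial compact quasi-subgroup and $\ell^\infty(\X)$ known to be $\G$-injective, the main theorem of Section \ref{sec6} (whose unimodular refinement even allows the weaker hypothesis of relative amenability) forces the codual coideal $\hh\X\backslash\hh\G$ to be coamenable, producing a character $\epsilon_{\hh\X}:C(\hh\X\backslash\hh\G)\to\C$. Composing with the canonical $\hh\G$-equivariant conditional expectation $E_{\hh\X}=(\omega_{\hh\X}\ot\id)\hh{\Delta}:C(\hh\G)\to C(\hh\X\backslash\hh\G)$ yields a state $\tau_{\hh\X}:=\epsilon_{\hh\X}\circ E_{\hh\X}$ on $C(\hh\G)$. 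Unimodularity makes the Haar state tracial, and a direct calculation using that $\omega_{\hh\X}$ is a positive idempotent and $\epsilon_{\hh\X}$ is multiplicative shows $\tau_{\hh\X}$ is also tracial; since $\hh\X$ is nontrivial, $C(\hh\X\backslash\hh\G)\supsetneq\C$ and $\tau_{\hh\X}$ cannot agree with the Haar state, delivering the desired non-Haar trace.

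The principal obstacle is the opening step: \emph{extracting} a nontrivial compact quasi-subgroup from mere failure of faithfulness. One has to simultaneously guarantee (i) that $\ell^\infty(\X)\subsetneq \ell^\infty(\G)$, i.e.\ that the putative coideal is strictly proper, and (ii) positivity of the associated idempotent $\omega_{\hh\X}$ on $\MU$, without which Section \ref{sec6} does not apply. Non-triviality should be read directly off the definition of faithfulness adopted in \cite{MR4442841}, while positivity is where the $\G$-injective envelope character of $C(\partial_F\G)$ and the injectivity-preservation machinery of Section \ref{sec5} become essential. Once this translation between dynamical non-faithfulness and the compact quasi-subgroup formalism is firmly in place, the remainder of the argument amounts to traciality bookkeeping for the state $\tau_{\hh\X}$ under unimodularity.
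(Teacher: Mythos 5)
Your overall architecture matches the paper's: one direction is quoted from \cite{MR4442841}*{Theorem 5.3}, and the converse is obtained by extracting from a non-faithful boundary action a proper ``subgroup'' whose idempotent state extends to a second trace on $C(\hh\G)$. The problem is that the step you yourself identify as ``the principal obstacle'' --- certifying that the coideal extracted from non-faithfulness is a compact quasi-subgroup with a \emph{positive} idempotent $\omega_{\hh\X}$ --- is left unresolved, and the mechanism you gesture at (a conditional expectation coming from $\G$-injectivity of the envelope ``furnishes'' positivity) is not an argument. The paper does not need any of this: the cokernel $N_{\alpha_F}$ of the boundary action is a Baaj--Vaes subalgebra of $\ell^\infty(\G)$ (\cite{MR4442841}*{Proposition 2.9}, recalled in Section~\ref{2.4}), hence equals $\ell^\infty(\G_F)$ for a genuine closed quantum subgroup $\hh{\G}_F\leq\hh\G$; the quotient-type coideal $\fO(\hh{\G}_F\backslash\hh\G)$ is then automatically a compact quasi-subgroup with $\omega_{\hh{\G}_F}=h_{\hh{\G}_F}\circ\Pi$ manifestly positive. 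Working with the honest subgroup $\hh{\G}_F$ also repairs your traciality step: idempotency of $\omega_{\hh\X}$ plus multiplicativity of the extended counit does \emph{not} by itself give traciality; the paper's reason is that $\hh{\G}_F$, being a closed quantum subgroup of the Kac-type $\hh\G$, is itself Kac type, so $h_{\hh{\G}_F}$ is a trace and hence so is $\omega_{\hh{\G}_F}$.

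A second inaccuracy is directional: you claim a $\G$-equivariant embedding of $\ell^\infty(\X)$ \emph{into} $C(\partial_F\G)$ and deduce $\G$-injectivity from it. The maps go the other way: by definition of the cokernel every Poisson transform sends $C(\partial_F\G)$ into $\ell^\infty(\G_F)$, which is exactly condition (3) of Lemma~\ref{Relative Amenability Lemma} and gives relative amenability of $\ell^\infty(\G_F)$ (equivalently, \cite{MR4442841}*{Theorem 5.1}); unimodularity then upgrades this to $\G$-injectivity (Theorem~\ref{Relative Amenability Implies GInjectivity}) and hence to coamenability of $\hh{\G}_F\backslash\hh\G$ (Theorem~\ref{coam}, packaged as Corollary~\ref{wcFb}). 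Once these two points are fixed, the rest of your argument is the paper's proof: coamenability places the tracial state $\omega_{\hh{\G}_F}$ in $C(\hh\G)^*$, the unique trace property forces $\omega_{\hh{\G}_F}=h_{\hh\G}$, and this happens only when $\hh{\G}_F=\hh\G$, i.e., when the action is faithful.
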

The latter theorem leads to a generalization of a famous result in \cite{MR3735864} for unimodular discrete quantum groups.
\begin{theorem*}
    Let $\G$ be a unimodular discrete quantum group. If $\G$ is \cst-simple then $\G$ has the unique trace property.
\end{theorem*}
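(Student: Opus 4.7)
The plan is to invoke the preceding theorem, which characterizes the unique trace property of a unimodular discrete quantum group $\G$ by faithfulness of the action $\G \act \partial_F \G$. It therefore suffices to show that $C^*$-simplicity of $\G$ forces this action to be faithful, and we argue by contrapositive.

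Assume the action of $\G$ on $\partial_F \G$ is not faithful. The plan is to extract from this failure a nontrivial compact quasi-subgroup $\hh\X$ of $\hh\G$ whose codual coideal $\ell^\infty(\X) \subsetneq \ell^\infty(\G)$ captures the ``kernel'' of the action, in the sense that the $\G$-action on $C(\partial_F \G)$ factors through $\ell^\infty(\X)$. Combining this factorization with the $\G$-injectivity of $C(\partial_F \G)$ (its defining property, recalled in Section \ref{sec4}) and the dictionary between coideals and compact quasi-subgroups from Section \ref{sec2}, one obtains a $\G$-equivariant u.c.p.\ map witnessing the \emph{relative amenability} of $\ell^\infty(\X)$.

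Next, apply the main theorem of the paper in its unimodular form: for a unimodular discrete quantum group $\G$, relative amenability of $\ell^\infty(\X)$ implies coamenability of the codual coideal $\hh\X\backslash\hh\G$. Since $\hh\X$ is nontrivial, this coamenability produces a nontrivial $\G$-equivariant conditional expectation (or extension of the counit) onto $C(\hh\X\backslash\hh\G)$ inside $C_r(\hh\G) = C^*_r(\G)$. The kernel of this quotient is then a proper nontrivial closed two-sided ideal of $C^*_r(\G)$, contradicting $C^*$-simplicity of $\G$ and completing the proof.

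The principal obstacle is the extraction step: realizing the ``kernel'' of a non-faithful $\G$-action on $C(\partial_F \G)$ as a genuine compact quasi-subgroup $\hh\X$ of $\hh\G$ and then establishing relative amenability of $\ell^\infty(\X)$. This relies delicately on the universal property of $\partial_F \G$ as the $\G$-injective envelope of $\C$ (Section \ref{sec4}) together with the coideal/quasi-subgroup correspondence of Section \ref{sec2}. A subordinate technical point is showing that coamenability of the nontrivial coideal $\hh\X\backslash\hh\G$ actually yields a proper nontrivial ideal in $C_r(\hh\G)$; this should follow from the $\G$-equivariant conditional expectation provided by coamenability combined with the fact that $\hh\X$ is a proper compact quasi-subgroup. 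The unimodularity hypothesis is used only to access the stronger form of the main theorem in which relative amenability, rather than full $\G$-injectivity, is enough.
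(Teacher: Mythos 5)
Your skeleton is the same as the paper's: reduce to showing that $C^*$-simplicity forces the action $\G\act C(\fb)$ to be faithful (then quote \cite{MR4442841}*{Theorem 5.3} for ``faithful $\Rightarrow$ unique trace'' in the unimodular case), and get faithfulness by running the cokernel of the boundary action through the relative-amenability-implies-coamenability machinery. Two remarks on the pieces you flag as obstacles. First, the ``extraction step'' is not actually delicate: the cokernel $N_{\alpha_F}=\ell^\infty(\G_F)$ of the boundary action is a Baaj--Vaes subalgebra (\cite{MR4442841}*{Proposition 2.9}), so $\hh\G_F$ is an honest closed quantum subgroup of $\hh\G$ --- in particular the coideal is of quotient type, hence a compact quasi-subgroup --- and its relative amenability is exactly \cite{MR4442841}*{Theorem 5.1}. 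There is no need to manufacture a quasi-subgroup from scratch. Non-faithfulness means precisely $\ell^\infty(\G_F)\subsetneq\ell^\infty(\G)$.

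The genuine gap is in your final step. A ($\hh\G$-equivariant) conditional expectation onto $C(\hh\G_F\backslash\hh\G)$, or an extension of the counit to that subalgebra, is a u.c.p.\ map, not a $*$-homomorphism of $C(\hh\G)$; its kernel is not a two-sided ideal, so it cannot directly contradict simplicity. What coamenability of $\hh\G_F\backslash\hh\G$ actually delivers (this is the content of Corollary \ref{wcFb}) is the weak containment $\Gamma_{\G_F}=\lambda_{\G_F}\circ\Pi\prec\lambda_\G$, i.e.\ a \emph{surjective $*$-homomorphism} $C(\hh\G)\to C^{\Gamma_{\G_F}}(\hh\G)=C(\hh\G_F)$; that is the map whose kernel is an ideal. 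Simplicity forces this kernel to be zero, so $C(\hh\G)\cong C(\hh\G_F)$ as reduced compact quantum groups, and you must then argue that this isomorphism forces $\G_F=\G$ (for instance: injectivity of $\lambda_{\G_F}\circ\Pi$ on $C(\hh\G)$ makes $h_{\hh\G_F}\circ\Pi$ a bi-invariant state on $\fO(\hh\G)$, hence equal to $h_{\hh\G}$, whose faithfulness on $\fO(\hh\G)$ forces $\Pi$ to be injective, so $\hh\G_F=\hh\G$). Equivalently, in your contrapositive formulation, the nontriviality of the ideal when $\G_F\neq\G$ is exactly this last argument and does not ``follow from the conditional expectation''; as written, that step is the missing piece.
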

Finally, we provide examples of discrete quantum groups that satisfy both \cst-simplicity and exactness. Our findings indicate that these quantum groups act faithfully on their Furstenberg boundaries, and in unimodular cases, they possess the unique trace property.
These examples include the duals of certain quantum automorphism groups, free unitary and free orthogonal quantum groups. They are well-known in the literature and have attracted considerable attention for the study of their operator algebraic properties. Prior to our work, the faithfulness of the actions of some of these examples on their Furstenberg boundaries, as well as the unique trace property in unimodular cases, were investigated independently using various methods.

\section{Preliminaries}\label{sec2}
This section aims to establish definitions, fix notations, and provide some references. Throughout the paper, the flip morphism between tensor products of operator algebras will be denoted by the symbol $\sigma$. The state space of a \cst-algebra $A$ will be represented by $S(A)$. The set of non-degenerate $*$-homomorphisms from $A$ to the multiplier algebra of $B$, $M(B)$ will be denoted by $\mathrm{Mor}(A,B)$, where $A$ and $B$ are \cst-algebras.
Various tensor products, such as algebraic tensor products, min tensor products of \cst-algebras, and Hilbert space tensor products, will be represented by the symbol $\otimes$, with the exact meaning depending on the context. The von Neumann tensor product will be represented by $\vtp$. The commutant and predual of a von Neumann algebra $M$ will be represented by $M^\prime$ and $M_*$, respectively. We will sometimes consider an element $x\in M\vtp N$, where $M$ and $N$ are von Neumann algebras, and slice this element by a non-normal functional $\psi\in M^*$. Note that $y=(\psi\otimes \id)x$ is an element of $N$ such that for every normal functional $\phi\in  N_*$, $\phi(y)= \psi((\id\otimes\phi)x)$.
For an element $x\in A\otimes A$, we use the leg numbering notation $x_{12}=x\otimes 1,x_{13}=(\id\otimes \sigma)x_{12},x_{23}=1\otimes x $ as elements in $A\otimes A\otimes A$. We let $B(H)$ denote the set of all bounded operators on a Hilbert space $H$. For $\eta,\xi\in H$, the symbol $\omega_{\eta,\xi}\in B(H)_*$ is the functional $T\mapsto \langle T\eta ,\xi\rangle$, with $\omega_{\xi}:=\omega_{\xi,\xi}$ .

We briefly introduce operator systems and two tensor products on operator systems and refer the reader to \cites{MR1976867, MR1793753} for more details. Operator systems are unital self-adjoint subspaces of $B(H)$. The category of operator systems is equipped with unital completely positive (u.c.p.) maps as morphisms.  

Given operator systems $X\subseteq B(H)$ and $Y\subseteq B(K)$, the spatial tensor product is the norm closure of the algebraic tensor product $X\otimes Y$ in $B(H\otimes K)$ and will be denoted by $\otimes_{sp}$.
In the case where $X$ and  $Y$ are \cst-algebras, the spatial tensor product is exactly the min tensor product of \cst-algebras. The Fubini tensor product of operator systems is defined as follows:
\[
X\otimes_{\fF} Y = \{Z\in B(H\otimes K) : (\varphi\otimes\id)(Z) \in Y,~ (\id\otimes \psi)(Z)\in X, ~\varphi \in B(H)_*, \psi\in B(K)_*\}.
\]
When $X$ and $Y$ are von Neumann algebras, the Fubini tensor product is exactly the von Neumann tensor product.

\subsection{Discrete and compact quantum groups}
Our principal references for the fundamentals of the theory of quantum groups are \cites{MR1832993, MR1951446, Ne, MR1616348}. A von Neumann algebraic locally compact quantum group is a quadruple $\mathbb{G} = (\LL, \Delta_\G,\varphi_\G,\psi_\G)$, where $\LL$ is a von Neumann algebra with a coassociative comultiplication $\Delta_\G\colon\LL\to\LL\vtp\LL$, and $\varphi_\G$ and $\psi_\G$ are, respectively, normal semi-finite faithful (n.s.f.) left and right Haar weights on $\LL$.
The GNS Hilbert space of the left Haar weight $\varphi_\G$ will be denoted by  $\LT$ and we will write $\LO$ for the predual of the von Neumann algebra $\LL$. The antipode, the scaling group and the
unitary antipode of a quantum group $\G$ will be denoted by $S_\G$, $(\tau_t^\G)_{t\in \mathbb{R}}$ and $R_\G$. 

We denote by $\ww_\G,\vv_\G\in B(\LT\otimes \LT)$ the left and right  multiplicative unitaries of $\G$, respectively which  implement the comultiplication: $\Delta_\G(x)=\vv_\G(x\otimes 1)\vv^*_{\G}=\ww^*_{\G}(1\otimes x)\ww_\G$ for $x\in\LL$. Moreover,
$\LL =\bigl\{ (\omega\otimes\id)\vv_\G\st\omega\in B(\LT)_*\bigr\} ^{\prime\prime}=\bigl\{ (\id\otimes\omega)\ww_\G\st\omega\in B(\LT)_*\bigr\}^{\prime\prime}$. For a locally compact quantum group $\G$, $C_0(\G)$ and $C_0^u(\G)$ will denote the reduced and universal \cst-algebras associated with $\G$. Since $C_0(\G) \subseteq L^\infty(\G)$ we will use $\Delta_\G$ to also denote the comultiplication on $C_0(\G)$, so we have $\Delta_\G\in \Mor(C_0(\G), C_0(\G) \otimes C_0(\G))$, while the comultiplication on $C^u_0(\G)$ will be denoted by $\Delta_{\G}^u$.
There is a canonical surjective reducing morphism $\Lambda_\G:C_0^u(\G)\to C_0(\G)$, intertwining the
respective comultiplications.
The counit $\varepsilon_\G:C_0^u(\G)\to \C$ is  a $*$-homomorphism that satisfies $(\id\otimes\varepsilon_\G)\Delta^u_\G=\id= (\varepsilon_\G\otimes\id)\Delta^u_\G$. A locally compact quantum group is called {\it coamenable} if the reduced version of counit exists which is equivalent to $\Lambda_\G$ being injective.

The dual of a locally compact quantum group $\G$ will be denoted by $\hh\G$. We have
$L^\infty(\hh\G)=\bigl\{( \id\otimes\omega)\ww_{\hh\G}\st\omega\in B(\LT)_*\bigr\}^{\prime\prime}$, $\Delta_{\hh\G}(x)=\ww_{\hh\G}^*(1\otimes  x){\ww_{\hh\G}}$ for $ x\in\LLL$,
where $\ww_{\hh\G}:=\sigma({\ww_{\G}})^*$. We have $\ww_\G\in  \LL\vtp L^\infty(\hh\G)$ and $\vv_\G\in L^\infty(\hh\G)^\prime\vtp\LL$. The half-lifted and universal versions of multiplicative unitaries exist. In particular, for the left multiplicative unitary $\ww_\G$, we have $\WW_\G\in M(C_0^u(\G)\otimes C_0^u(\hh\G))$, $\wW_\G\in M(C_0(\G)\otimes C_0^u(\hh\G))$, and $\Ww_\G\in M(C^u_0(\G)\otimes C_0(\hh\G))$ such that  
\[
\ww_\G= (\Lambda_\G\otimes \Lambda_{\hh\G})\WW_\G= (\id\otimes \Lambda_{\hh\G})\wW_\G= (\Lambda_\G\otimes \id)\Ww_\G.
\]
A locally compact quantum group $\G$ is \emph{compact} if the Haar weights are finite and is \emph{discrete} if its dual $\hh\G$ is compact. In this paper, $\G$ will usually denote a discrete quantum group and $\hh\G$ will denote a compact quantum group. When $\hh\G$ is compact, $\varphi_{\hh\G} = \psi_{\hh\G} = h_{\hh\G}$.
A compact quantum group $\hh\G$ is \emph{Kac type} if $h_{\hh\G}$ is a trace. It is known that  a compact quantum group $\hh\G$ is Kac type iff its dual discrete quantum group $\G$ is unimodular, which means the left and right Haar weights can be scaled to be equal.

Let $\hh\G$ be a compact quantum group.
We denote the set of all equivalent classes of finite-dimensional unitary corepresentations  of $\hh\G$ by $\Rep(\hh\G)$.
The linear span of matrix coefficients of all finite-dimensional corepresentations of $\hh\G$, is a dense unital $*$-subalgebras of both $C(\hh\G)$ and $C^u(\hh\G)$, denoted by $\fO(\hh\G)$,
 \[\fO(\hh\G) =\{u_{\xi,\eta}:= (id \otimes \omega_{\eta,\xi})(u) : u\in \Rep(\hh\G), \xi,\eta\in H_u\}.\]
 Moreover, $\fO(\hh\G)$ is a Hopf $*$-algebra with the restrictions of the comultiplication $\Delta_{\hh\G}$, the  Haar state $h_{\hh\G}$, the antipode $S_{\hh\G}$, and the counit $\varepsilon_{\hh\G}$, with $\fO(\hh\G)'' =L^\infty(\hh\G)\subseteq B(L^2(\G))$.
 
 We also denote the set of all equivalence classes of  irreducible unitary corepresentations by $\Irr(\hh\G)$.
We let $c_{00}(\G) := \oplus_{u\in \Irr(\hh\G)} B(H_u)$ and $c_0(\G) = \overline{c_{00}(\G)}$. Then $\ell^\infty(\G) := c_{00}(\G)''$ is the locally compact quantum group associated with $\G$ in the sense of Kustermans and Vaes \cite{MR1832993}. A discrete quantum group $\G$ is always coamenable with a normal counit, $\varepsilon_\G\in \ell^1(\G)$.

We set
\[
     \ell^1_F(\G) := \oplus_{u\in \Irr(\hh\G)} B(H_u)_*.
\]
The dual space of any coalgebra $(A,\Delta)$ is an algebra via convolution: $\mu*\nu = (\mu\otimes \nu)\Delta$. In particular, $\ell^1(\G)$ and $\fO(\hh\G)^*$ are algebras with convolution. Then, the maps
$$\{h_{\hh\G}(a\cdot) : a\in \fO(\hh\G)\}\to c_{00}(\G), \mu \mapsto  (\mu\otimes\id)(\ww_{\hh\G})$$
and
$$\ell^1_F(\G) \to \fO(\hh\G), ~ f\mapsto (f\otimes\id)(\ww_\G).$$
are algebra isomorphisms. We will freely use the algebra isomorphism $\ell^1_F(\G)\cong \fO(\hh\G)$.

\vspace{.2cm}

Let $\G$ be a discrete quantum group. A $\G$-representation is a unital $*$-representation $\pi : \fO(\hh\G)\to B(H_\pi)$. We set $C^\pi(\hh\G) = \overline{\pi(\fO(\hh\G))}\subseteq B(H_\pi)$.
In particular, the reduced and universal compact quantum groups  \cst-algebras are respectively, $C(\hh\G):=C^{\lambda_\G}(\hh\G)$ and $C^u(\hh\G):=C^{\varpi}(\hh\G)$, where $\lambda_\G : \fO(\hh\G)\to B(\ell^2(\G))$ and $\varpi : \fO(\hh\G) \to B(H_\varpi)$ are respectively the left regular representation and the universal representation of $\G$.
Using the universal property of $\varpi$, every $\G$-representation $\pi$ extends continuously to a unital $*$-representation $\pi : C^u(\hh\G)\to B(H_\pi)$ such that $\varpi(a)\mapsto \pi(a)$, $a\in \fO(\hh\G)$.
Recall that the reducing morphism  $\Lambda_{\hh\G}:C^u(\hh\G)\to C(\hh\G)$ is the extension of the left regular representation,  $\lambda_{\G}$.
It is known that there is a bijection between $\G$-representations $\pi$ and unitary corepresentations, unitary operators $U_\pi\in \ell^\infty(\G)\overline{\otimes}B(H_\pi)$ such that $(\Delta_\G\otimes\id)(U_\pi) = (U_\pi)_{13}(U_\pi)_{23}$, where  $U_\pi = (\id\otimes \pi)(\wW_\G)$.

\vspace{.2cm}

There are two definitions of closed quantum subgroups of a locally compact quantum group given by Woronowicz and Vaes.  It was shown with \cite{MR2980506}*{Theorem 3.5, Theorem 6.1, Theorem 6.2} that for compact and discrete quantum groups, these two definitions are equivalent.
\begin{definition}\cite{MR2980506}\label{Vcqg}
    Let $\G$ and $\h$ be locally compact quantum groups. We say that $\h$ is a closed quantum subgroup of $\G$,
    \begin{itemize}
        \item (Woronowicz) if there is a morphism $\pi\in \Mor(C_0^u(\G), C_0^u(\h))$ that intertwines the comultiplications, i.e. $(\pi\otimes\pi)\circ\Delta^u_\G=\Delta^u_\h\circ \pi$, and $\pi(C_0^u(\G))=C_0^u(\h)$.
        \item (Vaes) if there is an injective unital normal $*$-homomorphism $\gamma: L^\infty(\hh\h)\to L^\infty(\hh\G)$ that intertwines the comultiplication, i.e. $(\gamma\otimes\gamma)\circ\Delta_{\hh\h} = \Delta_{\hh\G}\circ\gamma.$
    \end{itemize}
\end{definition}
Let $\G$ be a locally compact quantum group. A (left) action of $\G$ on a 
\cst-algebra $A$ is an injective morphism $\alpha: A\to M(C_0(\G)\otimes A)$ such that $(\Delta_\G\otimes \id)\circ \alpha = (\id\otimes\alpha)\circ \alpha$. The action $\alpha$ is said to be continuous if $\alpha(A)(C_0(\G)\otimes 1) = C_0(\G) \otimes A$. It turns out that any action of a discrete quantum group $\G$ on a unital \cst-algebra $A$ is
automatically continuous (see e.g. the last part of the proof of \cite{MR4442841}*{ Theorem 4.9}).

A (left) action of a locally compact quantum group $\G$ on a von Neumann algebra $M$ is a unital injective $*$-homomorphism $\alpha: M\to L^\infty(\G)\vtp M$ such that $(\Delta_\G\otimes \id)\circ \alpha = (\id\otimes\alpha)\circ \alpha$. The fixed point algebra of the action $\alpha$ is a von Neumann subalgebra of $M$ defined by $\Fix(\alpha)=\{x\in M \st \alpha(x)=1\otimes x\}.$ 

Let $\h$ be a (Vaes) closed quantum subgroup of $\G$ with associated morphism $\gamma$. Then there are \cst-algebraic and von Neumann algebraic (left) actions of $\h$  on $C_0(\G)$ and $L^\infty(\G)$, respectively given by the morphism $U_{\h}^* (\I\otimes \cdot) U_{\h}$,
where $U_{\h}= (\id\otimes\gamma)\ww_\h$. 
Assume  $\h$ is a quantum subgroup of a compact quantum group $\G$, the  fixed point algebras of these actions will be denoted  by $C(\h\backslash\G)$ and $L^\infty(\h\backslash\G)$, respectively. 

\subsection{Coideals of discrete and compact quantum groups} We introduce the notion of coideals in the von Neumann algebraic setting for locally compact quantum groups. Meanwhile, we are dealing with coideals of compact and discrete quantum groups.
\begin{definition}
Let $\G$ be a locally compact quantum group. A von Neumann subalgebra $N\subseteq L^\infty(\G)$ is called
\begin{itemize}
    \item {\it (Right) coideal} if $\Delta_\G(N)\subset N\vtp L^\infty(\G)$;
   \item {\it Integrable coideal} if $N$ is a (right) coideal and $\varphi_{\G}|_{N}$ is semifinite;     \item {\it Invariant subalgebra} if $\Delta_\G(N)\subseteq N\vtp N$;
    \item {\it Baaj-Vaes subalgebra} if $N$ is an invariant subalgebra of $L^\infty(\G)$ which is preserved by the unitary antipode $R_\G$ and the scaling group $(\tau_t^\G)_{t\in \R}$ of $\G$.
\end{itemize}
\end{definition}
There is a natural coduality between (right) coideals of $\G$ and $\hh\G$. In particular, if $N\subseteq L^\infty(\G)$ is a (right) coideal then its codual $\tilde{N}:= N^\prime \cap L^\infty(\hh\G)$ is a (right) coideal of $\hh\G$ and $\tilde{\tilde{N}}=N$. If we consider a closed quantum subgroup $\h$ of $\G$, then the fixed point algebra $L^\infty(\h\backslash\G)$ is a (right) coideal with $L^\infty(\hh\h)$ as its codual. 
 Note that a (right) coideal $N\subseteq L^\infty(\G)$  is called of {\it quotient type} if there exists a closed quantum subgroup $\h \leq \G$ such that $N=L^\infty(\h\backslash\G)$.

For discrete and compact quantum groups it was proved that every invariant subalgebra is a Baaj-Vaes subalgebra with the results in \cite{MR3291643}*{Theorem 3.1}. Moreover, by \cite{MR2115071}*{Proposition 10.5} and Definition \ref{Vcqg}, there is a one-to-one correspondence between closed quantum subgroups of $\G$ and Baaj-Vaes subalgebras of $L^\infty(\hh\G)$. 

\vspace{0.5cm}

The term ``coideal \cst-subalgebra of quantum groups" is not commonly used in the literature. However, for compact quantum groups $\hh\G$, since the \cst-algebras are unital, we will use this terminology.
\begin{definition}
    Let $\hh\G$ be a compact quantum group. A \cst-subalgebra $A\subseteq C(\hh\G)$ (respectively $A\subseteq C^u(\hh\G)$) is called (right) coideal \cst-subalgebra if 
    $\Delta_{\hh\G}(A)\subseteq A\otimes C(\hh\G)~(\text{respectively}~\Delta^u_{\hh\G}(A)\subseteq A\otimes C^u(\hh\G))$.
 \end{definition}
We briefly introduce algebraic coideals of the Hopf $*$-algebra $\fO(\hh\G)$ of a compact quantum group $\hh\G$ and refer the reader to \cite{MR3863479} for more details.

\begin{definition}
    A (right) {\it coideal subalgebra} of $\hh\G$ is a unital $*$-subalgebra $A\subseteq \fO(\hh\G)$ such that
    $\Delta_{\hh\G}(A)\subseteq A\otimes \fO(\hh\G)$.
\end{definition}

Let $\hh\h$ be a closed quantum subgroup of a compact quantum group $\hh\G$. Then, there exists a surjective  unital $*$-homomorphism $\pi_{\hh\h}:\fO(\hh\G)\to \fO(\hh\h)$ such that $(\pi_{\hh\h}\otimes \pi_{\hh\h})\Delta_{\hh\G}= \Delta_{\hh\h}\circ \pi_{\hh\h}$. Let us denote
\[
\fO(\hh\h\backslash\hh\G):=\{x\in \fO(\hh\G)\st (\pi_{\hh\h}\otimes \id)\Delta_{\hh\G}(x)=\pi_{\hh\h}(1)\otimes x \}. 
\]
Then $\fO(\hh\h\backslash\hh\G)$ is a (right) coideal subalgebra of $\hh\G$. Let $A$ be a (right) coideal subalgebra of $\hh\G$, we say $A$ is of {\it quotient type} if there exists a closed quantum subgroup $\hh\h\leq\hh\G$ such that $A = \fO(\hh\h\backslash\hh\G)$.
It is known a coideal subalgebras of $\hh\G$ is not necessarily of  quotient type  but it can be stated as a  quotient of a quantum space,  \cite{MR3863479}.  We briefly explain this fact by setting up the bijection between (right) coideal subalgebras of $\fO(\hh\G)$  and left $\fO(\hh\G)$-module $*$-coalgebra quotients of $\fO(\hh\G)$ as established in \cite[Theorem 3.2]{MR3863479}. 
Given a coideal subalgebra $A\subseteq \fO(\hh\G)$, set $C := \fO(\hh\G)/\fO(\hh\G)A_+$ where $A_+ = A\cap \ker(\varepsilon_{\hh\G})$. By \cite[Proposition 1]{Tak79} $C$ has a unique structure of a left $\fO(\hh\G)$-module coalgebra such that the projection map $\pi_C: \fO(\hh\G) \to C$ is a left $\fO(\hh\G)$-module coalgebra map. By \cite[Theorem 3.1]{MR3863479} $C$ is cosemisimple, so we can obtain
$A$ back from $C$ via
\[
A= \{ x \in \fO(\hh\G)\st (\pi_C\otimes \id)\Delta_{\hh\G}(x) = \pi_C(1)\otimes x\}.
\]
 In light of this discussion, when considering  a right coideal subalgebra $A$ of $\hh\G$, we will always invoke the ``quantum space'' notation $\hh\X$  and express $A = \fO(\hh\X\backslash\hh\G)$ where $\fO(\hh\X)$ represents the corresponding left $\fO(\hh\G)$-module $*$-coalgebra with the coproduct denoted by $\Delta_{\hh\X}$. Let $q_{\hh\X} : \fO(\hh\G)\to \fO(\hh\X)$ be the quotient map, which is a surjective linear map that satisfies $(q_{\hh\X}\otimes q_{\hh\X})\circ\Delta_{\hh\G} = \Delta_{\hh\X}\circ q_{\hh\X}$ (note that in terms of the notation above, we are setting $q_{\hh\X} := \pi_{\fO(\hh\X)}$). Then
$$\fO(\hh\X\backslash\hh\G) := \{a\in \fO(\hh\G) : (q_{\hh\X}\otimes\id)\Delta_{\hh\G}(a) = q_{\hh\X}(1)\otimes a\} = A.$$


Since $(\fO(\hh\X), \Delta_{\hh\X})$ is a coalgebra, the dual space $\fO(\hh\X)^*$ is an algebra via convolution: $\mu*\nu = (\mu\otimes\nu)\Delta_{\hh\X}$. Let $h_{\hh\X} \in \fO(\hh\X)^*$ be the self-adjoint support projection of the following representation,
\[
\fO(\hh\X)^*\to \C, \quad \mu\mapsto \mu(1_{\hh\X}),
\]
where $1_{\hh\X}= q_{\hh\X}(1)$. Then, the functional  $h_{\hh\X}: \fO(\hh\X)\to \C$ (not necessarily positive) has the invariance property, \cite{DCDT21}
\[
(h_{\hh\X}\otimes\id)\Delta_{\hh\X} = 1_{\hh\X} h_{\hh\X}  = (\id\otimes h_{\hh\X})\Delta_{\hh\X}.
\]

Moreover,  $\omega_{\hh\X}:= h_{\hh\X}\circ q_{\hh\X} \in \fO(\hh\G)^*$ is a unital idempotent functional which is not necessarily positive.

Note that for any coideal subalgebra $\fO(\hh\X\backslash\hh\G)\subseteq \fO(\hh\G)$, we can construct operator (\cst  or von Neumann) algebraic coideals as follows. Let us denote
\begin{itemize}
\item $\fO(\hh\X\backslash\hh\G)''$ by $L^\infty(\hh\X\backslash\hh\G)$ which is a coideal von Neumann subalgebra of $L^\infty(\hh\G)$;
    \item the norm closure of $\fO(\hh\X\backslash\hh\G)$  in $C(\hh\G)$ by $C(\hh\X\backslash\hh\G)$ which is a coideal \cst-subalgebra of $C(\hh\G)$;
    \item universal \cst-norm closure of $\fO(\hh\X\backslash\hh\G)$  in $C^u(\hh\G)$ by $C^u(\hh\X\backslash\hh\G)$ which is a coideal \cst-subalgebra of $C^u(\hh\G)$.
\end{itemize}
On the other hand, having an operator (\cst- or von Neumann) algebraic coideal $A$ of $\hh\G$, $A\cap \fO(\hh\G)$ is an algebraic coideal (coideal subalgebra) of $\hh\G$.  This is a one-to-one correspondence between algebraic and operator algebraic coideals of $\hh\G$.  Therefore, for a compact quantum group $\hh\G$, with the notation $\hh\X\backslash\hh\G$ we can refer to any of the (right) algebraic or operator algebraic concepts of coideals. 

Given a (right) coideal $L^\infty(\hh\X\backslash\hh\G)\subseteq L^\infty(\hh\G)$, we denote its (right) codual coideal by $\ell^\infty(\X_r)$,
\[
\ell^\infty(\X_r):=\widetilde{L^\infty(\hh\X\backslash\hh\G)} = L^\infty(\hh\X\backslash\hh\G)'\cap\ell^\infty(\G) \subseteq \ell^\infty(\G)
\]
 We adopt this notation from the situation where $L^\infty(\hh\X\backslash\hh\G)$ is of quotient type, highlighting that the codual is a (right) coideal. Note that all coideals of compact quantum groups are integrable. Let $L^2(\hh\X\backslash\hh\G)$ denote the closed subspace of $L^2(\hh\G)$ generated by $L^\infty(\hh\X\backslash\hh\G)$ via the GNS construction of $h_{\hh\G}$. There exists an orthogonal projection $Q_{\X} : L^2(\hh\G) \to L^2(\hh\X\backslash\hh\G)$. It has been shown that $Q_{\X}$
 is a group-like projection in $ \ell^\infty(\G)$ \cite[Lemma 3.4, Lemma 3.5]{MR2335776},  a self-adjoint projection  that satisfies
$(Q_{\X}\otimes \I)\Delta_\G(Q_\X) = Q_{\X}\otimes Q_{\X}$. Moreover, $Q_{\X}$ is a minimal central projection in $\ell^\infty(\X_r)$ that can reconstruct the coideal $\hh\X\backslash\hh\G$ and its codual  \cite{MR3894765}*{Proposition 1.5, Corollary 1.6},
\begin{align*}
      \fO(\hh\X\backslash\hh\G) &=\{ (\id\otimes   Q_{\X}\omega)(\ww^{\hh\G})^* \st \omega\in \ell_{F}^1(\G) \}, \numberthis \label{algquo}
      \\
      L^\infty(\hh\X\backslash\hh\G) &= \{  (\id\otimes   Q_{\X}\omega)(\ww^{\hh\G})^* \st \omega\in \ell^1(\G)  \}^{\sigma-\text{weak closure}},
      \\
   \ell^\infty(\X_r) &= \{ x\in \ell^\infty(\G) \st (Q_{\X}\otimes 1)\Delta_\G(x) = Q_{\X}\otimes x \}.
\end{align*}
In \cite{MR3894765}, the duality is established through  the right regular representation (along with some additional adjustments on the duality), whereas we utilize the left regular representation. It is evident  that by employing the unitary antipode of $\G$, we can transition from (right) coideals to (left) coideals or vice versa. Hence, for each (left) coideal of $\G$, denoted by $\ell^\infty(\X)$, there exists a (right) coideal $\ell^\infty(\X_r)$ such that $\ell^\infty(\X)= R_\G (\ell^\infty(\X_r))$. It is notable that $P_{\X} = R_\G(Q_{\X})$ is a group-like projection, $\Delta_{\G}(P_{\X})(\I\otimes P_{\X})= P_{\X}\otimes P_{\X}$.  Particularly, for every (left) coideal of $\G$, we have
\begin{equation}\label{linftyX}
    \ell^\infty(\X)= R_\G\left(\ell^\infty(\X_r)\right)=\{x\in \ell^\infty(\G)\st \Delta_\G(x)(\I\otimes P_{\X})= x\otimes P_{\X} \}.
\end{equation}
Using \eqref{linftyX}, we observe that $P_\X x = \varepsilon_\G(x)P_\X$ for all $x\in\ell^\infty(\X)$.



Finally, It should be noted that we can reconstruct the (right) coideal $\hh\X\backslash\hh\G$ by $P_\X$, the group-like projection associated with the (left) coideal $\ell^\infty(\X)$,
 $$\fO(\hh\X\backslash\hh\G) = \{u_{P_\X\xi,\eta} : u\in \Rep(\hh\G), \xi,\eta\in H_u\} = \{ (P_\X \omega\otimes\id)(W_\G) : \omega\in \ell^1_F(\G)\}.$$

\begin{remark}\label{Coideals of Quantum Groups}
     Let us conclude this subsection by summarizing the main point we want to emphasize about coideals of discrete and compact quantum groups. There is a bijection between the following sets of  objects:
\begin{itemize}
     \item (right) coideal subalgebras $\fO(\hh\X\backslash\hh\G)\subseteq \fO(\hh\G)$;
     \item (right)  coideal \cst-subalgebras $C(\hh\X\backslash\hh\G)\subseteq C(\hh\X\backslash\hh\G)$;
     \item (right)  coideal \cst-subalgebras $C^u(\hh\X\backslash\hh\G)\subseteq C^u(\hh\X\backslash\hh\G)$;
     \item (right)  coideal von Neumann subalgebras $L^\infty(\hh\X\backslash\hh\G)\subseteq L^\infty(\hh\G)$;
     \item (right) coideal von Neumann subalgebras $\ell^\infty(\X_r)\subseteq \ell^\infty(\G)$;
     \item (left) coideal von Neumann subalgebras $\ell^\infty(\X)\subseteq\ell^\infty(\G)$;
     \item group-like projections $P_\X\in \ell^\infty(\G)$.
\end{itemize}
  In particular, when we invoke any of the above objects, the existence of each of the other objects is automatically implied. We will frequently use this fact.
\end{remark}

\subsection{Actions of discrete quantum groups}\label{2.4}
We will start with the definition of actions of discrete quantum groups on operator systems given in \cite{dRH23}.
\begin{definition}
Let $\G$ be a discrete quantum group.   An operator system $X$ is called a (left) $\G$-operator system if there exists a unital completely isometric (u.c.i.) map $\alpha : X\to \ell^\infty(\G)\otimes_{\fF} X$ such that $(\Delta_\G\otimes\id)\circ\alpha= (\id\otimes\alpha)\circ\alpha$ and
    $$\overline{\Span}(c_0(\G)\otimes 1)\alpha(X) = c_0(\G)\otimes X.$$
    The map $\alpha$ is called the action of $\G$ on $X$ and will denote by $\alpha: \G\act X$.
   Note that, if $X$ is a unital \cst-algebra then $\alpha$ is a $*$-homomorphism and we call $X$ a $\G$-\cst-algebra.
Let $X$ and $Y$ be $\G$-operator systems with corresponding actions $\alpha$ and $\beta$ respectively. Then a u.c.p. map $\Psi : X\to Y$ is called {\it $\G$-equivariant} if $\beta\circ\Psi = (\id\otimes\Psi)\circ\alpha$.
\end{definition}
Let $A$ be a $\G$-\cst-algebra, $\alpha:\G\act A$, and $\mu\in S(A)$, the $\G$-equivariant u.c.p. map
$$\fP_\mu = (\id\otimes\mu)\circ\alpha : A\to \ell^\infty(\G)$$
is called the {\it Poisson transform} of $\mu$. In fact, any $\G$-equivariant u.c.p. map $\Psi: A\to \ell^\infty(\G)$ is a Poisson transform of $\mu$,  where $\mu = \varepsilon_\G\circ\Psi$.
Let $A$ be a $\G$-\cst-algebra with a corresponding action $\alpha:\G\act A$. The {\textit {co-kernel}} of the action $\alpha$ is the von Neumann algebra
\[
N_\alpha=\{ \fP_\mu(a) : a\in A, \mu\in S(A)\}{''}.
\]
It is known that  $N_\alpha$ is a Baaj-Vaes subalgebra of $\ell^\infty(\G)$ \cite{MR4442841}*{Proposition 2.9}, so there exists a closed quantum subgroup $\hh{\G}_\alpha\leq \hh\G$ such that $N_\alpha = \ell^\infty(\G_\alpha)$. The action  $\alpha$ is called {\it faithful} if $\G_\alpha = \G$.

\begin{remark}\label{ex} Let $\G$ be a discrete quantum group.
  Let $\pi : C^u(\hh\G)\to B(H_\pi)$ be a $\G$-representation and $U_\pi\in \ell^\infty(\G)\vtp \B(H_\pi)$ the corresponding unitary corepresentation. It is clear that $B(H_\pi)$ and the induced \cst-algebra $C^\pi(\hh\G):= \pi(C^u(\hh\G))$ are $\G$-\cst-algebras via the action $\ad_\pi(\cdot):= U_\pi^*(1\otimes a)U_\pi$.
\end{remark}

We expand the concept of amenability for a discrete quantum group to $\G$-operator subsystems of  $\ell^\infty(\G)$  with the action defined by the restricted comultiplication. 
Note that  a discrete quantum group $\G$ is {\it amenable} if $\ell^\infty(\G)$ admits a (left) invariant mean,  i.e. a $\G$-equivariant u.c.p. map $m : \ell^\infty(\G)\to \C$.
\begin{definition}
    Let $\G$ be a discrete quantum group  and $X$ be a  weak$^*$ closed operator subsystem of $\ell^\infty(\G)$. Let $\Delta_\G(X) \subset \ell^\infty(\G){\otimes}_{\fF} X$,  define a $\G$-action on $X$. Then $X$ is called
    \begin{itemize}
        \item  {\it relatively amenable} if there exists a $\G$-equivariant u.c.p. map from $\ell^\infty(\G)$ into $X$.
        \item {\it $\G$-injective} if there exists a $\G$-equivariant u.c.p. projection from $\ell^\infty(\G)$ onto $X$.
    \end{itemize}
\end{definition}
    The $\G$-injectivity of $X\subseteq \ell^\infty(\G)$ as defined above is equivalent to $X$ being injective in the category of $\G$-operator systems with $\G$-equivariant u.c.p. maps as morphisms. This equivalence holds because $\ell^\infty(\G)$ is $\G$-injective \cite{MR4442841}*{Proposition 4.14}. More precisely, $X$ is $\G$-injective iff the following holds: for $\G$-operator systems $Y$ and $Z$ with a $\G$-equivariant u.c.i. map $\iota : Y\to Z$, if there is a $\G$-equivariant u.c.p. map $\varphi : Y\to X$, then there exists a $\G$-equivariant u.c.p. map $\tilde{\varphi} : Z\to X$ such that $\tilde{\varphi}\circ \iota = \varphi$.

An alternative characterization of relative amenability for coideals, linked to the presence of an invariant mean type condition, is presented in \cite{AS21(1)}. We provide this description for the reader's convenience.
\begin{definition}
Let $E$ be a subspace of $\ell^1(\G)$. A state $m:\ell^\infty(\G)\to \C$ is called {\it $E$-invariant} if for all $f\in E$, $x\in\ell^\infty(\G)$, $f * m(x)= m((f\otimes\id)\Delta_\G(x)) =f(1) m(x)$.
\end{definition}
Note that given $f\in \ell^1(\G)$, we let $fP_\X, P_\X f\in\ell^1(\G)$ be the functionals defined by setting
$$fP_\X(x) = f(P_\X x), P_\X f(x) = f(x P_\X), x\in \ell^\infty(\G),$$
and we let $P_\X \ell^1(\G) = \{P_\X f : f\in\ell^1(\G)\}$ and $\ell^1(\G)P_\X = \{fP_\X : f\in\ell^1(\G)\}$.
\begin{theorem}\cite{AS21(1)}*{Theorem 3.4}\label{RAcoideal}
    Let $\G$ be a discrete quantum group and $\ell^\infty(\X)\subseteq \ell^\infty(\G)$ be a coideal with the associated group-like projection $P_\X$. The following are equivalent:
    \begin{enumerate}
        \item $\ell^\infty(\X)$ is relatively amenable;
        \item there exists a $P_\X \ell^1(\G)$-invariant state $m_\X : \ell^\infty(\G)\to \C$;
        \item there exists a $\ell^1(\G)P_\X$-invariant state $m_\X : \ell^\infty(\G)\to \C$.
    \end{enumerate}
\end{theorem}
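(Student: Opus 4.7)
The plan is to prove the three conditions equivalent via a Poisson-type correspondence that converts $\G$-equivariant u.c.p. maps $\ell^\infty(\G)\to\ell^\infty(\X)$ into invariant states $m_\X$ on $\ell^\infty(\G)$, and vice versa.

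For (1)$\Rightarrow$(2), starting from a $\G$-equivariant u.c.p. map $E$, I would set $m_\X := \varepsilon_\G\circ E$. Using the counit property $(\id\otimes\varepsilon_\G)\Delta_\G=\id$ together with equivariance $\Delta_\G\circ E = (\id\otimes E)\circ\Delta_\G$, one computes that $(f*m_\X)(x) = f(E(x))$ for every $f\in\ell^1(\G)$. Since $E(x)\in\ell^\infty(\X)$, the identity $y P_\X = \varepsilon_\G(y)P_\X$ for $y\in\ell^\infty(\X)$ recorded just after \eqref{linftyX} turns this into $(P_\X\cdot g)(E(x)) = g(P_\X)\,m_\X(x) = (P_\X\cdot g)(1)\,m_\X(x)$ for every $f=P_\X\cdot g\in P_\X\ell^1(\G)$, which is the required invariance. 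The implication (1)$\Rightarrow$(3) is symmetric, using the companion identity $P_\X y = \varepsilon_\G(y)P_\X$.

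For (2)$\Rightarrow$(1), given a $P_\X\ell^1(\G)$-invariant state $m_\X$, I would define the Poisson-type transform $\Psi(x) := (\id\otimes m_\X)\Delta_\G(x)$. This is u.c.p., and coassociativity gives $\G$-equivariance $\Delta_\G\circ\Psi = (\id\otimes\Psi)\circ\Delta_\G$ in one line. The only remaining task is to check that $\Psi(x)\in\ell^\infty(\X)$; by \eqref{linftyX} this reduces to verifying
\[
\Delta_\G(\Psi(x))(1\otimes P_\X)= \Psi(x)\otimes P_\X .
\]
I would pair both sides with an arbitrary $f\otimes g\in\ell^1(\G)\otimes\ell^1(\G)$; invoking equivariance to pass $\Psi$ through $\Delta_\G$, the required equality collapses to the identity of functionals
\[
(g\cdot P_\X)\circ\Psi = g(P_\X)\,m_\X
\]
on $\ell^\infty(\G)$, which is precisely the hypothesis $(P_\X\cdot g)*m_\X = g(P_\X)\,m_\X$ rewritten. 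The case (3)$\Rightarrow$(1) is symmetric.

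The main obstacle will be the bookkeeping in (2)$\Rightarrow$(1): since $m_\X$ is not assumed normal, I must first confirm that the slice $(\id\otimes m_\X)$ gives a well-defined u.c.p. map $\ell^\infty(\G)\vtp\ell^\infty(\G)\to\ell^\infty(\G)$ (this is standard — slicing by any state of a von Neumann algebra is u.c.p.), and then carefully verify that the interplay between $\Delta_\G$, right multiplication by $1\otimes P_\X$, and the subsequent slice correctly translates the geometric defining relation of $\ell^\infty(\X)$ into the algebraic invariance condition on $m_\X$. Once the conventions for the module action of $P_\X$ on $\ell^1(\G)$ are fixed, the computation reduces to the functional identity displayed above.
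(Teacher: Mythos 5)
Your argument is correct and is essentially the argument the paper itself relies on: Theorem \ref{RAcoideal} is imported from \cite{AS21(1)} without proof here, but the counit-composition/Poisson-transform correspondence you describe ($m_\X=\varepsilon_\G\circ E$ in one direction, $\Psi=(\id\otimes m_\X)\circ\Delta_\G$ in the other) is exactly the technique used in the proof of the neighbouring Theorem \ref{Amenability Theorem for Mp}, specialized from $\G$-injectivity to relative amenability, where only containment of the image in $\ell^\infty(\X)$ rather than the projection property is required. The single point left to pin down is the one you already flag — which of the two module actions of $P_\X$ on $\ell^1(\G)$ matches the defining relation $\Delta_\G(x)(\I\otimes P_\X)=x\otimes P_\X$ and which matches its adjoint $(\I\otimes P_\X)\Delta_\G(x)=x\otimes P_\X$ — and since $\ell^\infty(\X)$ is a von Neumann subalgebra both relations characterize it, so conditions (2) and (3) are indeed handled by the symmetric computations you indicate.
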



\subsection{Compact Quasi-subgroups of compact quantum groups}
As previously stated, the idempotent functional $\omega_{\hh\X} := h_{\hh\X}\circ q_{\hh\X}\in \fO(\hh\G)^*$, assigned to a quantum space $\hh\X$, may not be a positive functional. Following the terminology in \cite{MR4055973} and related references, we examine the case where $\omega_{\hh\X}$ is positive.
\begin{definition}
    Let $\fO(\hh\X\backslash\hh\G)$ be a coideal subalgebra of $\hh\G$. We call $\hh\X$ a {\it compact quasi-subgroup} of $\hh\G$ if $\omega_{\hh\X}$ is an idempotent state.
\end{definition}
In the case where $\hh\X$ is a compact quasi-subgroup, the map
$$E_{\omega_{\hh\X}} := (\omega_{\hh\X}\otimes\id)\circ\Delta_{\hh\G} : \fO(\hh\G)\to \fO(\hh\X\backslash\hh\G)$$
is a $\hh\G$-equivariant projection onto $\fO(\hh\X\backslash\hh\G)$, \cite{MR3863479}*{Lemma 4.3}. It extends to a $\hh\G$-equivariant u.c.p. projection from $C(\hh\G)$ onto $C(\hh\X\backslash\hh\G)$, a $\hh\G$-equivariant u.c.p. projection from $C^u(\hh\G)$ onto $C^u(\hh\X\backslash\hh\G)$, and a normal $\hh\G$-equivariant u.c.p. projection from $L^\infty(\hh\G)$ onto $L^\infty(\hh\X\backslash\hh\G)$. Moreover, the corresponding group-like projection satisfies $P_\X = (\omega_{\hh\X}\otimes\id)(\Ww^{\hh\G})$ and $P_\X= R_\G(P_\X) = Q_\X$. It is worth noting that $P_\X$ is $\tau^\G$-invariant iff $\hh\X$ is a compact quasi-subgroup (see \cite{MR3835453}*{Theorem 3.1 and Theorem 4.3}). 

Given a compact quasi-subgroup $\hh\X$, we let $(L^2(\hh\X), \Gamma_{\X}, \eta_{\hh\X}, \Omega_{\hh\X})$ be the GNS construction of $\omega_{\hh\X}$,
\begin{equation}\label{Gamma}
\Gamma_\X:\fO(\hh\G)\to B(L^2(\hh\X)), \qquad \omega_{\hh\X}(\cdot) = \langle \Gamma_\X(\cdot) \Omega_{\hh\X},\Omega_{\hh\X}\rangle.
\end{equation}
The representation $\Gamma_{\X}$ is a $\G$-representation so its associated \cst-algebra $C^{\Gamma_{\X}}(\hh\G)$ is a $\G$-\cst-algebra (Remark \ref{ex}). In particular, if $\hh\h$ is a compact quantum subgroup of $\hh\G$ via the morphism $\Pi:\fO(\hh\G)\to \fO(\hh\h)$, then ${\Gamma_{\h}}=\lambda_{\h}\circ\Pi$.

Let $\hh\X$ be a compact quasi-subgroup of $\hh\G$. It turns out there is a decomposition $\prod_{\alpha\in I_\X} B(H_\alpha) \cong \fO(\hh\X)^*\subseteq \prod_{u\in \Irr(\hh\G)}B(H_u)$, where $\dim(H_\alpha) < \infty$ for all $\alpha$ (see, for example, \cite{MR3863479}). We set $c_{00}(\X) := \oplus_{\alpha\in I_\X}B(H_\alpha)\subseteq \ell^\infty(\G)$. It turns out that $c_{00}(\X)'' =\ell^\infty(\X)$. Indeed, we have that $(1\otimes P_\X)\Delta_\G(x) = x\otimes P_\X$ for all $x\in c_{00}(\X)$ (see, for example, the proof of \cite{DCDT21}*{Lemma 1.3}) and hence $P_\X x = \varepsilon_\G(x)P_\X$ for all $x\in c_{00}(\X)$ which implies $P_\X\in c_{00}(\X)$ because then $P_\X\in \prod_{\alpha\in I_\X} B(H_\alpha)$ is a projection onto a one-dimensional Hilbert space (see, also, \cite{DCDT21}*{Lemma 1.6 and Definition 1.5}). Hence, $c_{00}(\X)''$ is a von Neumann algebraic coideal of $\G$ such that $P_\X\in c_{00}(\X)'' \subseteq \ell^\infty(\X)$. Then, if we let $Q\in c_{00}(\X)''$ be the associated group-like projection by Remark \ref{Coideals of Quantum Groups}, we must have $P_\X = Q$ by minimality and hence $c_{00}(\X)'' = \ell^\infty(\X)$ \cite{MR3835453}.

As a result of the discussion in the above paragraph and \cite{DCDT21}*{Theorem 1.12}, there exists a faithful normal positive functional $\psi_\X : c_{00}(\X)\to \C$ such that $f*\psi_\X = f(1)\psi_\X$ for all $f\in\ell^1(\G)$ (this sort of result was actually known before \cite{DCDT21} (see the proof of \cite{MR3863479}*{Proposition 4.5})).

Let $L^2(\ell^\infty(\X), \psi_\X)$ be the GNS Hilbert space coming from $\psi_\X$. It turns out that there is a unitary isomorphism $L^2(\hh\X)\cong L^2(\ell^\infty(\X), \psi_\X):=\ell^2(\X)$ (cf. \cite{MR3863479}) that provides an analog of Plancherel's theorem for coideals. It gives us a nice duality theory for coideals between $\G$ and $\hh\G$. This duality allows us to take both embeddings $C^{\Gamma_{\X}}(\hh\G), \ell^\infty(\X)\subseteq B(\ell^2(\X))$ to be injective $*$-homomorphisms. Furthermore, the action $\G\act B(\ell^2(\X))$ restricts to the canonical actions $\G\act \ell^\infty(\X), C^{\Gamma_{\X}}(\hh\G)$. More precisely, we have
$ad_{{\Gamma_{\X}}}(\ell^\infty(\X))\subseteq \ell^\infty(\G)\otimes \ell^\infty(\X)$ and,
as mentioned in Remark \ref{ex}, $ad_{\Gamma_\X}(C^{\Gamma_\X}(\hh\G)) \subseteq \ell^\infty(\G)\otimes C^{\Gamma_\X}(\hh\G)$. This last claim was shown in \cite{ASV23}*{Section 4.1} (for a more general class of coideals). This claim was known by experts for compact quasi-subgroups before \cite{ASV23}, but we have no earlier reference. This analog of Plancherel's theorem and its consequences above will be essential.

\begin{definition} Let $\G$ be a discrete quantum group. Let $\pi:\fO(\hh\G)\to C^\pi(\hh\G)$ and $\sigma:\fO(\hh\G)\to C^\sigma(\hh\G)$ be $\G$-representations. We say $\pi$ is weakly contained in $\sigma$, written $\pi\prec \sigma$, if the map $\sigma(x)\to \pi(x)$ extends to a surjective $*$-homomorphism $C^\sigma(\hh\G)\to C^\pi(\hh\G)$, or equivalently if  $||\pi(\cdot)||\leq ||\sigma(\cdot)||$.
\end{definition}

It is known that a compact quantum group $\hh\G$ is {\it coamenable} if the counit $\varepsilon_{\hh\G} : \fO(\hh\G)\to \C$ extends continuously to $C(\hh\G)$. In other words, $\hh\G$ is coamenable if and only if $\varepsilon_{\hh\G}\prec \lambda_\G$.

A  notion of coamenability for  coideals  of quotient type (von Neumann algebraic) was introduced in \cite{MR4442841} which generalizes coamenability of a compact quantum group. In \cite{AS21(1)}, this definition is extended naturally to arbitrary coideals.
\begin{definition}
    Let $\G$ be a discrete quantum group and $\fO(\hh\X\backslash\hh\G)$ be a coideal subalgebra of $\hh\G$. We say $\hh\X\backslash\hh\G$ is  coamenable  if the restriction of $\varepsilon_{\hh\G}|_{\fO(\hh\X\backslash\hh\G)}$ extends continuously to a state on $C(\hh\X\backslash\hh\G)$.
\end{definition}
For a closed quantum subgroup $\hh\h$ of compact quantum group $\hh\G$,  coamenability of a coideal of quotient type $\hh\h\backslash\hh\G$ is equivalent with relative amenability of $\ell^\infty(\h)$, \cite{MR4442841}*{Theorem 3.11} .

This statement was generalized to compact quasi-subgroups in \cite{AS21(1)}*{Theorem 4.10} but it is phrased slightly differently. To elaborate, it was proved that for a compact quasi-subgroup $\hh\X$,  $\hh\X\backslash\hh\G$ is coamenable  iff $\omega_{\hh\X}$ factorizes through the left regular representation of $\G$. Since $\Gamma_\X$ is the GNS representation of $\omega_{\hh\X}$, this is clearly equivalent to having $\Gamma_\X\prec\lambda_\G$.

 \begin{remark}\label{Compact Quasisubgroups of Quantum Groups}
     Let us conclude this subsection by summarizing something we want to emphasize about compact quasi-subgroups of compact quantum groups. There is a bijection between the following sets of objects:
 \begin{itemize}
    \item compact quasi-subgroups $\hh\X$ of $\hh\G$;
     \item idempotent states $\omega_{\hh\X}\in C^u(\hh\G)^*$;
     \item group-like projections $P_\X\in \ell^\infty(\G)$ that are $\tau^\G$-invariant.
 \end{itemize}
    In particular, when we invoke any one of the above objects, the existence of each of the other objects is automatically implied. We will frequently use this fact.
 \end{remark}

   \section{(Co)amenability Properties of Coideals}\label{sec3}
Let $\ell^\infty(\X)$ be a coideal of a discrete quantum group $\G$. Recall that the space $M_{P_\X}$ is a weak$^*$-closed operator  subsystem of $\ell^\infty(\G)$ introduced in \cite{AS21(1)},
\begin{align}\label{MPh1}
  \ell^\infty(\X)\subseteq M_{P_{\X}} := \{x\in \ell^\infty(\G) : (1\otimes P_{\X})\Delta_\G(x)(1\otimes P_{\X}) = x\otimes P_{\X}\}\subseteq \ell^\infty(\G)
\end{align}
Note that $M_{P_\X}$ is a (left) $\G$-operator system with the  restriction of the comultiplication. Since for arbitrary coideals $\ell^\infty(\X)$ of $\G$ the corresponding group-like projection $P_\X\in \ell^\infty(\G)$ is not necessarily central we do not know whether $\ell^\infty(\X)$ equals to $M_{P_\X}$.  

 In this section, we will establish a characterization of $\G$-injectivity of a coideal of a discrete quantum group $\G$ that is analogous to the invariant state type characterizations of relative amenability of $\ell^\infty(\X)$  mentioned in Section \ref{2.4}. As an application, we obtain that for a compact quasi-subgroup $\hh\X$, coamenability of $\hh\X\backslash\hh\G$ implies $\G$-injectivity of $\ell^\infty(\X)$, and this yields
$\ell^\infty(\X) = M_{P_\X}$. We also are able to prove that $\G$-injectivity of $\ell^\infty(\X)$ yields nuclearity  of $C(\hh\X\backslash\hh\G)$.

We will first make some general observations about $M_{P_\X}$.
Let $\G$ be a discrete quantum group and let $\ell^\infty(\X)$ be a coideal of $\G$ with associated group-like projection $P_\X$. 
Note that if $\ell^\infty(\X)$ is quotient type then $P_{\X}$ is central which implies $M_{P_{\X}} = \ell^\infty(\X)$ \cite{MR3552528}*{Theorem 4.3}.

A special case of the following theorem can be found in \cite{MR4055973}. Namely, if $L^\infty(\hh\X\backslash\hh\G)$ is finite-dimensional then $\ell^\infty(\X) = M_{P_{\X}}$. Let's outline the rationale briefly. It is proven in \cite[Corollary 4.2]{MR4055973} that all idempotent states of a discrete quantum group $\G$ are normal and they correspond one-to-one with normal idempotent states of $\hh\G$, as stated in \cite[Theorem 4.12]{MR4055973}. By employing \cite{MR4055973}*{Theorem 4.16},  we conclude  that  idempotent states in $\ell^1(\G)$ are in bijection with  finite-dimensional coideals of $\hh\G$.
Assume $L^\infty(\hh\X\backslash\hh\G)$ is finite-dimensional and let $m_{\X}\in \ell^1(\G)$ be the corresponding idempotent state. 
The kernel of $m_\X$, $J_{m_\X}= \{x\in \ell^\infty(\G) \st m_\X(x^*x)=0  \}$ is a weak$^*$ closed left ideal of $\ell^\infty(\G)$. If we let $P_{\X}^{\perp}$  the complement of the projection $P_\X$, then we have $J_{m_\X} =\ell^\infty(\G)P_{\X}^{\perp}$ \cite[Lemma 4.4]{MR4055973}. 
Therefore, we have 
$$m_{\X}(P_{\X}\cdot P_{\X})=m_{\X}(P_{\X}\cdot) = m_{\X}(\cdot P_{\X}) = m_\X(\cdot),$$
Let $E := (\id\otimes m_{\X})\circ\Delta_\G: \ell^\infty(\G)\to \ell^\infty(\X)$ be a (normal) $\G$-equivariant u.c.p. projection onto $\ell^\infty(\X)$. Then, it is easy to check that for every $x\in M_{P_{\X}}$, $E(x)=x$.

Recall that given $f\in \ell^1(\G)$, we let $fP_\X, P_\X f\in\ell^1(\G)$ be the functionals defined by setting
$$fP_\X(x) = f(P_\X x), P_\X f(x) = f(x P_\X), x\in \ell^\infty(\G)$$
and we let $P_\X \ell^1(\G) = \{P_\X f : f\in\ell^1(\G)\}$ and $\ell^1(\G)P_\X = \{fP_\X : f\in\ell^1(\G)\}$.

\begin{theorem}\label{Amenability Theorem for Mp}
    Let $\G$ be a discrete quantum group and $\fO(\hh\X\backslash\hh\G)$ be a coideal subalgebra of $\hh\G$. We have that $\ell^\infty(\X)$ is $\G$-injective if and only if there exists a $\ell^1(\G)P_\X$-invariant state $m : \ell^\infty(\G)\to\C$ such that
    \begin{align}
        m(P_{\X}\cdot) = m(\cdot P_{\X}) = m(\cdot). \label{Amenable State Condition}
    \end{align}
    Moreover, if $\ell^\infty(\X)$ is $\G$-injective then $\ell^\infty(\X) = M_{P_{\X}}$.
\end{theorem}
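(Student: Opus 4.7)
The plan is to set up a bijection between $\G$-equivariant u.c.p.\ projections $E\colon\ell^\infty(\G)\to\ell^\infty(\X)$ and states $m$ of the kind described in the statement, via $m=\varepsilon_\G\circ E$ with inverse $E=(\id\otimes m)\circ\Delta_\G$. The identity $(\id\otimes\varepsilon_\G\circ E)\circ\Delta_\G=(\id\otimes\varepsilon_\G)\circ(\id\otimes E)\circ\Delta_\G=(\id\otimes\varepsilon_\G)\circ\Delta_\G\circ E=E$, where the second equality uses $\G$-equivariance of $E$ and the third uses the counit axiom, shows these two assignments are mutually inverse. Moreover, for any state $m$ the formula $E_m:=(\id\otimes m)\circ\Delta_\G$ is automatically unital, completely positive, and $\G$-equivariant (the last by coassociativity), so the theorem reduces to matching the range-and-idempotence properties of $E$ with the stated conditions on $m$.

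For the forward direction, we take $m=\varepsilon_\G\circ E$ and exploit that $P_\X\in\ell^\infty(\X)$ (immediate from \eqref{linftyX} and $\Delta_\G(P_\X)(1\otimes P_\X)=P_\X\otimes P_\X$) lies in the multiplicative domain of $E$ by Choi's theorem. This yields $E(P_\X z)=P_\X E(z)$ and $E(zP_\X)=E(z)P_\X$; since $\varepsilon_\G$ is a $*$-homomorphism on $\ell^\infty(\G)$ with $\varepsilon_\G(P_\X)=1$, we immediately obtain $m(P_\X\,\cdot)=m(\cdot\,P_\X)=m$, so the state $m_\X$ of the statement is $m$ itself. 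For the $\ell^1(\G)P_\X$-invariance, we apply $\varepsilon_\G\otimes\id$ to \eqref{linftyX} (and to its adjoint form $(1\otimes P_\X)\Delta_\G(y)=y\otimes P_\X$) to derive $yP_\X=\varepsilon_\G(y)P_\X$ for $y\in\ell^\infty(\X)$; plugging $y=E(x)$ gives $E(x)P_\X=m(x)P_\X$, and slicing by $f\in\ell^1(\G)$ using $\G$-equivariance then identifies $((fP_\X)\otimes m)\Delta_\G(x)$ with $f(E(x)P_\X)=f(P_\X)m(x)$.

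For the reverse direction, given $m$ as in the statement, we define $E=(\id\otimes m)\circ\Delta_\G$ and verify the two projection conditions. The critical step is $E(\ell^\infty(\G))\subseteq\ell^\infty(\X)$: using \eqref{linftyX} and $\G$-equivariance of $E$, this reduces to the pointwise identity $E(b)P_\X=m(b)P_\X$ for all $b\in\ell^\infty(\G)$, which after testing against a normal $\phi\in\ell^1(\G)$ becomes the convolution identity $((\phi P_\X)*m)(b)=\phi(P_\X)m(b)$ — exactly the assumed $\ell^1(\G)P_\X$-invariance of $m$. To see $E|_{\ell^\infty(\X)}=\id$, we apply $\id\otimes m$ to $\Delta_\G(y)(1\otimes P_\X)=y\otimes P_\X$ and use $m(\cdot\,P_\X)=m$ together with $m(P_\X)=1$, producing $E(y)=y$.

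The moreover clause then follows from essentially the same calculation one step up: for $y\in M_{P_\X}$, applying $\id\otimes m$ to $(1\otimes P_\X)\Delta_\G(y)(1\otimes P_\X)=y\otimes P_\X$ and using both $m(P_\X\,\cdot)=m$ and $m(\cdot\,P_\X)=m$ yields $E(y)=y$; since $E$ takes values in $\ell^\infty(\X)$, this forces $M_{P_\X}\subseteq\ell^\infty(\X)$, and the reverse inclusion is \eqref{MPh1}. The main technical subtlety we expect is bookkeeping — tracking on which side $P_\X$ sits throughout the slicing arguments and ensuring that the non-normal slice $(\id\otimes m)$ interacts correctly with multiplication and with equivariance — but the substantive content is concentrated in the two identities $E(b)P_\X=m(b)P_\X$ (in the reverse direction) and $yP_\X=\varepsilon_\G(y)P_\X$ (for $y\in\ell^\infty(\X)$), both clean consequences of \eqref{linftyX} and the invariance hypothesis on $m$.
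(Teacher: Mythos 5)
Your proof is correct and follows essentially the same route as the paper: the correspondence $m=\varepsilon_\G\circ E$ and $E=(\id\otimes m)\circ\Delta_\G$, the multiplicative-domain argument for $P_\X$, and the computation $(\id\otimes m)\bigl((1\otimes P_\X)\Delta_\G(x)(1\otimes P_\X)\bigr)=x$ for $x\in M_{P_\X}$ to get the moreover clause. The only difference is that you prove the $\ell^1(\G)P_\X$-invariance of $\varepsilon_\G\circ E$ and the inclusion $E(\ell^\infty(\G))\subseteq\ell^\infty(\X)$ in-line from \eqref{linftyX}, where the paper cites \cite{AS21(1)}*{Theorem 3.3}; your reductions of both to the identities $P_\X E(x)=m(x)P_\X$ are valid (just keep the left/right placement of $P_\X$ consistent with the convention $fP_\X=f(P_\X\,\cdot)$).
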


\begin{proof}
    Let $E : \ell^\infty(\G)\to \ell^\infty(\X)$ be a $\G$-equivariant u.c.p. projection onto $\ell^\infty(\X)$. It follows that $m = \varepsilon_\G\circ E$ is a $\ell^1(\G)P_{\X}$-invariant state \cite{AS21(1)}*{Theorem 3.3}, and $m(P_{\X}) = 1$. To justify the latter, simply note that
    $$m(P_{\X}) = \varepsilon_\G(E(P_{\X})) = \varepsilon_\G(P_{\X}) = 1.$$
    From this we make the observation
    $$\overline{m(P_{\X})}m(P_{\X}) = 1 = m(P^*_{\X}P_{\X}) = m(P_{\X}P^*_{\X}).$$
    Hence, $P_{\X}$ lies in the multiplicative domain of the u.c.p. map $m : \ell^\infty(\G)\to \C$ and we deduce that
    $$m(P_{\X}\cdot) = m(\cdot P_{\X}) = m(\cdot).$$
    Conversely, assume the existence of a $\ell^1(\G)P_\X$-invariant state $m : \ell^\infty(\G)\to\C$ such that $m(P_\X\cdot) = m(\cdot) = m(\cdot P_\X)$. Define $E$ by setting
    $$E(x) = (\id\otimes m)\Delta_\G(x),\quad x\in \ell^\infty(\G)$$
    It is easy to see that $E$ is a $\G$-equivariant u.c.p. map. Since $m$ is $\ell^1(\G)P_\X$-invariant, $E(\ell^\infty(\G))\subseteq \ell^\infty(\X)$ \cite{AS21(1)}*{Theorem 3.3}. Let $x\in M_{P_{\X}}$,
    \begin{align*}
        E(x)&= (\id\otimes m)\Delta_\G(x) = (\id\otimes m)(1\otimes P_{\X})\Delta_\G(x)(1\otimes P_{\X}) =(\id\otimes m)(x\otimes P_\X)= x.
    \end{align*}
   Therefore, $E$ is a projection onto $M_{P_\X}$, and since $\ell^\infty(\X)\subseteq M_{P_\X}$ we have, $E(\ell^\infty(\G)) = M_{P_{\X}} = \ell^\infty(\X)$.
\end{proof}

\begin{theorem}\label{Coamenability Implies Amenability: CQSs}
    Let $\G$ be a discrete quantum group  and $\hh\X$ be a compact quasi-subgroup of $\hh\G$. If $\hh\X\backslash\hh\G$ is  coamenable  then $\ell^\infty(\X)$ is $\G$-injective.
\end{theorem}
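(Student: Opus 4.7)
My plan is to apply Theorem~\ref{Amenability Theorem for Mp}: it suffices to exhibit a state $m : \ell^\infty(\G) \to \C$ that is $\ell^1(\G)P_\X$-invariant and satisfies $m(P_\X)=1$, since the multiplicative-domain argument reproduced in the proof of that theorem then promotes $m(P_\X)=1$ to $m(P_\X\cdot) = m(\cdot P_\X) = m(\cdot)$ and yields the $\G$-equivariant u.c.p.\ projection $\ell^\infty(\G) \twoheadrightarrow \ell^\infty(\X)$ witnessing $\G$-injectivity.

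The first step is to unpack coamenability. By \cite{AS21(1)}*{Theorem 4.10}, coamenability of $\hh\X\backslash\hh\G$ is equivalent to $\Gamma_\X \prec \lambda_\G$, which provides a state $\tilde{\omega}_{\hh\X} \in C(\hh\G)^*$ satisfying $\tilde{\omega}_{\hh\X} \circ \Lambda_{\hh\G} = \omega_{\hh\X}$ on $\fO(\hh\G)$. Using the chain of inclusions $C(\hh\G) \subseteq L^\infty(\hh\G) \subseteq B(\ell^2(\G))$ together with $\ell^\infty(\G) = L^\infty(\hh\G)' \subseteq B(\ell^2(\G))$, I would define $m := \Omega|_{\ell^\infty(\G)}$ where $\Omega$ is a Hahn--Banach extension of $\tilde\omega_{\hh\X}$ to a state on $B(\ell^2(\G))$. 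A more hands-on alternative is to build $m$ as a weak$^*$-cluster point of restrictions $\omega_{\xi_i}|_{\ell^\infty(\G)}$ of vector states whose restrictions to $C(\hh\G)$ approximate $\tilde\omega_{\hh\X}$; the existence of such $\xi_i$ is guaranteed by the weak containment $\Gamma_\X \prec \lambda_\G$.

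The verifications of the two conditions on $m$ hinge on the structural formula
\[
P_\X = (\omega_{\hh\X} \otimes \id)\Ww^{\hh\G} = (\tilde\omega_{\hh\X} \otimes \id)\ww^{\hh\G},
\]
the second equality coming from $\omega_{\hh\X} = \tilde\omega_{\hh\X} \circ \Lambda_{\hh\G}$. Expanding $\Delta_\G(x) = \ww_\G^*(1 \otimes x)\ww_\G$ and slicing by $\tilde\omega_{\hh\X}$ on the $L^\infty(\hh\G)$-leg reduces such computations to the idempotence $\omega_{\hh\X} * \omega_{\hh\X} = \omega_{\hh\X}$. Applied to $x = P_\X$ this gives $m(P_\X) = 1$, and applied to a general $x \in \ell^\infty(\G)$ together with $f \in \ell^1(\G)$ it gives $((fP_\X) \otimes m)\Delta_\G(x) = f(P_\X)\,m(x)$, i.e.\ the $\ell^1(\G)P_\X$-invariance of $m$.

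The main obstacle will be the invariance verification: the Hahn--Banach extension $\Omega$ (or cluster point of vector states) must interact correctly with the multiplicative unitary so that the pentagon-type slicings above output exactly the scalar $f(P_\X)$. The compact quasi-subgroup hypothesis is essential precisely here: the positivity of $\omega_{\hh\X}$ is what makes $P_\X$ an orthogonal projection and lets $\omega_{\hh\X}$ descend to a \emph{state} $\tilde\omega_{\hh\X}$ on $C(\hh\G)$, so that these slicings yield a positive (rather than merely bounded) functional $m$ and the idempotence translates cleanly into the invariance.
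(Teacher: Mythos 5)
Your reduction to Theorem \ref{Amenability Theorem for Mp} is sound: an $\ell^1(\G)P_\X$-invariant state $m$ on $\ell^\infty(\G)$ with $m(P_\X)=1$ does suffice, since the multiplicative-domain argument then puts $P_\X$ into the multiplicative domain of $m$ and gives \eqref{Amenable State Condition}. The invariance verification, though only sketched, is also fixable for \emph{any} state extension $m$ of $\omega_{\hh\X}$ to $B(\ell^2(\G))$: one cannot literally ``slice by $\tilde\omega_{\hh\X}$ on the $L^\infty(\hh\G)$-leg'' of $\ww_\G^*(1\otimes x)\ww_\G$, because that leg mixes $L^\infty(\hh\G)$ with $\ell^\infty(\G)$; the correct mechanism is that $m$ restricts to $\varepsilon_{\hh\G}$ on $\fO(\hh\X\backslash\hh\G)$ and on $\fO(\hh\G/\hh\X)$, so these subalgebras lie in the multiplicative domain of $m$ and the slice $(\id\otimes m)$ distributes over the product $\bigl((P_\X\otimes 1)\ww_\G^*\bigr)\bigl((1\otimes x)\ww_\G\bigr)$. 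This is exactly the computation in the paper's proof, and it uses only that $m$ extends $\omega_{\hh\X}$.

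The genuine gap is the normalization $m(P_\X)=1$. Since $P_\X$ lies in $\ell^\infty(\G)$ and not in $C(\hh\G)$, its value under $m$ is \emph{not} determined by the requirement $m|_{C(\hh\G)}=\tilde\omega_{\hh\X}$, so neither a Hahn--Banach extension nor a cluster point of arbitrary approximating vector states controls it. Concretely, for $\hh\G=\hh{\mathbb{Z}}$ and $\fO(\hh\X\backslash\hh\G)=\C 1$ one has $\omega_{\hh\X}=h_{\hh\G}$ and $P_\X$ is the rank-one projection onto $\delta_0\in\ell^2(\mathbb{Z})$; the vector state $\omega_{\delta_5}$ restricts to $\omega_{\hh\X}$ on $C^*_r(\mathbb{Z})$ yet vanishes on $P_\X$, and the resulting map $(\id\otimes m)\Delta_\G$ is a shift, not a projection. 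An invariant state alone only yields relative amenability (Theorem \ref{RAcoideal}), which is a priori strictly weaker than $\G$-injectivity; condition \eqref{Amenable State Condition} is precisely the extra input your construction fails to deliver, and ``applying the invariance to $x=P_\X$'' only produces the tautology $f(P_\X)m(P_\X)=f(P_\X)m(P_\X)$. The repair is the one the paper makes: by \cite{AS21(1)}*{Corollary 4.12} the approximating unit vectors $\xi_\alpha$ can be chosen inside $P_\X\ell^2(\G)$, whence $\omega_{\xi_\alpha}(P_\X\,\cdot)=\omega_{\xi_\alpha}(\cdot\,P_\X)=\omega_{\xi_\alpha}(\cdot)$ holds exactly for each $\alpha$ and passes to the cluster point.
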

\begin{proof}
It is known that coamenability of $\hh\X\backslash\hh\G$ is equivalent with
 $\omega_{\hh\X}\in C(\hh\G)^*$ by \cite{AS21(1)}*{Theorem 4.10}.  Let  $(\xi_\alpha)\subseteq P_\X \ell^2(\G)$   be a net of unit vectors such that $\omega_{\xi_\alpha} \xrightarrow{weak^*} \omega_{\hh\X}$, like in \cite{AS21(1)}*{Corollary 4.12}.
    Let $m : B(\ell^2(\G))\to \C$ be a weak$^*$ cluster point of the net $(\omega_{\xi_\alpha})$ (which necessarily extends $\omega_{\hh\X}$). Recall that $P_\X = (\id\otimes \omega_{\hh\X})(\ww_\G)$. Take $f\in \ell^1_F(\G)$ and $x\in \ell^\infty(\G)$,
    \begin{align*}
        (f P_{\X}\otimes m)\Delta_\G(x)
    &= (f\otimes m)\left((P_{\X}\otimes 1)\ww^*_\G(1\otimes x)\ww_\G\right)
        \\
        &= (f\otimes \id)\left(\left(\id\otimes \omega_{\hh\X}\right)\left(\left(P_{\X}\otimes 1\right)\ww^*_\G\right)\left(\id\otimes m\right)\left(\left(1\otimes x\right)\ww_\G\right)\right)
        \\
        &= (f\otimes m)\left((P_\X \otimes x)\ww_\G\right)
        \\
        &=m(x) (f\otimes \omega_{\hh\X})( (P_\X\otimes 1)\ww_\G )
        = f(P_\X)m(x),
    \end{align*}
where in the second and fourth equality we respectively used the facts that  $\fO(\hh\X\backslash \hh\G)$ and $\fO(\hh\G / \hh\X)$ are subsets of the multiplicative domain of $m$ (which is true because $m|_{\fO(\hh\X\backslash\hh\G)} = \omega_{\hh\X}|_{\fO(\hh\X\backslash\hh\G)} = \varepsilon_{\hh\G}|_{\fO(\hh\X\backslash\hh\G)}$ and $m|_{\fO(\hh\G / \hh\X)} = \omega_{\hh\X}|_{\fO(\hh\G/\hh\X)} = \varepsilon_{\hh\G}|_{\fO(\hh\G / \hh\X)}$).    Furthermore, $\omega_{\xi_\alpha}(P_{\X}\cdot) = \omega_{\xi_\alpha}(\cdot P_{\X}) = \omega_{\xi_\alpha}(\cdot)$ for all $\alpha$ hence the same is true of $m|_{\ell^\infty(\G)}$ as well. Now apply Theorem \ref{Amenability Theorem for Mp}.
\end{proof}

We can use the statement of \cite{M18}*{Theorem 4.7} to prove the next corollary. Unfortunately, the proof of \cite{M18}*{Theorem 4.7} does not work beyond the unimodular case. It is assumed that there exists a $\G$-equivariant u.c.p. projection $E_0 : B(\ell^2(\G))\to \ell^\infty(\G)$  such that $E_0|_{L^\infty(\hh\G)} = h_{\hh\G}$ \cite{M18}*{(2.3)}. This latter condition, however, implies $h_{\hh\G}$ is $\G$-invariant, which implies $\G$ is unimodular thanks to \cite{MR4442841}*{Lemma 5.2}.
We do not see how to alter the proof of \cite{M18}*{Theorem 4.7} so that the condition $E_0|_{L^\infty(\hh\G)} = h_{\hh\G}$ is not required. Therefore, the following appears to be new.
\begin{corollary}\label{Amenable Mp Corollary}
    Let $\G$ be an amenable discrete quantum group. Then, for every compact quasi-subgroup $\hh\X$,  $\ell^\infty(\X)$ is $\G$-injective. Therefore, $\ell^\infty(\X) = M_{P_{\X}}$ holds for every compact quasi-subgroup $\hh\X$.
\end{corollary}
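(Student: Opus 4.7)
The plan is to bootstrap from Theorem~\ref{Coamenability Implies Amenability: CQSs}, sidestepping the unimodular assumption that obstructs the approach via \cite{M18} sketched in the paragraph preceding the corollary. The key observation is that amenability of the discrete quantum group $\G$ automatically makes every compact quasi-subgroup coideal $\hh\X\backslash\hh\G$ coamenable in the sense of Section~\ref{sec2}.

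First I would invoke the classical equivalence for discrete quantum groups: amenability of $\G$ is equivalent to coamenability of its compact dual $\hh\G$ \cites{MR3693148,MR2276175}, a result that holds without unimodularity. In particular, the counit $\varepsilon_{\hh\G}:\fO(\hh\G)\to\C$ extends continuously to a state on $C(\hh\G)$.

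Next, fix a compact quasi-subgroup $\hh\X$ of $\hh\G$. Since $C(\hh\X\backslash\hh\G)$ sits inside $C(\hh\G)$ as the norm closure of $\fO(\hh\X\backslash\hh\G)$, the restriction of the extended counit to $C(\hh\X\backslash\hh\G)$ is a continuous state agreeing with $\varepsilon_{\hh\G}$ on $\fO(\hh\X\backslash\hh\G)$, which is precisely coamenability of $\hh\X\backslash\hh\G$. Theorem~\ref{Coamenability Implies Amenability: CQSs} then delivers $\G$-injectivity of $\ell^\infty(\X)$, and the \emph{moreover} clause of Theorem~\ref{Amenability Theorem for Mp} yields $\ell^\infty(\X)=M_{P_\X}$ at no extra cost.

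There is no serious obstacle here: the argument combines only the classical amenable/coamenable duality for discrete quantum groups with results already established in this section. The conceptual point is that passing through coamenability of the coideal lets us completely avoid constructing a $\G$-equivariant u.c.p.~projection $B(\ell^2(\G))\to\ell^\infty(\G)$ whose restriction to $L^\infty(\hh\G)$ is the Haar state, which is the step in \cite{M18} that silently forces $\G$ to be unimodular.
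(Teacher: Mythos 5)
Your proof is correct and is essentially identical to the paper's: the paper also deduces coamenability of $\hh\G$ from amenability of $\G$, observes that the extended counit restricts to give coamenability of $\hh\X\backslash\hh\G$ for every compact quasi-subgroup $\hh\X$, and then applies Theorem~\ref{Coamenability Implies Amenability: CQSs} (with the $M_{P_\X}$ equality coming from Theorem~\ref{Amenability Theorem for Mp}). Your write-up simply spells out the restriction-of-the-counit step in more detail than the paper does.
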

\begin{proof}
    It follows that $\hh\G$ is coamenable by \cite{MR2276175}*{Theorem 3.8}. It is clear that, for every compact quasi-subgroup $\hh\X$ of $\hh\G$, $\hh\X\backslash\hh\G$ is  coamenable  and then we apply Theorem \ref{Coamenability Implies Amenability: CQSs}.
\end{proof}
Recall that $C^u(\hh\X\backslash\hh\G)$ is defined to be the closure of the embedding of $\fO(\hh\X\backslash\hh\G)$ inside of $C^u(\hh\X\backslash\hh\G)$. Here, we identify $C^u(\hh\G)$ concretely with $C^{\varpi_\G}(\hh\G)$, where $\varpi_\G$ is the universal $\G$-representation.

Consider the universal representation of $C^u(\hh\X\backslash\hh\G)$:
$$\varpi_{\hh\X\backslash\hh\G} := \bigoplus_{\mu\in S(C^u(\hh\X\backslash\hh\G))} \pi_\mu : C^u(\hh\X\backslash\hh\G) \to B(H_{\varpi_{\hh\X\backslash\hh\G}})$$
where $\pi_\mu$ is the GNS representation obtained from $\mu$. Write $C^{\varpi}(\hh\X\backslash\hh\G) = \varpi_{\hh\X\backslash\hh\G}(C^u(\hh\X\backslash\hh\G))$. Note that $\varpi_{\hh\X\backslash\hh\G}$ is faithful and $C^{\varpi}(\hh\X\backslash\hh\G)'' \cong C^u(\hh\X\backslash\hh\G)^{**}$ by universality.
\begin{remark}
    Apriori $C^u(\hh\X\backslash\hh\G)$ is not the maximal \cst-cover of $\fO(\hh\X\backslash\hh\G)$. It turns out that it is in the case where $\hh\X$ is a compact quasi-subgroup (see \cite{AS22(phd)}*{Proposition 5.2.3}). On the other hand, it appears unknown whether or not $C^u(\hh\X\backslash\hh\G)$ is the maximal \cst-cover of $\fO(\hh\X\backslash\hh\G)$ for arbitrary coideals.
\end{remark}

Recall that the $\hh\G$-equivariant projection $E_{\omega_{\hh\X}} : \fO(\hh\G) \to \fO(\hh\X\backslash\hh\G)$ admits a universal version $E_{\omega_{\hh\X}}^u : C^u(\hh\G)\to C^u(\hh\X\backslash\hh\G)$.

As our last application of Theorem~\ref{Amenability Theorem for Mp}, we obtain the following.
\begin{theorem}\label{GInjective Implies Nuclear}
    Let $\G$ be a discrete quantum group and $\hh\X$ be a compact quasi-subgroup of $\hh\G$. If $\ell^\infty(\X)$ is $\G$-injective then $C(\hh\X\backslash\hh\G)$ is nuclear.
\end{theorem}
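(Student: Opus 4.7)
The plan is to use Theorem~\ref{Amenability Theorem for Mp} to extract an invariant state from the $\G$-injectivity hypothesis, slice it against the multiplicative unitary to build a u.c.p.\ projection onto $L^\infty(\hh\X\backslash\hh\G)$, and then pass from injectivity of this bicommutant down to the completely positive approximation property (CPAP) of the \cst-subalgebra $C(\hh\X\backslash\hh\G)$.

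First, by Theorem~\ref{Amenability Theorem for Mp} the $\G$-injectivity of $\ell^\infty(\X)$ furnishes a state $m\in\ell^\infty(\G)^*$ which is $\ell^1(\G)P_\X$-invariant and satisfies $m(P_\X\,\cdot)=m(\cdot\,P_\X)=m$. I extend $m$ by Hahn--Banach to $\tilde m\in B(\ell^2(\G))^*$ (the multiplicative-domain identity on $P_\X$ persists since $P_\X\in\ell^\infty(\G)$). Using $\ww_\G\in\ell^\infty(\G)\vtp L^\infty(\hh\G)$, I define
\[
\Phi\colon L^\infty(\hh\G)\to L^\infty(\hh\G),\qquad \Phi(x)=(\tilde m\otimes\id)\Delta_{\hh\G}(x),
\]
which is plainly u.c.p. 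Using $P_\X=(\omega_{\hh\X}\otimes\id)(\Ww_\G)$, the idempotency of $\omega_{\hh\X}$, and the $\ell^1(\G)P_\X$-invariance together with the multiplicative-domain identities for $m$, I verify that $\Phi(L^\infty(\hh\G))\subseteq L^\infty(\hh\X\backslash\hh\G)$ and that $\Phi$ restricts to the identity on $L^\infty(\hh\X\backslash\hh\G)$. Composing with an arbitrary u.c.p.\ extension to $B(\ell^2(\G))$ (using injectivity of $B(\ell^2(\G))$ as an operator system) then gives a u.c.p.\ projection $B(\ell^2(\G))\twoheadrightarrow L^\infty(\hh\X\backslash\hh\G)$, so $L^\infty(\hh\X\backslash\hh\G)$ is an injective von Neumann algebra.

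To upgrade this injectivity to nuclearity of $C(\hh\X\backslash\hh\G)$, I plan to exploit the block decomposition $\ell^\infty(\X)=\prod_\alpha B(H_\alpha)$ into finite-dimensional matrix algebras from Section~\ref{sec2}, with central projections $Q_\alpha$ whose finite sums serve as u.c.p.\ cutoffs. Transported through the Plancherel-type duality between $\ell^\infty(\X)$ and $C(\hh\X\backslash\hh\G)$ on $\ell^2(\X)$, and combined with the $\hh\G$-equivariant conditional expectation $E_{\omega_{\hh\X}}\colon C(\hh\G)\to C(\hh\X\backslash\hh\G)$, these cutoffs should assemble into a net of finite-rank u.c.p.\ maps factoring through the $B(H_\alpha)$ and approximating the identity of $C(\hh\X\backslash\hh\G)$ in norm, giving CPAP and hence nuclearity. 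The hard step will be this last one: injectivity of the bicommutant does not automatically yield nuclearity of the \cst-algebra (that implication essentially amounts to the Haar state restricting faithfully to $C(\hh\X\backslash\hh\G)$, i.e., coamenability of $\hh\X\backslash\hh\G$, which is only established in Section~\ref{sec6}), so the Plancherel duality and the finite-dimensional block structure must be used decisively to produce genuinely norm-convergent --- rather than only ultraweakly convergent --- approximations.
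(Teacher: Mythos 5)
There is a genuine gap, and it sits exactly where you flagged it: your argument produces (at best) injectivity of $L^\infty(\hh\X\backslash\hh\G)$, the bicommutant of $C(\hh\X\backslash\hh\G)$ in the GNS representation of the Haar state, and injectivity of $\pi(A)''$ for \emph{one} representation $\pi$ never implies nuclearity of $A$ (every irreducible representation has bicommutant $B(H)$, which is injective). The proposed repair --- cutting off by finite sums of the blocks $Q_\alpha$ of $\ell^\infty(\X)$ and transporting through Plancherel duality --- does not work as stated: raw Fourier truncations are not u.c.p.\ and do not converge in norm (this is the Dirichlet-versus-Fej\'er kernel problem, already visible for $C^*_r(\mathbb{Z})$), and no summability kernel is supplied. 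Invoking coamenability of $\hh\X\backslash\hh\G$ to finish would be circular, since Theorem~\ref{coam} itself cites Theorem~\ref{GInjective Implies Nuclear}. There is also a technical problem in your first step: the invariance and multiplicative-domain identities for $m$ live on $\ell^\infty(\G)$, so slicing $\Delta_{\hh\G}(x)\in L^\infty(\hh\G)\vtp L^\infty(\hh\G)$ against a Hahn--Banach extension $\tilde m$ gives you no control; to use $m$ you must slice something whose first leg lies in $\ell^\infty(\G)$, i.e.\ an adjoint action $T\mapsto \wW^*(1\otimes T)\wW$ rather than the comultiplication of $\hh\G$.

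The paper's proof avoids the whole issue by changing representation: it works with the \emph{universal} representation $\varpi_{\hh\X\backslash\hh\G}$ of $C^u(\hh\X\backslash\hh\G)$, where the bicommutant is $C^u(\hh\X\backslash\hh\G)^{**}$, and injectivity of $A^{**}$ \emph{is} equivalent to nuclearity of $A$. Concretely, it forms the half-lifted unitary $\wW_\X=(\id\otimes\Lambda_\X\circ E^u_{\omega_{\hh\X}})\wW_\G$, defines $E(T)=(m\otimes\id)\bigl(\wW_\X^*(1\otimes T)\wW_\X\bigr)$ on $B(H_{\varpi_{\hh\X\backslash\hh\G}})$ with $m$ the invariant state from Theorem~\ref{Amenability Theorem for Mp} (composed with $R_\G$), and checks that $E$ is a conditional expectation onto the commutant $\varpi_{\hh\X\backslash\hh\G}(C^u(\hh\X\backslash\hh\G))'$; injectivity of the commutant gives injectivity of $C^u(\hh\X\backslash\hh\G)^{**}$, hence nuclearity of $C^u(\hh\X\backslash\hh\G)$, and nuclearity passes to the quotient $C(\hh\X\backslash\hh\G)$. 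If you want to salvage your approach, the fix is not a better approximation argument on $\ell^2(\X)$ but precisely this switch to the universal representation.
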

\begin{proof}
    Let $\Psi : \ell^\infty(\G)\to \ell^\infty(\X)$ be the $\G$-equivariant conditional expectation given by assumption. Let $\tilde{m}=\varepsilon_\G \circ \Psi : \ell^\infty(\G)\to\C$ be the associated $\ell^1(\G)P_\X$-invariant state which also satisfies $\tilde{m}(P_\X\cdot) = \tilde{m}(\cdot P_\X) = \tilde{m}(\cdot)$ thanks to Theorem~\ref{Amenability Theorem for Mp}. Set $m = \tilde{m}\circ R_\G$. It is straightforward to check that $\Phi := (m\otimes\id)\circ \Delta_\G$ is a conditional expectation $\ell^\infty(\G) \to R_\G(\ell^\infty(\X))$ onto $R_\G(\ell^\infty(\X))$ satisfying $(\Phi\otimes\id)\circ\Delta_\G = \Delta_\G\circ\Phi$.

    We will prove that $C^{\varpi}(\hh\X\backslash\hh\G)'$ is injective. Then $C^{\varpi}(\hh\X\backslash\hh\G)'' \cong C^u(\hh\X\backslash\hh\G)^{**}$ is injective which is equivalent to the nuclearity of $C^u(\hh\X\backslash\hh\G)$. Moreover, the quotient of a nuclear \cst-algebra is nuclear, so we deduce that $C(\hh\X\backslash\hh\G)$ is nuclear as desired.

    First, we explicitly denote the $*$-isomorphism
    $\Lambda_\X : C^u(\hh\X\backslash\hh\G)\to C^{\varpi}(\hh\X\backslash\hh\G)$, $\varpi_\G(a)\mapsto \varpi_{\hh\X\backslash\hh\G}(a)$.  Recall that
    $\wW_\G(P_\X\otimes 1) = (\id\otimes E^u_{\omega_{\hh\X}})(\wW_\G)$ by \cite{integrable}*{Theorem 7.4} and additional clarifications at the beginning of Section  $3$ in \cite{MR3835453}. Let us define $\wW_\X := (\id\otimes \Lambda_{\X}\circ E^u_{\omega_{\hh\X}})\wW_\G$ and set $$\mathrm{ad}_{\varpi_{\hh\X\backslash\hh\G}} : B(H_{\varpi_{\hh\X\backslash\hh\G}})\to \ell^\infty(\G)\overline{\otimes}B(H_{\varpi_{\hh\X\backslash\hh\G}}),\qquad
   \mathrm{ad}_{\varpi_{\hh\X\backslash\hh\G}}(\cdot) = \wW_\X^*(1\otimes (\cdot))\wW_\X.$$
    
    It is straightforward to observe that $(\id\otimes \mathrm{ad}_{\varpi_{\hh\X\backslash\hh\G}})\circ \mathrm{ad}_{\varpi_{\hh\X\backslash\hh\G}} = (\Delta_\G\otimes\id)\circ\mathrm{ad}_{\varpi_{\hh\X\backslash\hh\G}}$. We can define the map $E : B(H_{\varpi_{\hh\X\backslash\hh\G}})\to B(H_{\varpi_{\hh\X\backslash\hh\G}})$ by setting $E(T) =  (m\otimes \id)\circ \mathrm{ad}_{\varpi_{\hh\X\backslash\hh\G}}(T)$. We claim that $E : B(H_{\varpi_{\hh\X\backslash\hh\G}})\to C^{\varpi}(\hh\X\backslash\hh\G)'$ is a conditional expectation onto $C^{\varpi}(\hh\X\backslash\hh\G)' = \varpi_{\hh\X\backslash\hh\G}(C^u(\hh\X\backslash\hh\G))'$. It is clear that $E$ is a u.c.p. map. Observe that
    \begin{align*}
        \mathrm{ad}_{\varpi_{\hh\X\backslash\hh\G}}(E(T)) &= (P_\X\otimes 1)\mathrm{ad}_{\varpi_{\hh\X\backslash\hh\G}}(E(T)) ~~ \text{(since $\wW_\X(P_\X\otimes 1) = \wW_\X$)}
        \\
        &= (m\otimes\id\otimes \id)\left((1\otimes P_\X\otimes 1)\left(\id\otimes \mathrm{ad}_{\varpi_{\hh\X\backslash\hh\G}})\circ\mathrm{ad}_{\varpi_{\hh\X\backslash\hh\G}}(T)\right)\right)
        \\
        &= (m\otimes\id\otimes \id)\left((1\otimes P_\X\otimes 1)\left((\Delta_\G\otimes\id)\circ\mathrm{ad}_{\varpi_{\hh\X\backslash\hh\G}}(T)\right)\right)
        \\
        &= (P_\X\otimes 1)\left(( \Phi\otimes\id)\circ\mathrm{ad}_{\varpi_{\hh\X\backslash\hh\G}}(T)\right)
        \\
        &=  P_\X\otimes (m\otimes \id)\circ \mathrm{ad}_{\varpi_{\hh\X\backslash\hh\G}}(T) ~~ \text{(since {$P_\X\Phi(x) = \varepsilon_\G(\Phi(x))P_\X = m(x)P_\X$})}
        \\
        &= P_\X\otimes E(T)
    \end{align*}
   Where the fact that  $P_\X\Phi(x) = \varepsilon_\G(\Phi(x))P_\X$ can be found in \cite{AS21(1)}*{Lemma 3.1}
    and
    \begin{align}\label{Expn Equ}
        \mathrm{ad}_{\varpi_{\hh\X\backslash\hh\G}}(E(T)) = P_\X\otimes E(T)
        \iff (1\otimes E(T))W_\X = W_\X(1\otimes E(T)).
    \end{align}
    Then, from the combination of Equation~\eqref{Expn Equ} with the equality
    $$C^u(\hh\X\backslash\hh\G) = \overline{ \{ (f\otimes\id)\circ (\wW_\G(P_\X\otimes 1)) : f\in \ell^1(\G) \} }^{||\cdot||}$$
    we conclude that $E(T)\in \varpi_{\hh\X\backslash\hh\G}(C^u(\hh\X\backslash\hh\G))'$. On the other hand, given $T\in \varpi_{\hh\X\backslash\hh\G}(C^u(\hh\X\backslash\hh\G))'$, $(1\otimes T)\wW_\X = \wW_\X(1\otimes T)$ and so
    \begin{align*}
      E(T)  = (m\otimes\id)\circ \mathrm{ad}_{\varpi_{\hh\X\backslash\hh\G}}(T) &=  (m\otimes\id)\left(\wW_\X^* \wW_\X(1\otimes T)\right) =  T.
    \end{align*}
    By applying $m\otimes \id$ to the equation $\mathrm{ad}_{\varpi_{\hh\X\backslash\hh\G}}  \left( E(T)\right) = P_\X \otimes E(T)$ we deduce idempotency of $E$.
\end{proof}

\section{Furstenberg-Hamana Boundary for a representation}\label{sec4}
\subsection{Constructing Furstenberg-Hamana boundaries for $\mathbf{\G}$-representations}

In the classical case where $G$ is a discrete group and  $\pi: G \to B(H_\pi)$ is a unitary representation, Bearden and Kalantar  \cite{MR4276323} studied a relative $G$-injective envelope of $\C\subseteq B(H_\pi)$ based on the ``non-relative'' constructions due to Hamana \cites{MR0519044, MR0566081, MR1189167}. This yields  a notion of a (topological) boundary for an arbitrary $G$-representation $\pi$, which is called the Furstenberg-Hamana boundary   (FH-boundary) of the representation $\pi$.
In this section,  we are interested in such objects for discrete quantum groups. In fact, all the methods in \cite{MR4276323} pass verbatim from the classical situation to the quantum setting.

For a discrete quantum group $\G$, a $\G$-boundary is a $\G$-\cst-algebra $A$ such that every $\G$-equivariant u.c.p. map from $A$ into $\ell^\infty(\G)$ is completely isometric, \cite{MR4442841}*{Definition 4.1}.
   The authors in \cite{MR4442841} adapted Hamana's injective envelope construction for discrete quantum groups and proved the existence of a unique universal $\G$-boundary, known as the non-commutative Furstenberg boundary, and denoted it by $C(\partial_F \G)$.
   For a $\G$-representation $\pi$, we use Hamana's method \cites{MR0566081,Ham85} and the procedure used in  \cite{MR4276323}
 in order to construct a Furstenberg-Hamana  boundary for  a representation of a discrete quantum group.

Let $\G$ be a discrete quantum group and $\pi:\fO(\hh\G)\to B(H_\pi)$ be a $\G$-representation. We pointed out that $B(H_\pi)$ is a $\G$-\cst-algebra in Remark \ref{ex}.
We denote  the set of all $\G$-equivariant u.c.p. maps $B(H_\pi)\to B(H_\pi)$ by $\mathcal{G}_\pi$. We equip $\mathcal{G}_\pi$ with the following partial order
\[
\Phi\leq\Psi ~\text{if} ~ ||\Phi(x)||\leq ||\Psi(x)|| ~ \text{for all} ~ x\in B(H_\pi).
\]
The set $\mathcal{G}_\pi$ contains a minimal element $\Phi_0$ \cite{MR4276323}*{Proposition 3.3}. We denote the $\Image(\Phi_0)$ by $\B_\pi$ which is a \cst-algebra via the Choi-Effros product. Note that   the image of a minimal element of $\mathcal{G}_\pi$ is unique up to isomorphism in the category of $\G$-\cst-algebras with $\G$-equivariant u.c.p. maps as morphisms \cite{MR4276323}*{Proposition 3.5}.
Moreover, $\B_\pi$ is a $\G$-\cst-algebra with a (left) action $\alpha_\pi$ given by $\alpha_\pi=\mathrm{ad}_\pi|_{\B_\pi}$, \cite{MR4276323}*{Theorem 4.9}.
\begin{proposition}\label{Relative G-properties} Let $\G$ be a discrete quantum group. For every $\G$-representation $\pi$, the \cst-algebra $\B_\pi$ has the following properties:

    \begin{enumerate}
         \item { $\pi$-rigidity:} the identity is the unique $\G$-equivariant u.c.p. map on $\B_\pi$;
        \item { $\pi$-essentiality:} Every $\G$-equivariant u.c.p. map $\B_\pi\to B(H_\pi)$ is completely isometric;
        \item { $\pi$-injectivity:} If $X\subseteq Y\subseteq B(H_\pi)$ are $\G$-invariant subspaces of $B(H_\pi)$ and there exists a $\G$-equivariant u.c.p. map $\Psi : B(H_\pi)\to\ B(H_\pi)$ such that $\Psi(X) \subseteq \B_\pi$ then there exists a $\G$-equivariant u.c.p. map $\tilde{\Psi} : B(H_\pi)\to  B(H_\pi)$ such that $\tilde{\Psi}(Y) \subseteq \B_\pi$ and $\tilde{\Psi}|_X = \Psi|_X$.
    \end{enumerate}
\end{proposition}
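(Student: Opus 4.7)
The plan is to invoke the Hamana-style machinery of Bearden and Kalantar \cite{MR4276323}*{Section 3} essentially verbatim, the only substantive change being that the category of $G$-\cst-algebras with $G$-equivariant u.c.p.\ morphisms is replaced by its $\G$-equivariant analogue. All three properties flow from two basic facts already in place: first, $B(H_\pi)$ is $\G$-injective in the category of $\G$-operator systems, so $\G$-equivariant u.c.p.\ maps into $B(H_\pi)$ admit $\G$-equivariant extensions; second, $\Phi_0$ is a minimal element of $\mathcal{G}_\pi$ in the order defined by $\Phi\leq\Psi$ whenever $\|\Phi(x)\|\leq\|\Psi(x)\|$ for all $x$, with image $\B_\pi$.

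For (1), given a $\G$-equivariant u.c.p.\ map $\Psi:\B_\pi\to\B_\pi$, I would first extend the composition $\iota\circ\Psi:\B_\pi\to B(H_\pi)$ (with $\iota:\B_\pi\hookrightarrow B(H_\pi)$ the inclusion) to a $\G$-equivariant u.c.p.\ map $\tilde{\Psi}:B(H_\pi)\to B(H_\pi)$ using $\G$-injectivity of $B(H_\pi)$. Then $\tilde{\Psi}\circ\Phi_0\in\mathcal{G}_\pi$ and satisfies $\tilde{\Psi}\circ\Phi_0\leq\Phi_0$ by contractivity; by minimality, $\tilde{\Psi}\circ\Phi_0$ and $\Phi_0$ agree in norm. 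Combined with the fact that $\Phi_0|_{\B_\pi}=\id_{\B_\pi}$ (a standard consequence of the minimality of $\Phi_0$ together with the Choi-Effros description of $\B_\pi$ as its image), this forces $\Psi=\id_{\B_\pi}$.

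For (2), let $\Psi:\B_\pi\to B(H_\pi)$ be $\G$-equivariant u.c.p. Extend it to $\tilde{\Psi}:B(H_\pi)\to B(H_\pi)$ and consider $\tilde{\Psi}\circ\Phi_0\in\mathcal{G}_\pi$; by minimality again $\|\tilde{\Psi}\circ\Phi_0(x)\|=\|\Phi_0(x)\|$ for every $x$, so $\tilde{\Psi}$ is isometric on $\B_\pi$. The completely isometric version follows by amplifying to $M_n$, since the whole argument is $M_n$-functorial. Property (3) is the most direct: the map $\Phi_0\circ\Psi:B(H_\pi)\to B(H_\pi)$ is $\G$-equivariant u.c.p.\ with image contained in $\B_\pi$, and it agrees with $\Psi$ on $X$ because $\Psi(X)\subseteq\B_\pi$ and $\Phi_0$ restricts to the identity on $\B_\pi$; this already supplies the required $\tilde{\Psi}$.

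The main potential obstacle is verifying that $\Phi_0$ indeed restricts to the identity on its image, which I would treat essentially as a black box from Hamana's theory adapted to the $\G$-equivariant setting: one observes that $\Phi_0\circ\Phi_0\leq\Phi_0$ in the order, whence equality in norm by minimality, and the Choi-Effros product on $\B_\pi=\Image(\Phi_0)$ then forces $\Phi_0$ to be idempotent with $\Phi_0|_{\B_\pi}=\id$. Equivariance is preserved at every step because $\Phi_0$, $\Psi$, and the chosen u.c.p.\ extensions are all $\G$-equivariant by construction.
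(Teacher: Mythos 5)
Your overall strategy --- transplanting the Hamana/Bearden--Kalantar machinery for the minimal element $\Phi_0\in\mathcal{G}_\pi$ into the $\G$-equivariant setting --- is exactly what the paper does (its proof is literally ``similar to \cite{MR4276323}*{Proposition 3.4}''), and your arguments for (2) and (3) are essentially correct: minimality gives $\|\tilde\Psi(\Phi_0(x))\|=\|\Phi_0(x)\|$, hence isometry of $\Psi$ on $\B_\pi=\Phi_0(B(H_\pi))$, and for (3) the map $\Phi_0\circ\Psi$ does the job once one knows $\Phi_0\vert_{\B_\pi}=\id$. (For the complete isometry in (2) you should say a word about why minimality in the scalar-level order controls the matrix levels, but that is a standard point.)

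There is, however, a genuine gap in your proof of (1), and the same gap infects your justification that $\Phi_0$ is idempotent. From $\tilde\Psi\circ\Phi_0\leq\Phi_0$ and minimality you only get $\|\tilde\Psi(\Phi_0(x))\|=\|\Phi_0(x)\|$ for all $x$, i.e.\ that $\Psi$ is \emph{isometric} on $\B_\pi$ --- that is precisely statement (2), not statement (1). Norm preservation does not force a map to be the identity (any equivariant automorphism would preserve norms; the whole point of rigidity is to exclude these), so ``agree in norm \dots\ forces $\Psi=\id$'' is not a valid step. Likewise, ``$\Phi_0\circ\Phi_0\leq\Phi_0$, whence equality in norm, and Choi--Effros forces idempotency'' does not work: the Choi--Effros product presupposes an idempotent u.c.p.\ map, it does not produce one. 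The missing ingredient in both places is Hamana's Ces\`aro-averaging argument. For idempotency: take a point-weak$^*$ cluster point $\Phi_\infty$ of $\frac1n\sum_{k=1}^n\Phi_0^k$; then $\Phi_\infty\in\mathcal{G}_\pi$, $\Phi_\infty\leq\Phi_0$, so $\|\Phi_\infty(y)\|=\|\Phi_0(y)\|$ for all $y$ by minimality, while the telescoping identity gives $\Phi_\infty(\Phi_0(x)-x)=0$, hence $\Phi_0(\Phi_0(x))=\Phi_0(x)$. For rigidity: set $\theta=\tilde\Psi\circ\Phi_0$, note $\theta^k\leq\Phi_0$ for all $k$, take a cluster point $\theta_\infty$ of the Ces\`aro averages of $\theta$; minimality gives $\|\theta_\infty(y)\|=\|\Phi_0(y)\|$ and telescoping gives $\theta_\infty(\theta(x)-x)=0$, so $\Phi_0(\theta(x))=\Phi_0(x)$ for all $x$; evaluating at $x\in\B_\pi$ and using $\Phi_0\vert_{\B_\pi}=\id$ and $\Psi(x)\in\B_\pi$ yields $\Psi(x)=\Phi_0(\theta(x))=\Phi_0(x)=x$. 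Without this averaging step the proof of (1) does not go through.
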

\begin{proof}
   The proof is similar to the proof of \cite{MR4276323}*{Proposition 3.4}.
\end{proof}

\begin{remark}\label{F Boundary}
The Furstenberg-Hamana boundary of the left regular representation $
\lambda_\G:\fO(\hh\G)\to B(\ell^2(\G))$
is the non-commutative Furstenberg boundary of $\G$, i.e. $\B_{\lambda_\G}= C(\partial_F\G)$. Therefore, $C(\partial_F\G)$ enjoys the following properties \cite{MR4442841}*{Proposition 4.10, Proposition 4.13}:
\begin{itemize}
 \item  $\G$-rigidity: the identity map is the unique $\G$-equivariant u.c.p. map on $C(\partial_F\G)$;
    \item $\G$-essentiality: Every $\G$-equivariant u.c.p. map $C(\partial_F\G)\to A$ into any $\G$-\cst-algebra $A$ is completely isometric.
\item $\G$-injectivity: for any $\G$-\cst-algebras $A$ and $B$ equipped with $\G$-equivariant u.c.p. maps
$\Psi:A\to C(\partial_F\G)$ and $\iota: A\to B$, with $\iota$ completely isometric, there exists a $\G$-equivariant u.c.p.
map $\tilde{\Psi}:B\to C(\partial_F\G)$  such that $\Psi=\tilde{\Psi}\circ\iota.$
\end{itemize}
\end{remark}
We also have the following categorical property for $FH$-boundaries of $\G$-representations.
\begin{proposition}\label{Categorical Property of FH Boundaries}
    Let  $\pi$ and $\sigma$ be $\G$-representations of a discrete quantum group $\G$. There exists a $\G$-equivariant u.c.p. map $B(H_\pi)\to B(H_\sigma)$ if and only if there exists a $\G$-equivariant u.c.p. map $\B_\pi\to \B_\sigma$. In particular, if $\sigma$ is  weakly contained in $\pi$ then there is a $\G$-equivariant u.c.p. map $\B_\pi\to \mathcal{B}_\sigma$.
\end{proposition}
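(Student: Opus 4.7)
The proof splits into the ``iff'' equivalence and the ``in particular'' statement.

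\emph{The iff direction.} Let $\Phi_0^\pi \in \mathcal{G}_\pi$ and $\Phi_0^\sigma \in \mathcal{G}_\sigma$ denote the minimal $\G$-equivariant u.c.p.\ idempotents with images $\B_\pi$ and $\B_\sigma$, respectively, as supplied by the construction of the FH-boundary. Given a $\G$-equivariant u.c.p.\ $\Phi : B(H_\pi)\to B(H_\sigma)$, the composition $\Phi_0^\sigma\circ \Phi|_{\B_\pi} : \B_\pi\to\B_\sigma$ is $\G$-equivariant u.c.p.: the restriction $\Phi|_{\B_\pi}$ remains $\G$-equivariant since $\B_\pi$ is a $\G$-invariant subspace of $B(H_\pi)$, and composition with the $\G$-equivariant $\Phi_0^\sigma$ lands in $\B_\sigma$. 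Conversely, given a $\G$-equivariant u.c.p.\ $\Psi:\B_\pi\to\B_\sigma$, the composition $\iota_\sigma\circ\Psi\circ\Phi_0^\pi: B(H_\pi)\to B(H_\sigma)$ is $\G$-equivariant u.c.p., where $\iota_\sigma:\B_\sigma\hookrightarrow B(H_\sigma)$ is the $\G$-equivariant inclusion.

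\emph{Setting up the in particular.} Weak containment $\sigma\prec\pi$ provides a surjective $*$-homomorphism $\Lambda : C^\pi(\hh\G)\to C^\sigma(\hh\G)$ satisfying $\Lambda\circ\pi=\sigma$ on $\fO(\hh\G)$. Since $U_\pi=(\mathrm{id}\otimes\pi)(\wW_\G)$ and likewise for $\sigma$, applying $\mathrm{id}\otimes\Lambda$ yields $(\mathrm{id}\otimes\Lambda)(U_\pi)=U_\sigma$. This relation, together with the definitions $\mathrm{ad}_\pi(\cdot)=U_\pi^*(1\otimes\cdot)U_\pi$ and $\mathrm{ad}_\sigma(\cdot)=U_\sigma^*(1\otimes\cdot)U_\sigma$, forces $\Lambda$ to intertwine the adjoint actions. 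Consequently $\iota_\sigma\circ\Lambda: C^\pi(\hh\G)\to B(H_\sigma)$ is $\G$-equivariant u.c.p.\ along the $\G$-equivariant inclusion $C^\pi(\hh\G)\hookrightarrow B(H_\pi)$.

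\emph{The main obstacle.} It remains to extend $\iota_\sigma\circ\Lambda$ to a $\G$-equivariant u.c.p.\ map $B(H_\pi)\to B(H_\sigma)$, whereupon the iff produces the required map $\B_\pi\to\B_\sigma$. The difficulty is that a direct Arveson extension, available from operator-system injectivity of $B(H_\sigma)$, is generally not $\G$-equivariant, and $B(H_\sigma)$ itself fails to be $\G$-injective in the non-amenable case (averaging over $\G$ would require amenability). The resolution is to first land inside the boundary: compose with $\Phi_0^\sigma$ to obtain the $\G$-equivariant u.c.p.\ map $\Phi_0^\sigma\circ\Lambda : C^\pi(\hh\G)\to\B_\sigma$, and then apply the $\pi$-injectivity of $\B_\pi$ from Proposition~\ref{Relative G-properties}(3) together with the minimality characterization of $\Phi_0^\pi$ within $\mathcal{G}_\pi$. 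A Zorn-type minimality argument, mirroring the proof that the minimal element of $\mathcal{G}_\pi$ exists and has the claimed extension property, yields a $\G$-equivariant u.c.p.\ extension $B(H_\pi)\to \B_\sigma\subseteq B(H_\sigma)$. Composing with the iff then gives the desired $\G$-equivariant u.c.p.\ map $\B_\pi\to\B_\sigma$.
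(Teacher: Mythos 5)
Your proof of the ``if and only if'' is correct and is exactly the composition argument the paper intends (the paper simply refers to \cite{MR4276323}*{Proposition 3.17}): restrict/compose with the minimal idempotents $\Phi_0^\pi$ and $\Phi_0^\sigma$ in the two directions.

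The ``in particular'' part, however, contains a genuine gap, and it originates in a false premise. You assert that the Arveson extension of $\iota_\sigma\circ\Lambda\colon C^\pi(\hh\G)\to B(H_\sigma)$ is ``generally not $\G$-equivariant'' and then try to route around this. In fact any u.c.p.\ extension $\Phi\colon B(H_\pi)\to B(H_\sigma)$ of $\iota_\sigma\circ\Lambda$ is automatically $\G$-equivariant: since $\Phi|_{C^\pi(\hh\G)}=\Lambda$ is a $*$-homomorphism, $C^\pi(\hh\G)$ lies in the multiplicative domain of $\Phi$; the unitary $U_\pi=(\id\otimes\pi)(\wW_\G)$ has its second leg in $M(c_0(\G)\otimes C^\pi(\hh\G))$ and $(\id\otimes\Phi)(U_\pi)=U_\sigma$ is unitary, so $U_\pi$ lies in the multiplicative domain of $\id\otimes\Phi$ and hence $(\id\otimes\Phi)(\ad_\pi(T))=U_\sigma^*(1\otimes\Phi(T))U_\sigma=\ad_\sigma(\Phi(T))$. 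This multiplicative-domain observation is precisely how \cite{MR4276323} handles the weak-containment case, and it finishes the proof via the first part. There is no obstacle to resolve.

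Your substitute argument does not close the gap you created. Proposition~\ref{Relative G-properties}(3) ($\pi$-injectivity of $\B_\pi$) only extends maps $B(H_\pi)\to B(H_\pi)$ whose restriction to a $\G$-invariant subspace lands in $\B_\pi$; it says nothing about producing maps with values in $\B_\sigma$. Likewise $\B_\sigma$ is only a \emph{relative} injective object: it is injective for $\G$-invariant subspaces of $B(H_\sigma)$, not in the full category of $\G$-operator systems, so no injectivity is available to extend the $\G$-equivariant u.c.p.\ map $\Phi_0^\sigma\circ\iota_\sigma\circ\Lambda\colon C^\pi(\hh\G)\to\B_\sigma$ over the inclusion $C^\pi(\hh\G)\subseteq B(H_\pi)$ (only $C(\partial_F\G)=\B_{\lambda_\G}$ is genuinely $\G$-injective). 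The unspecified ``Zorn-type minimality argument'' cannot supply this extension either, since minimality of $\Phi_0^\pi$ produces idempotents on $B(H_\pi)$ with image $\B_\pi$, not maps into $\B_\sigma$. Replace that paragraph with the multiplicative-domain argument above and the proof is complete.
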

\begin{proof}
    The proof is similar to the proof of \cite{MR4276323}*{Proposition 3.17}.
\end{proof}
Consequently, weak equivalence of $\G$-representations implies their $FH$-boundaries are isomorphic \cite{MR4276323}*{Corollary 3.18}. The converse does not hold in general. For instance, the FH-boundary of universal and trivial representations are trivial, $\B_{\varpi_\G} = \B_{\varepsilon_{\hh\G}} = \C$

In the case of a $\G$-representation induced by the Haar state of a closed quantum subgroup, the $FH$-boundary is the (n.c.) Furstenberg boundary of the quantum subgroup.
\begin{proposition}\label{Furstenberg Boundaries of Subgroups Are FH Boundaries}
    Let $\G$ be a discrete quantum group. Let $\hh\h$ be a closed quantum subgroup of $\hh\G$ associated with a morphism $\Pi:\fO(\hh\G)\to \fO(\hh\h)$. Consider the $\G$-representation
    $\Gamma_{\h}=\lambda_{\h}\circ \Pi : \fO(\hh\G)\to C(\hh\h)$.
    We have $\B_{\Gamma_\h} = C(\partial_{F}\h)$.
\end{proposition}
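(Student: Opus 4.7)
The plan is to identify $\mathcal{B}_{\Gamma_\h}$ with $C(\partial_F\h)$ by exploiting the fact that, when the ambient Hilbert space is $\ell^2(\h)$, the $\G$-action $\mathrm{ad}_{\Gamma_\h}$ factors through the $\h$-action $\mathrm{ad}_{\lambda_\h}$ via the subgroup morphism. My first step would be to make the identification $H_{\Gamma_\h}=\ell^2(\h)$ explicit and to note that $C(\partial_F\h)\subseteq B(\ell^2(\h))$ in the usual way (coming from the Hamana construction for $\h$), so that $C(\partial_F\h)$ is well-positioned to be $\mathcal{B}_{\Gamma_\h}$.

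The key technical step is the compatibility identity
\[
\mathrm{ad}_{\Gamma_\h}=(\gamma\otimes\id)\circ \mathrm{ad}_{\lambda_\h},
\]
where $\gamma:\ell^\infty(\h)\hookrightarrow\ell^\infty(\G)$ is the Vaes dual of the subgroup morphism $\Pi$. To prove it I would show that the multiplicative unitary $U_{\Gamma_\h}=(\id\otimes\lambda_\h\circ\Pi)(\wW_\G)$ agrees with $(\gamma\otimes\id)(\wW_\h)$, using the intertwining property of $\Pi$ with respect to the comultiplications together with the equivalence of the Woronowicz and Vaes definitions of closed quantum subgroups (Definition~\ref{Vcqg} and the fact that these agree in the compact/discrete case). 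Injectivity of $\gamma$ then yields that a u.c.p. map $\Phi:B(\ell^2(\h))\to B(\ell^2(\h))$ is $\G$-equivariant (w.r.t.\ $\mathrm{ad}_{\Gamma_\h}$) if and only if it is $\h$-equivariant (w.r.t.\ $\mathrm{ad}_{\lambda_\h}$); the same criterion applies to u.c.p.\ maps between $\mathrm{ad}_{\Gamma_\h}$-invariant subspaces.

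With this correspondence in hand, the argument concludes quickly. First, by $\h$-injectivity of $C(\partial_F\h)$ (Remark~\ref{F Boundary} applied to $\h$), the inclusion $C(\partial_F\h)\subseteq B(\ell^2(\h))$ admits an $\h$-equivariant u.c.p. projection $E:B(\ell^2(\h))\to C(\partial_F\h)$. By the preceding paragraph, $E$ is $\G$-equivariant, so composing with the inclusion gives an element of $\mathcal{G}_{\Gamma_\h}$ whose image is $C(\partial_F\h)$. Second, any $\G$-equivariant u.c.p.\ map $C(\partial_F\h)\to C(\partial_F\h)$ is also $\h$-equivariant, so by $\h$-rigidity of $C(\partial_F\h)$ it must be the identity; this verifies condition (1) of Proposition~\ref{Relative G-properties} for $C(\partial_F\h)$ as a candidate for $\mathcal{B}_{\Gamma_\h}$.

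To finish, I would invoke the uniqueness of the image of a minimal element of $\mathcal{G}_\pi$ up to $\G$-equivariant isomorphism (the analogue for $\G$ of \cite{MR4276323}*{Proposition 3.5} used to define $\mathcal{B}_\pi$): the $\G$-$\pi$-rigidity of $C(\partial_F\h)$ combined with the existence of the $\G$-equivariant projection $E$ forces $C(\partial_F\h)$ to be the image of a minimal element of $\mathcal{G}_{\Gamma_\h}$, and hence $\mathcal{B}_{\Gamma_\h}\cong C(\partial_F\h)$ as $\G$-\cst-algebras. The main obstacle is the identity $U_{\Gamma_\h}=(\gamma\otimes\id)(\wW_\h)$: it is the bridge between the two definitions of closed quantum subgroup and must be traced carefully through the half-lifted universal multiplicative unitaries; everything else is a bookkeeping consequence of this identification together with properties already established in Remark~\ref{F Boundary} and Proposition~\ref{Relative G-properties}.
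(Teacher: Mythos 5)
Your proposal is correct and follows essentially the same route as the paper: the paper's proof also reduces to the observation that, since $\Delta_\G|_{\ell^\infty(\h)}=\Delta_\h$, a u.c.p.\ map on $B(\ell^2(\h))$ is $\G$-equivariant for $\mathrm{ad}_{\Gamma_\h}$ if and only if it is $\h$-equivariant, and then concludes from the properties of $C(\partial_F\h)$ in Remark \ref{F Boundary}. Your write-up simply makes explicit the identity $U_{\Gamma_\h}=(\gamma\otimes\id)(\wW_\h)$ and the minimality bookkeeping that the paper leaves implicit.
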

\begin{proof}
    Since $\Delta_\G|_{\ell^\infty(\h)} = \Delta_\h$, it is easy to show that a u.c.p. map $B(\ell^2(\h))\to B(\ell^2(\h))$ is $\G$-equivariant if and only if it is $\h$-equivariant. The claim then follows immediately from Remark \ref{F Boundary}.
\end{proof}

Recall that  the cokernel of an action $\alpha:\G\act A$ is a quantum group denoted by $\G_\alpha$. In particular, we will use the following notations for cokernels of some special actions:
\begin{itemize}
    \item For
    $\alpha_\pi:\G\act \B_\pi$ we write $\G_\pi := \G_{\alpha_{\pi}}$;
    \item For $\alpha_F:\G\act C(\partial_F \G)$ we write $\G_F := \G_{\alpha_F}$.
\end{itemize}
The von Neumann algebra $\ell^\infty(\G_F)$ is the unique minimal relatively amenable Baaj-Vaes subalgebra of $\ell^\infty(\G)$. This means that for every closed quantum subgroup $\hh\h\leq\hh\G$, if $\ell^\infty(\h)$ is a relatively amenable von Neumann subalgebra of $\ell^\infty(\G)$, then  $\hh\G_F\leq \hh\h$ (cf. \cite{MR4442841}*{Theorem 5.1}).

\subsection{Furstenberg-Hamana boundaries and relative amenability}
We now work towards the main result of this section. The result is a characterization of relative amenability of $\ell^\infty(\X)$ (in the context of compact quasi-subgroups) in terms of the existence of a $\G$-equivariant isomorphism $\B_{\Gamma_\X}\cong C(\partial_F\G)$.

First, we define lemmas that describe the relative amenability of coideals based on their interaction with the Furstenberg boundary of $\G$. These lemmas are crucial for proving our main result and may be of independent interest as they are applicable to any coideals.

Let $\G$ be a discrete quantum group. In what follows, given a $\G$-\cst-algebra $A$ and $\mu\in A^*$, we will let
$$\Fix(\mu) := \{f\in \ell^1(\G) : f*\mu = f(1)\mu\}.$$
\begin{lemma}\label{Stationarity Lemma}
    Let $A$ be a $\G$-\cst-algebra and $\mu\in A^*$.  For every $f\in \ell^1(\G)$, $f*\mathcal{P}_\mu(a)=f(1)\mathcal{P}_\mu(a)$ holds for every $a\in A$ if and only if
    $f\in\Fix(\mu)$.
\end{lemma}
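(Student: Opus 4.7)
The plan is to establish the stronger intertwining identity $f * \mathcal{P}_\mu = \mathcal{P}_{f*\mu}$ for all $f \in \ell^1(\G)$, and then deduce the lemma from the injectivity of the assignment $\nu \mapsto \mathcal{P}_\nu$ on $A^*$.

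The key computation rests on the $\G$-equivariance of the Poisson transform, namely $\Delta_\G \circ \mathcal{P}_\mu = (\id \otimes \mathcal{P}_\mu)\circ \alpha$. Unfolding definitions, for any $a \in A$,
\begin{align*}
f * \mathcal{P}_\mu(a) &= (\id \otimes f)\Delta_\G(\mathcal{P}_\mu(a)) = (\id \otimes f)(\id \otimes \mathcal{P}_\mu)\alpha(a) \\
&= (\id \otimes (f \circ \mathcal{P}_\mu))\alpha(a) = (\id \otimes (f * \mu))\alpha(a) = \mathcal{P}_{f*\mu}(a),
\end{align*}
where the fourth equality uses the identity $f \circ \mathcal{P}_\mu = f * \mu$ in $A^*$, which is immediate from $(f \circ \mathcal{P}_\mu)(a) = f((\id \otimes \mu)\alpha(a)) = (f \otimes \mu)\alpha(a) = (f * \mu)(a)$.

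To extract the biconditional, I would then apply the counit $\varepsilon_\G$ of $\G$, using the coaction counit property $(\varepsilon_\G \otimes \id)\alpha = \id_A$ (available because $\G$ is discrete, so $\varepsilon_\G$ is everywhere defined and normal on $\ell^\infty(\G)$). This yields $\varepsilon_\G \circ \mathcal{P}_\nu = \nu$ for every $\nu \in A^*$, so the map $\nu \mapsto \mathcal{P}_\nu$ is injective. Therefore $f * \mathcal{P}_\mu(a) = f(1)\mathcal{P}_\mu(a)$ for every $a \in A$ is equivalent to $\mathcal{P}_{f*\mu} = \mathcal{P}_{f(1)\mu}$, which is in turn equivalent to $f * \mu = f(1)\mu$, i.e.\ $f \in \Fix(\mu)$.

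The only real obstacle here is bookkeeping around convolution conventions: the computation implicitly uses $f * x = (\id \otimes f)\Delta_\G(x)$ for $x \in \ell^\infty(\G)$, which is the convention that dovetails with $(f * \mu)(a) = (f \otimes \mu)\alpha(a)$ for $\mu \in A^*$ via the $\G$-equivariance of $\mathcal{P}_\mu$. Should the opposite convention be in force in the paper, the forward direction is still obtained directly by applying $\varepsilon_\G$ (which collapses $f * \mathcal{P}_\mu(a) = f(1)\mathcal{P}_\mu(a)$ to $(f*\mu)(a) = f(1)\mu(a)$), and the backward direction by a slightly longer coassociativity argument based on $(\Delta_\G \otimes \id)\alpha = (\id \otimes \alpha)\alpha$, sliced by $f$ and $\mu$ in the appropriate legs.
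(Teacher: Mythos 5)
Your proposal is correct and follows essentially the same route as the paper: the central computation $(\id\otimes f)\Delta_\G(\mathcal{P}_\mu(a)) = (\id\otimes f*\mu)\alpha(a)$ coming from $\G$-equivariance of the Poisson transform, combined with $\varepsilon_\G\circ\mathcal{P}_\nu=\nu$ (i.e.\ the counit property of the coaction), are exactly the two ingredients the paper uses for the two directions. Packaging them as the identity $f*\mathcal{P}_\mu=\mathcal{P}_{f*\mu}$ plus injectivity of $\nu\mapsto\mathcal{P}_\nu$ is only a cosmetic reorganization, and your convolution conventions match the paper's.
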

\begin{proof}
    Assume  $f\in \Fix(\mu)$,  then for every $a\in A$,
    $$f*\mathcal{P}_\mu(a) = (\id\otimes f)\Delta_\G(\mathcal{P}_\mu(a)) =(\id\otimes f*\mu)\alpha(a) = f(1)\mathcal{P}_\mu(a).$$
    Conversely, if $f*\mathcal{P}_\mu(a) = f(1)\mathcal{P}_\mu(a)$, then
    
    \begin{equation*}
    f*\mu(a) = \varepsilon_\G*f*\mu(a) = (\varepsilon_\G * f)\Delta_\G(\fP_\mu(a))= \varepsilon_\G(f*\mathcal{P}_\mu(a)) = f(1)\mu(a).
    \qedhere
    \end{equation*}
\end{proof}

\begin{lemma}\label{Relative Amenability Lemma}
    Let $\ell^\infty(\X)$ be a coideal  of a discrete quantum group  $\G$. The following are equivalent:
    \begin{enumerate}
        \item $\ell^\infty(\X)$ is relatively amenable;
        \item there exists a state $\nu : C(\partial_F\G) \to\C$ such that $\ell^1(\G)P_{\X}\subseteq \Fix(\nu)$;
        \item there exists a state $\nu : C(\partial_F\G) \to\C$ such that $\fP_\nu(C(\partial_F\G)) \subseteq \ell^\infty(\X)$.
    \end{enumerate}
\end{lemma}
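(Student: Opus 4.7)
The plan is to link the three conditions through the Poisson transform formalism together with the $\G$-injectivity of both $\ell^\infty(\G)$ (\cite{MR4442841}*{Proposition 4.14}) and $C(\partial_F\G)$ (Remark~\ref{F Boundary}). The key fact that makes everything go smoothly is that every $\G$-equivariant u.c.p.\ map $\Psi:A\to\ell^\infty(\G)$ from a $\G$-\cst-algebra $A$ equals the Poisson transform $\fP_{\varepsilon_\G\circ\Psi}$, as recorded in Section~\ref{2.4}.

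For the equivalence $(2)\Leftrightarrow(3)$, I would first rewrite the defining identity $\Delta_\G(y)(1\otimes P_\X)=y\otimes P_\X$ from \eqref{linftyX} by slicing with $\id\otimes f$ for $f\in\ell^1(\G)$ to obtain the equivalent formulation $y\in\ell^\infty(\X)$ iff $(fP_\X)*y=f(P_\X)\,y$ for every $f\in\ell^1(\G)$. Applying this with $y=\fP_\nu(a)$ and combining with Lemma~\ref{Stationarity Lemma} (for the functional $fP_\X$ in place of $f$), the condition $\fP_\nu(C(\partial_F\G))\subseteq\ell^\infty(\X)$ is immediately equivalent to $fP_\X\in\Fix(\nu)$ for every $f\in\ell^1(\G)$, i.e.\ to $\ell^1(\G)P_\X\subseteq\Fix(\nu)$.

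For $(1)\Rightarrow(3)$, let $E:\ell^\infty(\G)\to\ell^\infty(\X)$ be a $\G$-equivariant u.c.p.\ map given by relative amenability. I would use the $\G$-injectivity of $\ell^\infty(\G)$ to extend the unital (trivially $\G$-equivariant) embedding $\C\hookrightarrow\ell^\infty(\G)$ along $\C\hookrightarrow C(\partial_F\G)$, producing a $\G$-equivariant u.c.p.\ map $\Psi:C(\partial_F\G)\to\ell^\infty(\G)$. Setting $\nu:=\varepsilon_\G\circ E\circ\Psi\in S(C(\partial_F\G))$, the composition $E\circ\Psi$ is $\G$-equivariant u.c.p.\ and hence coincides with the Poisson transform $\fP_\nu$; its image lies in $\ell^\infty(\X)$, giving $(3)$.

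For $(3)\Rightarrow(1)$, I would dualize this construction: using the $\G$-injectivity of $C(\partial_F\G)$, extend the unital embedding $\C\hookrightarrow C(\partial_F\G)$ along $\C\hookrightarrow\ell^\infty(\G)$ to obtain a $\G$-equivariant u.c.p.\ map $\Theta:\ell^\infty(\G)\to C(\partial_F\G)$. Then $\fP_\nu\circ\Theta:\ell^\infty(\G)\to\ell^\infty(\X)$ is $\G$-equivariant u.c.p.\ (the Poisson transform being automatically $\G$-equivariant), which witnesses relative amenability of $\ell^\infty(\X)$. I do not anticipate a serious obstacle; all the heavy lifting was done earlier (in the construction of $C(\partial_F\G)$, the $\G$-injectivity of $\ell^\infty(\G)$, and Lemma~\ref{Stationarity Lemma}), so the only delicate point is verifying the slicing identity used for $(2)\Leftrightarrow(3)$.
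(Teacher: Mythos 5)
Your proof is correct and follows essentially the same route as the paper's: the equivalence of (2) and (3) via Lemma \ref{Stationarity Lemma} combined with the convolution characterization $\ell^\infty(\X)=\{x : f*x=f(1)x \text{ for all } f\in\ell^1(\G)P_\X\}$, and the link to (1) via $\G$-equivariant u.c.p.\ maps in both directions between $\ell^\infty(\G)$ and $C(\partial_F\G)$. The only cosmetic difference is that you connect (1) directly to (3) by composing equivariant maps and Poisson transforms, whereas the paper passes through (2) using the invariant-state characterization of relative amenability (Theorem \ref{RAcoideal}); the underlying mechanism is identical.
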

\begin{proof}
    Fix a $\G$-equivariant u.c.p. projection $\Psi : \ell^\infty(\G)\to C(\partial_F\G)$ and a $\G$-equivariant embedding $C(\partial_F\G)\hookrightarrow \ell^\infty(\G)$.
    
    (1)\!$\implies$\!(2) By Theorem \ref{RAcoideal},
    there exists a $\ell^1(\G)P_\X$-invariant state $m \in S(\ell^\infty(\G))$.
    We let  $\nu := m|_{C(\partial_F\G)}$ which  is obviously still $\ell^1(\G)P_\X$-invariant, i.e., satisfies $\ell^1(\G)P_{\X}\subseteq \Fix(\nu)$.
    
    (2)\!$\implies$\!(1) Assume that $\ell^1(\G)P_{\X}\subseteq \Fix(\nu)$ for some state $\nu : C(\partial_F\G)\to\C$. Then $m := \nu\circ \Psi$ is a $\ell^1(\G)P_\X$-invariant state, i.e.     for every $f\in \ell^1(\G)$,
    \[
   (fP_{\X})*m = (fP_{\X})*(\nu\circ\Psi) = (fP_{\X}*\nu)\circ\Psi = f(P_{\X})\nu\circ\Psi= fP_{\X}(1) m.
    \]
    (2)\!$\iff$\!(3) The statement follows immediately from Lemma \ref{Stationarity Lemma} and the observation that
    \begin{equation*}
    \ell^\infty(\X) = \{x\in \ell^\infty(\G) : f*x = f(1)x \quad \forall f\in \ell^1(\G)P_{\X}\}.
\qedhere
    \end{equation*}
\end{proof}

 Note that for a discrete quantum $\G$, and a Hilbert space $H$,  the von Neumann algebra $\ell^\infty(\G)\vtp B(H)$ equipped with the action $\Delta_\G\otimes \id$ is $\G$-injective \cite{MR4442841}*{Proposition 4.14}. In particular, for the Hilbert space $H=\C$ we have $\G$-injectivity of $\ell^\infty(\G)$. Now we will prove that for an arbitrary compact quasi-subgroup $\hh\X$ of $\hh\G$, $\ell^\infty(\X)$ is $\Gamma_\X$-injective, where $\Gamma_{\X} : \fO(\hh\G)\to B(\ell^2(\X))$ is the $\G$-representation associated with $\X$ and introduced in \eqref{Gamma}. This result generalizes \cite{MR4442841}*{Proposition 4.14} to the coideals associated with compact quasi-subgroups, where the Hilbert space is $H = \C$.

Let $\eta_\X : c_{00}(\X)\to \ell^2(\X)$ be the GNS map coming from the GNS construction of $\psi_{\X} : c_{00}(\X)\to \C$, the $\G$-invariant functional introduced in Section $2$. The $\G$-representation $\Gamma_\X$ satisfies
\[
\Gamma_\X(a)\eta_{\X}(x) = \eta_\X\left(x* S_{\hh\G}^{-1}(a)\right), \quad x\in c_{00}(\X), a\in \fO(\hh\G),
\]
if we use the identification $\fO(\hh\G)\cong \ell^1_F(\G)$.
\begin{lemma}\label{Quasiregular Representations Lemma}
    Let $\G$ be a discrete quantum group and $\hh\X$ be  a compact quasi-subgroup of $\hh\G$. If $a\in \fO(\hh\X\backslash\hh\G)$ or $a\in \fO(\hh\G/\hh\X)$ we have,
    \[
    \Gamma_\X(a)\eta_{\X}(P_\X) = \omega_{\hh\X}(a)\eta_{\X}(P_\X)
    \]
\end{lemma}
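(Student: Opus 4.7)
My plan is to reduce the lemma to the statement that under the GNS construction for $\omega_{\hh\X}$ the cyclic vector is an eigenvector of $\Gamma_\X(a)$ with eigenvalue $\omega_{\hh\X}(a)$ whenever $a$ lies in either $\fO(\hh\X\backslash\hh\G)$ or $\fO(\hh\G/\hh\X)$. Writing $\xi_\X := \eta_{\hh\X}(1)\in L^2(\hh\X)$ for the cyclic GNS vector, we have $\Gamma_\X(a)\xi_\X = \eta_{\hh\X}(a)$ and $\omega_{\hh\X}(a) = \langle \Gamma_\X(a)\xi_\X,\xi_\X\rangle$. Under the Plancherel-type unitary identification $L^2(\hh\X)\cong \ell^2(\X)$ recalled in Section~\ref{sec2}, the cyclic vector $\xi_\X$ corresponds precisely to $\eta_\X(P_\X)$; granting this identification, the lemma is equivalent to $\Gamma_\X(a)\xi_\X = \omega_{\hh\X}(a)\xi_\X$ for $a$ in either coideal subalgebra.

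The first substantive step is to show $\omega_{\hh\X}|_{\fO(\hh\X\backslash\hh\G)} = \varepsilon_{\hh\G}|_{\fO(\hh\X\backslash\hh\G)}$. For $a\in \fO(\hh\X\backslash\hh\G)$, apply $\id\otimes\varepsilon_{\hh\G}$ to the defining identity $(q_{\hh\X}\otimes\id)\Delta_{\hh\G}(a) = q_{\hh\X}(1_{\hh\X})\otimes a$. Since $(\id\otimes\varepsilon_{\hh\G})\Delta_{\hh\G} = \id$, this yields $q_{\hh\X}(a) = \varepsilon_{\hh\G}(a)\,q_{\hh\X}(1_{\hh\X})$, and applying $h_{\hh\X}$ gives $\omega_{\hh\X}(a) = \varepsilon_{\hh\G}(a)$, using that $\omega_{\hh\X}$ is unital. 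The analogous equality $\omega_{\hh\X}|_{\fO(\hh\G/\hh\X)} = \varepsilon_{\hh\G}|_{\fO(\hh\G/\hh\X)}$, which is already invoked in the proof of Theorem~\ref{Amenability Theorem for Mp}, follows by the symmetric calculation.

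To conclude, observe that both $\fO(\hh\X\backslash\hh\G)$ and $\fO(\hh\G/\hh\X)$ are unital $*$-subalgebras of $\fO(\hh\G)$, so if $a$ belongs to either of them then so does $a^*a$. Combined with the previous step and the fact that $\varepsilon_{\hh\G}$ is a $*$-homomorphism, we obtain
\[
\omega_{\hh\X}(a^*a) = \varepsilon_{\hh\G}(a^*a) = |\varepsilon_{\hh\G}(a)|^2 = |\omega_{\hh\X}(a)|^2.
\]
A direct expansion in the GNS inner product then gives
\[
\|\Gamma_\X(a)\xi_\X - \omega_{\hh\X}(a)\xi_\X\|^2 = \omega_{\hh\X}(a^*a) - |\omega_{\hh\X}(a)|^2 = 0,
\]
so $\Gamma_\X(a)\xi_\X = \omega_{\hh\X}(a)\xi_\X$. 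Translating this equality through $L^2(\hh\X)\cong \ell^2(\X)$ yields $\Gamma_\X(a)\eta_\X(P_\X) = \omega_{\hh\X}(a)\eta_\X(P_\X)$, as required. I expect the only delicate point to be the identification $\xi_\X\leftrightarrow\eta_\X(P_\X)$, which is a cyclic-vector statement for the Plancherel isomorphism; the remaining calculation is essentially a Cauchy–Schwarz-in-GNS observation that $a$ lies in the multiplicative domain of $\omega_{\hh\X}$.
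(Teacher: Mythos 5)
Your core argument is correct and genuinely different from the paper's: you show that $\omega_{\hh\X}$ restricts to the counit on both $\fO(\hh\X\backslash\hh\G)$ and $\fO(\hh\G/\hh\X)$ (your derivation of this from the defining identity of the coideal subalgebra is fine, and the paper itself uses this fact elsewhere), deduce that these $*$-subalgebras lie in the multiplicative domain of the state $\omega_{\hh\X}$, and conclude by the Cauchy--Schwarz computation $\|\Gamma_\X(a)\xi-\omega_{\hh\X}(a)\xi\|^2=\omega_{\hh\X}(a^*a)-|\omega_{\hh\X}(a)|^2=0$ that the GNS cyclic vector of $\omega_{\hh\X}$ is an eigenvector. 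This is a clean, conceptual route. The paper instead computes directly in $\ell^2(\X)$: using $\Gamma_\X(a)\eta_\X(x)=\eta_\X(x*S_{\hh\G}^{-1}(a))$ together with the identifications $P_\X\ell^1_F(\G)\cong\fO(\hh\X\backslash\hh\G)$, $\ell^1_F(\G)P_\X\cong\fO(\hh\G/\hh\X)$ and the group-like property of $P_\X$, one gets $\Gamma_\X(f)\eta_\X(P_\X)=\eta_\X(P_\X*S_{\hh\G}^{-1}(f))=f(P_\X)\eta_\X(P_\X)$ in one line.

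The genuine gap is the step you yourself flag: the identification of $\eta_\X(P_\X)\in\ell^2(\X)$ with the cyclic vector $\eta_{\hh\X}(1)\in L^2(\hh\X)$ under the Plancherel unitary. Everything in your argument that is specific to the vector $\eta_\X(P_\X)$ --- which is what the lemma is actually about --- is concentrated in this unproved assertion; without it you have only proved an eigenvector statement for the abstract GNS vector of $\omega_{\hh\X}$. The identification is not recorded in the paper's preliminaries (the paper only fixes a unitary $L^2(\hh\X)\cong\ell^2(\X)$ intertwining the relevant structures, without saying where the cyclic vector goes), so it cannot simply be cited. What your argument really needs is the pair of facts $\psi_\X(P_\X)=1$ and $\langle\Gamma_\X(a)\eta_\X(P_\X),\eta_\X(P_\X)\rangle=\omega_{\hh\X}(a)$ for all $a\in\fO(\hh\G)$, i.e.\ $\psi_\X\bigl(P_\X^*\,(P_\X*S_{\hh\G}^{-1}(a))\bigr)=\omega_{\hh\X}(a)$. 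Verifying this requires exactly the ingredients of the paper's proof (the explicit action of $\Gamma_\X$ on $\ell^2(\X)$, the invariance of $\psi_\X$, and the group-like property of $P_\X$), and is not obviously easier than proving the lemma directly. So the proposal as written is incomplete: the reduction is valid, but the remaining step carries the full weight of the lemma and must be supplied.
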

\begin{proof}
    Recall that $P_\X \ell^1_F(\G)\cong \fO(\hh\X\backslash\hh\G)$, and because $\fO(\hh\X\backslash\hh\G)$ is $\tau$-invariant, $S_{\hh\G}(\fO(\hh\X\backslash\hh\G)) = \fO(\hh\G/\hh\X) \cong \ell^1_F(\G)P_\X$. Then, for $f\in P_\X \ell^1(\G)$,
    \[
    \Gamma_\X(f)\eta_\X(P_\X) = \eta_\X( P_\X * S_{\hh\G}^{-1}(f) ) = f(P_\X)\eta_\X(P_\X).
    \]
    In particular $\Gamma_\X(a)\eta_\X(P_\X) = \omega_{\hh\X}(a)\eta_\X(P_\X)$ for every $a\in \fO(\hh\X\backslash\hh\G)$. The proof for $\fO(\hh\G/\hh\X)$ is similar and uses the fact that $\ell^1_F(\G)P_\X\cong \fO(\hh\G/\hh\X)$.
\end{proof}
Recall that for a $\G$-representation $\pi:\fO(\hh\G)\to B(H_\pi)$, the notion of $\pi$-injectivity of a $\G$-operator system $X$ can be found in Proposition \ref{Relative G-properties}. Note that if $X$ is a $\G$-invariant subspace of $B(H_\pi)$, then $\pi$-injectivity of $X$ is equivalent to the existence of a $\G$-equivariant  u.c.p. projection $E:B(H_\pi)\to X$.
\begin{proposition}\label{Injectivity of H_l}
    Let $\G$ be a discrete quantum group  and $\hh\X$ a compact quasi-subgroup of $\hh\G$. There exists a $\G$-equivariant u.c.p. projection $B(\ell^2(\X))\to \ell^\infty(\X)$. In particular, $\ell^\infty(\X)$ is $\Gamma_\X$-injective and there exists a $\G$-equivariant u.c.p. embedding $\B_{\Gamma_\X}\subseteq \ell^\infty(\X)$.
\end{proposition}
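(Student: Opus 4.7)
The plan is to construct an explicit $\G$-equivariant u.c.p.\ projection $E\colon B(\ell^2(\X)) \to \ell^\infty(\X)$; once this is in hand, both ``In particular'' assertions follow by standard categorical maneuvers. For $\Gamma_\X$-injectivity, given any extension data $X\subseteq Y\subseteq B(\ell^2(\X))$ and $\G$-equivariant u.c.p.\ map $\Psi$ as in Proposition~\ref{Relative G-properties}(3) with $\Psi(X)\subseteq \ell^\infty(\X)$, the composite $\widetilde\Psi := E\circ\Psi$ does the job: it agrees with $\Psi$ on $X$ since $E|_{\ell^\infty(\X)}=\id$. For the embedding $\B_{\Gamma_\X}\subseteq\ell^\infty(\X)$, the restriction $E|_{\B_{\Gamma_\X}}\colon \B_{\Gamma_\X}\to\ell^\infty(\X)$ is a $\G$-equivariant u.c.p.\ map, which is automatically completely isometric by the $\Gamma_\X$-essentiality of $\B_{\Gamma_\X}$ (Proposition~\ref{Relative G-properties}(2) applied with $\pi=\Gamma_\X$).

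My candidate is the Poisson-type transform
\[
E(T):=(\id\otimes\omega_\xi)\,\mathrm{ad}_{\Gamma_\X}(T),\qquad T\in B(\ell^2(\X)),
\]
with $\xi:=\eta_\X(P_\X)/\|\eta_\X(P_\X)\|$, a well-defined unit vector in $\ell^2(\X)$ because $\psi_\X$ is faithful and positive on $c_{00}(\X)\ni P_\X$. This $E$ is visibly u.c.p.\ with range in $\ell^\infty(\G)$, and the action axiom $(\Delta_\G\otimes\id)\circ\mathrm{ad}_{\Gamma_\X}=(\id\otimes\mathrm{ad}_{\Gamma_\X})\circ\mathrm{ad}_{\Gamma_\X}$ yields the equivariance relation $\Delta_\G\circ E=(\id\otimes E)\circ\mathrm{ad}_{\Gamma_\X}$. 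To check $E|_{\ell^\infty(\X)}=\id$ I would combine $\mathrm{ad}_{\Gamma_\X}|_{\ell^\infty(\X)}=\Delta_\G|_{\ell^\infty(\X)}$ (stated in the excerpt) with the computation $\omega_\xi(x)=\psi_\X(P_\X x P_\X)/\psi_\X(P_\X)=\varepsilon_\G(x)$ for $x\in\ell^\infty(\X)$, which uses $P_\X x=\varepsilon_\G(x)P_\X$ and $P_\X^2=P_\X$; counitarity then gives $E(x)=(\id\otimes\varepsilon_\G)\Delta_\G(x)=x$.

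The step I anticipate to be the main obstacle is verifying $E(B(\ell^2(\X)))\subseteq \ell^\infty(\X)$. Combining the characterisation $\ell^\infty(\X)=\{y\in\ell^\infty(\G): f*y=f(1)y \text{ for every } f\in\ell^1(\G)P_\X\}$ extracted from~\eqref{linftyX} with the Stationarity Lemma~\ref{Stationarity Lemma} applied to the $\G$-\cst-algebra $B(\ell^2(\X))$, this reduces to the invariance $\ell^1(\G)P_\X\subseteq\Fix(\omega_\xi)$. The plan for this is to expand $(f\otimes\omega_\xi)\mathrm{ad}_{\Gamma_\X}(z)=(g\otimes\omega_\xi)\bigl[U_{\Gamma_\X}^*(1\otimes z)\,U_{\Gamma_\X}(P_\X\otimes 1)\bigr]$ for $f=gP_\X$, and then to substitute the structural identity $U_{\Gamma_\X}(P_\X\otimes 1)=(\id\otimes\Gamma_\X\circ E^u_{\omega_{\hh\X}})(\wW_\G)$ (obtained by applying $\id\otimes\Gamma_\X$ to $\wW_\G(P_\X\otimes 1)=(\id\otimes E^u_{\omega_{\hh\X}})(\wW_\G)$, recalled in the proof of Theorem~\ref{GInjective Implies Nuclear}). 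Since $E^u_{\omega_{\hh\X}}$ takes values in $\fO(\hh\X\backslash\hh\G)$, Lemma~\ref{Quasiregular Representations Lemma} collapses the inner $\Gamma_\X$-slice against $\xi$ to scalars given by $\omega_{\hh\X}$, and together with $P_\X=(\id\otimes\omega_{\hh\X})(\wW_\G)$ and the idempotency of $\omega_{\hh\X}$ the expression should telescope to $f(1)\omega_\xi(z)$; the clean auxiliary identity driving this collapse is $U_{\Gamma_\X}(P_\X\eta\otimes\xi)=P_\X\eta\otimes\xi$ for every $\eta\in\ell^2(\G)$. If controlling the off-diagonal cross terms directly proves delicate, my fallback is first to verify only the weaker identity $P_\X E(T)P_\X=\omega_\xi(T)P_\X$, placing $E(T)$ in $M_{P_\X}\supseteq\ell^\infty(\X)$, and then to upgrade using the $\G$-equivariance of $E$ together with $\mathrm{ad}_{\Gamma_\X}(\ell^\infty(\X))\subseteq\ell^\infty(\G)\vtp\ell^\infty(\X)$.
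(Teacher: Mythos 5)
Your proposal is correct and follows essentially the same route as the paper: the map you construct is exactly the Poisson transform $\fP_{\omega_{P_\X}}$ of the vector state at $\eta_\X(P_\X)$ used there, and the key containment $E(B(\ell^2(\X)))\subseteq\ell^\infty(\X)$ is obtained by the same mechanism, namely the invariance $\ell^1(\G)P_\X\subseteq\Fix(\omega_{P_\X})$ derived from Lemma~\ref{Quasiregular Representations Lemma} and the identity $(\id\otimes\omega_{\hh\X})(\wW_\G)=P_\X$. The only cosmetic difference is that you conclude via the Stationarity Lemma~\ref{Stationarity Lemma} where the paper carries out the coassociativity computation by hand.
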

\begin{proof}
    Define the functional  $\omega_{P_\X}(T) =\langle T\eta_{\X}(P_\X),\eta_\X(P_\X) \rangle$, $T\in B(\ell^2(\X))$.
    
   Note that $\omega_{P_\X}(\Gamma_\X(1)) = \omega_{\hh\X}(1) = 1$, so,  $\omega_{\PG_\X}$ is a state. We claim that the Poisson transform $\fP_{\omega_{P_\X}} : B(\ell^2(\X))\to \ell^\infty(\G)$ is a $\G$-equivariant u.c.p. projection onto $\ell^\infty(\X)$. It is well-known that the Poisson transform of a state is a $\G$-equivariant u.c.p. map, so it is enough to   check that $\fP_{\omega_{P_\X}}$ is a projection onto $\ell^\infty(\X)$.
    Since for all $x\in\ell^\infty(\X)$,  $(1\otimes P_\X)\Delta_\G(x) = x\otimes P_\X$, we have that
    \[
    \fP_{\omega_{P_\X}}(x) = (\id\otimes \omega_{P_\X})\Delta_\G(x) = x
    \]
    Therefore, the proof will be complete once we show $\fP_{\omega_{P_\X}}(B(\ell^2(\X)))\subseteq \ell^\infty(\X)$. To prove this, let $f = \omega_{\xi}$ for some $\xi\in \ell^2(\G)$ and $T\in B(\ell^2(\X))$. Then, by Lemma \ref{Quasiregular Representations Lemma} and the fact  that $(\id\otimes\omega_{\hh\X})(\wW_\G) = P_\X$, we have
    \begin{align*}
        f (P_\X \fP_{\omega_{P_\X}}(T)) &= \langle (P_\X\otimes 1)U_{\Gamma_\X}^*(1\otimes T)U_{\Gamma_\X}(\xi\otimes P_\X), \xi\otimes P_\X\rangle
        \\
        &= \langle (P_\X\otimes 1)((\id\otimes \Gamma_\X)(\wW_\G))^*(1\otimes T)(\id\otimes \Gamma_\X)(\wW_\G))(\xi\otimes P_\X), \xi\otimes P_\X\rangle
        \\
        &= \langle (P_\X \otimes T)(\xi\otimes P_\X), \xi\otimes P_\X\rangle
        \\
        &= f(P_\X)\omega_{P_\X}(T).
    \end{align*}
    This shows $P_\X\fP_{\omega_{P_\X}}(T) = \omega_{P_\X}(T)P_\X$. Therefore,
    \begin{align*}
        (1\otimes P_\X)\Delta_\G( \fP_{\omega_{P_\X}}(T) ) &= (1\otimes P_\X)(\id\otimes \id\otimes \omega_{P_{\X}})(\Delta_\G\otimes\id)\ad_{\Gamma_\X}(T)
        \\
        &= (1\otimes P_\X)(\id\otimes \id\otimes \omega_{P_{\X}})(\id\otimes \ad_{\Gamma_\X})\ad_{\Gamma_\X}(T)
        \\
        &= (1\otimes P_\X)(\id\otimes \fP_{\omega_{P_\X}})\ad_{\Gamma_\X}(T)
        \\
        &= \fP_{\omega_{P_\X}}(T)\otimes P_\X.
    \end{align*}
    This proves $\fP_{\omega_{P_\X}}(T) \in \ell^\infty(\X)$ as desired.
\end{proof}

We have thus established sufficient material to prove the main theorem of this section.
\begin{theorem}\label{Relative Amenability iff FH-Boundaries Are Equal}
    Let $\G$ be a discrete quantum group and $\hh\X$ be a compact quasi-subgroup of $\hh\G$. We have that $\ell^\infty(\X)$ is relatively amenable if and only if $C(\partial_F\G) = \B_{\Gamma_\X}$.
\end{theorem}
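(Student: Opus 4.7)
The plan is to combine the Furstenberg--Hamana boundary machinery of Section~\ref{sec4} with the Poisson-transform characterisation of relative amenability in Lemma~\ref{Relative Amenability Lemma}, and with the $\G$-equivariant chain $\B_{\Gamma_\X}\subseteq \ell^\infty(\X)\subseteq B(\ell^2(\X))$ provided by Proposition~\ref{Injectivity of H_l}. The overall strategy is, in one direction, to build mutually inverse $\G$-equivariant u.c.p.\ maps between $C(\partial_F\G)$ and $\B_{\Gamma_\X}$ and invoke the two rigidity results, and, in the other direction, to extract a concrete Poisson transform into $\ell^\infty(\X)$ from Proposition~\ref{Injectivity of H_l}.

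For the direction \emph{relative amenability $\Rightarrow$ $C(\partial_F\G)=\B_{\Gamma_\X}$}, Lemma~\ref{Relative Amenability Lemma}(3) furnishes a state $\nu\in S(C(\partial_F\G))$ whose Poisson transform $\fP_\nu:C(\partial_F\G)\to\ell^\infty(\X)\subseteq B(\ell^2(\X))$ is $\G$-equivariant u.c.p. Composing with a minimal $\G$-equivariant u.c.p.\ map $\Phi_0:B(\ell^2(\X))\to B(\ell^2(\X))$ (whose image is by construction $\B_{\Gamma_\X}$), I obtain a $\G$-equivariant u.c.p.\ map $\Phi:=\Phi_0\circ\fP_\nu:C(\partial_F\G)\to\B_{\Gamma_\X}$. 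In the reverse direction, the $\G$-injectivity of $C(\partial_F\G)$ recorded in Remark~\ref{F Boundary}, applied to the $\G$-equivariant unital inclusion $\C\hookrightarrow\B_{\Gamma_\X}$, produces a $\G$-equivariant u.c.p.\ map $\tilde\Phi:\B_{\Gamma_\X}\to C(\partial_F\G)$. Then $\tilde\Phi\circ\Phi$ is a $\G$-equivariant u.c.p.\ self-map of $C(\partial_F\G)$, hence equals $\id$ by $\G$-rigidity, while $\Phi\circ\tilde\Phi$ is a $\G$-equivariant u.c.p.\ self-map of $\B_{\Gamma_\X}$, hence equals $\id$ by the $\Gamma_\X$-rigidity of Proposition~\ref{Relative G-properties}(1). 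Thus $\Phi$ is a $\G$-equivariant unital complete order isomorphism of C*-algebras, and Kadison's theorem upgrades it to a $\G$-equivariant $*$-isomorphism $C(\partial_F\G)\cong\B_{\Gamma_\X}$.

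For the converse, assume a $\G$-equivariant $*$-isomorphism $\rho:C(\partial_F\G)\to\B_{\Gamma_\X}$. Proposition~\ref{Injectivity of H_l} provides the $\G$-equivariant u.c.p.\ projection $\fP_{\omega_{P_\X}}:B(\ell^2(\X))\to\ell^\infty(\X)$; restricting it to the subalgebra $\B_{\Gamma_\X}\subseteq B(\ell^2(\X))$ and precomposing with $\rho$ yields a $\G$-equivariant u.c.p.\ map $\psi:C(\partial_F\G)\to\ell^\infty(\X)\subseteq\ell^\infty(\G)$. By the discussion in Section~\ref{2.4}, any $\G$-equivariant u.c.p.\ map from a $\G$-C*-algebra into $\ell^\infty(\G)$ is automatically the Poisson transform of the state obtained by composing with $\varepsilon_\G$; hence $\psi=\fP_\nu$ for $\nu:=\varepsilon_\G\circ\psi\in S(C(\partial_F\G))$. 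Since $\fP_\nu(C(\partial_F\G))\subseteq\ell^\infty(\X)$, Lemma~\ref{Relative Amenability Lemma}(3)$\Rightarrow$(1) gives relative amenability of $\ell^\infty(\X)$.

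The most delicate step is the forward direction's closure via rigidity: one must verify that $\Phi\circ\tilde\Phi$ indeed lands in $\B_{\Gamma_\X}$ (automatic from the codomain of $\Phi$) so that Proposition~\ref{Relative G-properties}(1) applies on the nose, and then that the resulting complete order isomorphism is genuinely multiplicative, so that ``$=$'' in the statement of the theorem is understood in the category of $\G$-C*-algebras, matching the convention used, for example, for $\B_{\lambda_\G}=C(\partial_F\G)$ in Remark~\ref{F Boundary}. Modulo this bookkeeping, the argument is essentially a direct assembly of the Poisson-transform criterion of Lemma~\ref{Relative Amenability Lemma}, the explicit $\G$-equivariant projection built in Proposition~\ref{Injectivity of H_l}, and the rigidity/injectivity package enjoyed by Furstenberg--Hamana boundaries.
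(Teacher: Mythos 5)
Your proof is correct and follows essentially the same route as the paper: both directions rest on Lemma~\ref{Relative Amenability Lemma}, Proposition~\ref{Injectivity of H_l}, and the rigidity/essentiality/injectivity package of the Furstenberg--Hamana boundary, and your converse is identical to the paper's argument. The only (harmless) variation is in closing the forward direction: the paper first deduces $\B_{\Gamma_\X}\subseteq C(\partial_F\G)$ from minimality of $\Phi_0$ and then upgrades a $\G$-equivariant u.c.p.\ projection $C(\partial_F\G)\to\B_{\Gamma_\X}$ to an equality via $\G$-essentiality, whereas you build a mutually inverse pair and invoke rigidity of each object, which yields the identification as a $\G$-equivariant $*$-isomorphism rather than a literal equality inside $B(\ell^2(\X))$ --- an acceptable reading since $\B_{\Gamma_\X}$ is only defined up to isomorphism in the category of $\G$-\cst-algebras.
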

\begin{proof}
    Assume $\ell^\infty(\X)$ is relatively amenable. By Lemma \ref{Relative Amenability Lemma}, there exists $\nu\in C(\partial_F\G)^*$ such that  $\fP_\nu :C(\partial_F\G)\to \ell^\infty(\X)$. The Poisson transform $\fP_\nu$ is a u.c.p. $\G$-equivariant map which is completely isometric by $\G$-essentiality of $C(\partial_F\G)$. By $\G$-injectivity and $\G$-rigidity, there is a $\G$-equivariant u.c.p. projection $B(\ell^2(\X)) \to C(\partial_F\G)$. Therefore, by construction of $\B_{\Gamma_\X}$,
    $$\B_{\Gamma_\X}\subseteq C(\partial_F\G).$$
    On the other hand, any $\G$-equivariant u.c.p. projection $B(\ell^2(\X))\to \B_{\Gamma_\X}$ restricts to a $\G$-equivariant u.c.p. projection $C(\partial_F\G)\to \B_{\Gamma_\X}$. By $\G$-essentiality, such a projection must be completely isometric and hence $C(\partial_F\G) = \B_{\Gamma_\X}$.
    
    Conversely, if $\B_{\Gamma_{\X}} = C(\partial_F\G)$ then, using Proposition \ref{Injectivity of H_l}, there is a $\G$-equivariant u.c.p. embedding $C(\partial_F\G)\subseteq \ell^\infty(\X)$ and we apply Lemma \ref{Relative Amenability Lemma}.
\end{proof}

As an application, we obtain the following corollary. Its conclusion is weaker than that of Theorem \ref{Coamenability Implies Amenability: CQSs}, however, it is interesting because the proof only requires an appropriate notion of a quasi-regular representation and $\Gamma_\X$-injectivity of $\ell^\infty(\X)$.
\begin{corollary}\label{Coamenability Implies Relative Amenability}
    Let $\G$ be a discrete quantum group and $\hh\X$ be a compact quasi-subgroup of $\hh\G$. If $\hh\X\backslash\hh\G$ is coamenable then $\ell^\infty(\X)$ is relatively amenable.
\end{corollary}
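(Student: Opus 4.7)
The plan is to chain together several results already at our disposal, following the hint in the remark after the statement that only a notion of quasi-regular representation and $\Gamma_\X$-injectivity of $\ell^\infty(\X)$ are needed. The key conceptual point is that coamenability of $\hh\X\backslash\hh\G$ has been reformulated (by the authors in \cite{AS21(1)}*{Theorem 4.10}, as recalled at the end of Section~\ref{sec2}) as the weak containment $\Gamma_\X \prec \lambda_\G$ of $\G$-representations.

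First, from the weak containment $\Gamma_\X \prec \lambda_\G$ I would invoke Proposition~\ref{Categorical Property of FH Boundaries} to obtain a $\G$-equivariant u.c.p.\ map
\[
\Phi : C(\partial_F\G) = \B_{\lambda_\G} \longrightarrow \B_{\Gamma_\X}.
\]
Next, Proposition~\ref{Injectivity of H_l} provides a $\G$-equivariant u.c.p.\ embedding $\iota : \B_{\Gamma_\X} \hookrightarrow \ell^\infty(\X)$. Composing, $\Psi := \iota \circ \Phi : C(\partial_F\G) \to \ell^\infty(\X) \subseteq \ell^\infty(\G)$ is a $\G$-equivariant u.c.p.\ map whose image lies in $\ell^\infty(\X)$.

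Finally, any $\G$-equivariant u.c.p.\ map from a $\G$-\cst-algebra into $\ell^\infty(\G)$ is a Poisson transform; explicitly, setting $\nu = \varepsilon_\G \circ \Psi \in S(C(\partial_F\G))$ one has $\Psi = \fP_\nu$, so that $\fP_\nu(C(\partial_F\G)) \subseteq \ell^\infty(\X)$. This is precisely condition (3) of Lemma~\ref{Relative Amenability Lemma}, which therefore yields relative amenability of $\ell^\infty(\X)$.

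The only step requiring care is verifying that the composition really takes values in $\ell^\infty(\X)$ (not merely in a copy of $\B_{\Gamma_\X}$ that a priori sits in $B(\ell^2(\X))$), but this is exactly what Proposition~\ref{Injectivity of H_l} guarantees: the embedding $\B_{\Gamma_\X} \subseteq \ell^\infty(\X)$ is provided by restricting the $\G$-equivariant u.c.p.\ projection $B(\ell^2(\X)) \to \ell^\infty(\X)$. Everything else is bookkeeping with previously established equivariant maps, so I do not anticipate a genuine obstacle; the proof is a clean concatenation of Section~\ref{sec4}'s tools.
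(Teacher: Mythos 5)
Your proof is correct and follows essentially the same route as the paper: both start from the reformulation of coamenability as $\Gamma_\X\prec\lambda_\G$, apply Proposition~\ref{Categorical Property of FH Boundaries} to get a $\G$-equivariant u.c.p.\ map $C(\partial_F\G)\to\B_{\Gamma_\X}$, and then use Proposition~\ref{Injectivity of H_l} together with Lemma~\ref{Relative Amenability Lemma}. The only cosmetic difference is that the paper first upgrades the map to the equality $C(\partial_F\G)=\B_{\Gamma_\X}$ and cites Theorem~\ref{Relative Amenability iff FH-Boundaries Are Equal}, whereas you inline that theorem's backward direction by composing directly into $\ell^\infty(\X)$ and invoking criterion (3) of Lemma~\ref{Relative Amenability Lemma}.
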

\begin{proof}
    Because of Proposition \ref{Categorical Property of FH Boundaries}, there exists a $\G$-equivariant u.c.p. map $C(\partial_F\G)=\B_{\lambda_\G}\to \B_{\Gamma_\X}$ which is completely isometric by $\G$-essentiality. By $\G$-injectivity and $\G$-rigidity, there is a $\G$-equivariant u.c.p. projection $\B_{\Gamma_\X} \to C(\partial_F\G)$. By construction, $\B_{\Gamma_\X}\subseteq C(\partial_F\G)$ and hence $C(\partial_F\G) = \B_{\Gamma_\X}$. Now apply Theorem \ref{Relative Amenability iff FH-Boundaries Are Equal}.
\end{proof}

\section{$\hh{\G}$-injectivity of Coideals}\label{sec5}
\subsection{Compact quantum group actions and $\hh{\G}$-injectivity}
\begin{definition}
  Let $\hh\G$ be a compact quantum group and $X$ be an operator system.
    \begin{itemize}
        \item We say $X$ is a (right) { $\hh\G$-\cst-operator system} if there is a unital completely isometric (u.c.i) map $\alpha_X : X\to X\otimes_{sp} C(\hh\G)$ such that $(\id\otimes\Delta_{\hh\G})\circ\alpha_X = (\alpha_X\otimes\id)\circ\alpha_X$ and
    $$\left[\alpha_X(X)\left(1\otimes C(\hh\G)\right) \right]= X {\otimes}_{sp} C(\hh\G).$$
    If $X = A$ is a unital \cst-algebra, then $\alpha_A$ is a $*$-homomorphism and we say $A$ is a { $\hh\G$-\cst-algebra}.

    \item We say $X$ is a { $\hh\G$-$W^*$-operator system} if there is a unital completely isometric (u.c.i) map $\alpha_X : X\to X\otimes_{\fF} L^\infty(\hh\G)$ such that $(\id\otimes\Delta_{\hh\G})\circ\alpha_X = (\alpha_X\otimes\id)\circ\alpha_X$.
    
    If $X = M$ is a von Neumann algebra, then we say $M$ is a { $\hh\G$-$W^*$-algebra}.
    \end{itemize}
    In either setting, a u.c.p. map $\Phi : X\to Y$ is called { $\hh\G$-equivariant} if $\alpha_Y\circ\Phi = (\Phi\otimes\id)\circ\alpha_X$.
\end{definition}
We will now discuss our main examples.

Let $\fO(\hh\X\backslash\hh\G)\subseteq \fO(\hh\G)$ be a coideal. Then its associated \cst-algebra $C(\hh\X\backslash\hh\G)$ and von Neumann algebra $L^\infty(\hh\X\backslash\hh\G)$ are respectively, a $\hh\G$-\cst-algebra and a $\hh\G$-$W^*$-algebra, where the actions are given by restriction of the comultiplication of ${\hh\G}$.

Let $A$ be a (left) $\G$-\cst-algebra with an action $\alpha$.
Define the reduced crossed product
$$\crosGAr := \overline{\Span}\{\alpha(x)(a\otimes 1) : x\in A, a\in C(\hh\G)\} \subseteq B(\ell^2(\G))\otimes A$$
and the Fubini crossed product
$$\crosGAF := \{Z\in B(\ell^2(\G))\otimes_{\fF} A : \left(\Delta^r_\G\otimes\id\right)(Z) = \left(\id\otimes \alpha\right)(Z)\}$$
where $\Delta^r_\G : B(\ell^2(\G))\to B(\ell^2(\G))\otimes \ell^\infty(\G)$ is the action defined by setting $\Delta^r_\G(T) = \vv_\G(T\otimes 1)\vv_\G^*$. It is clear  that
$L^\infty(\hh\G)\otimes 1, ~\alpha(A),$ and $\crosGAr$ are subspaces of  $\crosGAF$.
It turns out that $\crosGAr$  is a (right) $\hh\G$-\cst-algebra and $\crosGAF$ is a  (right) $\hh\G$-$W^*$-algebra  via the restriction of the following action \cite{dRH23}*{Theorem 3.37}
$$\hh\alpha: B(\ell^2(\G))\otimes_{\fF} A \to B(\ell^2(\G))\otimes_{\fF} A \otimes_{\fF} L^\infty(\hh\G), ~ \hh\alpha(Z) = (\vv_{\hh\G})_{13}Z_{12}(\vv_{\hh\G})^*_{13},$$
where $\vv_{\hh\G} \in \ell^\infty(\G)'\vtp L^\infty(\hh\G)$ is the right fundamental unitary of $\hh\G$. Moreover, we have
\begin{itemize}
    \item $\hh\alpha(\alpha(x)) = \alpha(x)_{12}$, $x\in A$;
    \item $\hh\alpha(a\otimes 1) = \left(\Delta_{\hh\G}(a)\right)_{13}$, $a\in L^\infty(\hh\G)$.
\end{itemize}
\begin{remark}
    Note that in \cite{dRH23} different conventions for duality and actions are being used. In particular, we are considering left (right) variants of their notions of $\G$-actions ($\hh\G$-actions) respectively and hence of their corresponding results.
\end{remark}

Let $\G$ be a discrete quantum group and $\hh\X$ be a compact quasi-subgroup of $\hh\G$. Let $L^2(\hh\X\backslash\hh\G) = L^2(L^\infty(\hh\X\backslash\hh\G), h_{\hh\G})$ be the subspace of $L^2(\hh\G)$ generated by $L^\infty(\hh\X\backslash\hh\G)$ via the GNS construction of $h_{\hh\G}$. It turns out that
$$P_{\X}L^2(\hh\G) = L^2(\hh\X\backslash\hh\G), ~ P_\X\eta_{\hh\G}(x) =(\omega_{\hh\X}\otimes\id)(\Ww_{\hh\G})\eta_{\hh\G}(x) = \eta_{\hh\G}(E_{\omega_{\hh\X}}(x)).$$
Consider the von Neumann algebra $P_\X B(L^2(\hh\G)) P_\X$ in $B(L^2(\hh\X\backslash\hh\G))$.
The restriction $\Delta^r_{\hh\G}|_{P_\X  B(L^2(\hh\G))P_\X}$ is an action of $\hh \G$ on the von Neumann algebra $P_\X B(L^2(\hh\G))P_\X$, implemented by $\vv_{\hh\G}\in \ell^\infty(\G)^\prime\vtp L^\infty(\hh\G)$. Indeed, since $P_\X\in \ell^\infty(\G)$, $$\Delta^r_{\hh\G}(P_\X(\cdot)P_\X) =\vv_{\hh\G}(P_\X(\cdot)P_\X)\vv_{\hh\G}^*= (P_\X\otimes 1)\Delta^r_{\hh\G}(\cdot)(P_\X\otimes 1).$$

More generally, given any $\hh\G$-$W^*$-operator system $X\subseteq B(L^2(\hh\G))$ where the action of $\hh\G$ on X is given by  the restriction of $\Delta^r_{\hh\G}$, the operator system $P_\X X P_\X \subseteq P_\X B(L^2(\hh\G)) P_\X$ is also a $\hh\G$-$W^*$-operator system.

\begin{remark}
    Note that the embedding $P_\X B(L^2(\hh\G))P_\X \subseteq B(L^2(\hh\G))$ is not necessarily unital. In particular, we cannot identify $P_\X B(L^2(\hh\G))P_\X$ as a $\hh\G$-$W^*$-subalgebra of $B(L^2(\hh\G))$.
\end{remark}

With the same observations, the map
\begin{align}\label{Projection PH}
    B(L^2(\hh\G))\to P_\X B(L^2(\hh\X\backslash\hh\G))P_\X, ~ T\mapsto P_\X TP_\X
\end{align}
is a $\hh\G$-equivariant u.c.p. surjection. Furthermore, because $P_\X x = x P_\X$ for all $x\in L^\infty(\hh\X\backslash\hh\G)$ and $x P_\X = 0 \implies x = 0$, the map
\begin{align}\label{Isomorphism H}
    L^\infty(\hh\X\backslash\hh\G)\to P_\X B(L^2(\hh\G))P_\X, ~ x\mapsto x P_\X
\end{align}
is a normal $\hh\G$-equivariant $*$-isomorphism onto its image.

Also, given $x,y\in L^\infty(\hh\G)$,
\begin{align*}
    P_\X x P_\X \eta_{\hh\G}(y) &= \eta_{\hh\G}(E_{\omega_{\hh\X}}(xE_{\omega_{\hh\X}}(y))) = E_{\omega_{\hh\X}}(x)P_\X\eta_{\hh\G}(y),
\end{align*}
which shows that $P_\X L^\infty(\hh\G) P_\X= L^\infty(\hh\X\backslash\hh\G)P_\X$.

To conclude this subsection, we will prove a result that comprises the main ingredient of the proof of the main result of this section. It is also potentially of independent interest as it is stated for coideals associated with arbitrary compact quasi-subgroups.
\begin{lemma}\label{Commutant Duality Lemma}
    Let $\G$ be a discrete quantum group and $\hh\X$ be a compact quasi-subgroup of $\hh\G$. We have that
    \begin{align*}
        P_\X L^\infty(\hh\G)' P_\X &=  {\{ (\id\otimes \mu)((P_\X\otimes P_\X)\vv_\G(P_\X\otimes 1)) : \mu\in B(\ell^2(\G))_* \}}^{\text{weak}^*-\text{closure}}.
    \end{align*}
    Also, $(P_\X L^\infty(\hh\G)' P_\X)' = L^\infty(\hh\X\backslash\hh\G)P_\X$.
\end{lemma}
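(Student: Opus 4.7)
The plan is to prove Part 1 by reducing to a ``right-handed'' group-like identity for $P_\X$, and to prove Part 2 by identifying $L^\infty(\hh\X\backslash\hh\G)P_\X$ with a standard-form representation and invoking modular theory.

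For Part 1, since $\vv_\G \in L^\infty(\hh\G)'\vtp L^\infty(\G)$ and its second-leg slices weak$^*$-generate $L^\infty(\hh\G)'$, compression by $P_\X$ immediately gives
\[
P_\X L^\infty(\hh\G)'P_\X = \overline{\operatorname{span}}^{w^*}\{(\id\otimes\mu)((P_\X\otimes 1)\vv_\G(P_\X\otimes 1)):\mu\in B(\ell^2(\G))_*\}.
\]
It therefore suffices to establish the operator identity
\[
(P_\X\otimes 1)\vv_\G(P_\X\otimes 1) = (P_\X\otimes P_\X)\vv_\G(P_\X\otimes 1).
\]
Using $\vv_\G(P_\X\otimes 1) = \Delta_\G(P_\X)\vv_\G$, which comes from $\Delta_\G(x)=\vv_\G(x\otimes 1)\vv_\G^*$, this reduces to $(P_\X\otimes 1)\Delta_\G(P_\X) = (P_\X\otimes P_\X)\Delta_\G(P_\X)$, which in turn follows from the right-handed group-like identity $(P_\X\otimes 1)\Delta_\G(P_\X) = P_\X\otimes P_\X$. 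I would obtain this by applying $\sigma\circ(R_\G\otimes R_\G)$ to the adjoint of the given left-handed identity $\Delta_\G(P_\X)(1\otimes P_\X) = P_\X\otimes P_\X$, using the intertwining $\sigma\circ(R_\G\otimes R_\G)\circ\Delta_\G = \Delta_\G\circ R_\G$ together with $R_\G(P_\X)=P_\X$ (which is built into the compact quasi-subgroup hypothesis).

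For Part 2, the inclusion $\supseteq$ is a direct verification: for $x\in L^\infty(\hh\X\backslash\hh\G)$, the bimodule property of the conditional expectation $E_{\omega_{\hh\X}}$ yields both $P_\X x P_\X = xP_\X$ and $P_\X x P_\X = P_\X x$, so $x$ commutes with $P_\X$; combined with $[x,T]=0$ for $T\in L^\infty(\hh\G)'$, this gives commutation of $xP_\X$ with $P_\X T P_\X$. For the reverse inclusion, the key observation is that $L^\infty(\hh\X\backslash\hh\G)P_\X$ acting on $P_\X\ell^2(\G)=L^2(\hh\X\backslash\hh\G)$ via $x\mapsto xP_\X$ (with cyclic vector $P_\X\eta_{\hh\G}(1)$) is the standard-form representation of $L^\infty(\hh\X\backslash\hh\G)$, so that its commutant equals $J_0 L^\infty(\hh\X\backslash\hh\G)J_0$ for the associated modular conjugation $J_0$. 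The task then reduces to showing
\[
P_\X L^\infty(\hh\G)'P_\X = J_0 L^\infty(\hh\X\backslash\hh\G) J_0.
\]

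I would close this via Takesaki's theorem. Since $\hh\X$ is a compact quasi-subgroup, $L^\infty(\hh\X\backslash\hh\G)$ is $\tau_\G$-invariant and hence invariant under the modular automorphism group of $h_{\hh\G}$; consequently the modular conjugation $J$ of $L^\infty(\hh\G)$ on $L^2(\hh\G)$ commutes with $P_\X$ and restricts to $J_0$ on $P_\X\ell^2(\G)$. Combining $L^\infty(\hh\G)' = JL^\infty(\hh\G)J$ with the already-established equality $P_\X L^\infty(\hh\G)P_\X = L^\infty(\hh\X\backslash\hh\G)P_\X$ then yields
\[
P_\X L^\infty(\hh\G)'P_\X = JP_\X L^\infty(\hh\G)P_\X J = J L^\infty(\hh\X\backslash\hh\G)P_\X J = J_0 L^\infty(\hh\X\backslash\hh\G) J_0,
\]
finishing the argument. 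The main obstacle I anticipate is confirming that Takesaki's theorem applies in the possibly non-Kac setting, which reduces to verifying $\omega_{\hh\X}\circ \sigma^{h_{\hh\G}}_t = \omega_{\hh\X}$ from the $\tau$-invariance of $\omega_{\hh\X}$ via the Woronowicz relation between $\tau_\G$ and $\sigma^{h_{\hh\G}}$; once this is in hand the rest is essentially bookkeeping.
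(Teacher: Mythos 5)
Your proposal is correct, and it tracks the paper's proof closely up to the last step of Part 2, where it takes a genuinely different route. For Part 1 you do essentially what the paper does: both arguments reduce to the right-handed group-like identity $(P_\X\otimes 1)\Delta_\G(P_\X)=P_\X\otimes P_\X$ (the paper gets there by inserting $\vv_\G^*\vv_\G$ and using $\vv_\G(P_\X\otimes 1)\vv_\G^*=\Delta_\G(P_\X)$; your derivation of the right-handed identity from the left-handed one via $\sigma\circ(R_\G\otimes R_\G)$ and $R_\G(P_\X)=P_\X$ is a fine way to supply what the paper simply attributes to ``$P_\X$ is group-like''). For Part 2 both proofs hinge on the same key identity $P_\X L^\infty(\hh\G)'P_\X=\hh{J}\bigl(L^\infty(\hh\X\backslash\hh\G)P_\X\bigr)\hh{J}$, obtained from $L^\infty(\hh\G)'=\hh{J}L^\infty(\hh\G)\hh{J}$, $\hh{J}P_\X=P_\X\hh{J}$ (i.e.\ $R_\G(P_\X)=P_\X$), and $P_\X L^\infty(\hh\G)P_\X=L^\infty(\hh\X\backslash\hh\G)P_\X$. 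From there the paper computes the commutant by rewriting $\hh{J}L^\infty(\hh\X\backslash\hh\G)\hh{J}=L^\infty(\hh\G)'\cap\{P_\X\}'$, applying the commutant lattice identity, and invoking Tomatsu's Lemma~3.3 to identify $P_\X\bigl(L^\infty(\hh\G)\vee\{P_\X\}''\bigr)P_\X$ with $L^\infty(\hh\X\backslash\hh\G)P_\X$; you instead recognize $L^\infty(\hh\X\backslash\hh\G)P_\X$ acting on $P_\X\ell^2(\G)$ with cyclic and separating vector $P_\X\eta_{\hh\G}(1)$ as the standard form and conclude via $M'=J_0MJ_0$. Your route avoids the external lattice lemma but needs Takesaki's theorem to identify $\hh{J}|_{P_\X\ell^2(\G)}$ with $J_0$; the ``main obstacle'' you flag there is in fact immediate, since $E_{\omega_{\hh\X}}=(\omega_{\hh\X}\otimes\id)\circ\Delta_{\hh\G}$ is a normal conditional expectation preserving $h_{\hh\G}$ (by invariance of the Haar state), and Takesaki's theorem then delivers both the $\sigma^{h_{\hh\G}}$-invariance of $L^\infty(\hh\X\backslash\hh\G)$ and the restriction of the modular conjugation without any detour through the Woronowicz relations. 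Your separate verification of the easy inclusion $L^\infty(\hh\X\backslash\hh\G)P_\X\subseteq(P_\X L^\infty(\hh\G)'P_\X)'$ is redundant given that both arguments actually establish equality directly, but it is harmless.
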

\begin{proof}
For the first part, since  
\[
P_\X L^\infty(\hh\G)^\prime P_\X= {\{ (\id\otimes \mu)((P_\X\otimes 1)\vv_\G (P_\X\otimes 1): \mu\in B(\ell^2(\G))_* \}}^{\text{weak}^*-\text{closure}}.
\]
Then using the fact that $P_\X$ is a group-like projection we have
\begin{align*}
    (P_\X\otimes 1)\vv_\G (P_\X\otimes 1)&= (P_\X\otimes 1)\vv_\G (P_\X\otimes 1)(P_\X\otimes 1)\\
    &= (P_\X\otimes 1)\vv_\G (P_\X\otimes 1)\vv_\G^* \vv_\G (P_\X\otimes 1)\\
    &= (P_\X\otimes P_\X)\vv_\G (P_\X\otimes 1)
\end{align*}
For the second part, it is known that $L^\infty(\hh\G)^\prime=\hh{J}L^\infty(\hh\G) \hh{J}$ and $R_\G(P_\X)=\hh{J}P_\X\hh{J}=P_\X$, where $\hh{J}$ is the modular conjugation of $h_{\hh{\G}}$. Therefore, $\hh{J}P_\X=P_\X \hh{J}$ and we have
\begin{equation}\label{comPX}
P_\X L^\infty(\hh\G)' P_\X= \hh{J}\left( P_\X L^\infty(\hh\G) P_\X\right) \hh{J}= \hh{J}\left( L^\infty(\hh\X\backslash\hh\G) P_\X\right) \hh{J}
= P_\X \left( \hh{J} L^\infty(\hh\X\backslash\hh\G) \hh{J}\right)P_\X,
\end{equation}
where in the last equality we use the fact that $L^\infty(\hh\X\backslash\hh\G) =L^\infty(\hh\G)\cap \{P_\X\}^{\prime}$ and $\hh{J}P_\X=P_\X \hh{J}$. In particular, it is easy to observe that $\hh{J} \{P_\X\}^\prime \hh{J}= \{P_\X\}^\prime$ so we have that $\hh{J} L^\infty(\hh\X\backslash\hh\G) \hh{J} = L^\infty(\hh\G)^\prime \cap \{P_\X\}^\prime$.
Using \ref{comPX} and the fact that $P_\X\in \left( \hh{J} L^\infty(\hh\X\backslash\hh\G) \hh{J}\right)^\prime$ to find the commutant of $P_\X L^\infty(\hh\G)' P_\X$ in $P_\X B(L^2(\hh\G)) P_\X$,
\[
\left(P_\X L^\infty(\hh\G)' P_\X\right)^\prime = P_\X \left( \hh{J} L^\infty(\hh\X\backslash\hh\G) \hh{J}\right)^\prime P_\X = P_\X \left( L^\infty(\hh\G) \vee \{P_\X\}^{\prime\prime} \right) P_\X= L^\infty(\hh\X\backslash\hh\G)P_\X,  
\]
where  in the last equality we used \cite{MR2335776}*{Lemma 3.3}.
\end{proof}

\begin{theorem}\label{Projection Lemma}
    Let $\G$ be a discrete quantum group and $\hh\X$ be a compact quasi-subgroup of $\hh\G$.
    \begin{enumerate}
        \item There exists a $\hh\G$-equivariant u.c.p. projection $\crosGBF \to L^\infty(\hh\X\backslash\hh\G)$ onto $L^\infty(\hh\X\backslash\hh\G)$.
        
        \item There exists a $\hh\G$-equivariant u.c.p. projection $\crosGBr \to C(\hh\X\backslash\hh\G)$ onto $C(\hh\X\backslash\hh\G)$.
    \end{enumerate}
\end{theorem}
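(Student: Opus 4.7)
The plan is to construct the projection via a direct slice-and-compress formula. Using the inclusion $\B_{\Gamma_\X} \subseteq B(\ell^2(\X))$ and the vector state $\omega_{P_\X}$ on $B(\ell^2(\X))$ from the proof of Proposition~\ref{Injectivity of H_l}, I would define, for $Z \in \crosGBF$,
\[
\Psi(Z) := P_\X\,(\id \otimes \omega_{P_\X})(Z)\,P_\X \in P_\X B(\ell^2(\G)) P_\X,
\]
and then view $\Psi$ as landing in $L^\infty(\hh\X\backslash\hh\G)$ via the $\hh\G$-equivariant $*$-isomorphism $x \mapsto xP_\X$ onto $L^\infty(\hh\X\backslash\hh\G)P_\X$ recorded just before Lemma~\ref{Commutant Duality Lemma}.

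First I would compute $\Psi$ on basic reduced-crossed-product elements $Z = (a \otimes 1)\alpha_{\Gamma_\X}(b)$ with $a \in C(\hh\G)$ and $b \in \B_{\Gamma_\X}$. The slice yields $(\id \otimes \omega_{P_\X})(Z) = a\,\fP_{\omega_{P_\X}}(b)$, and $\fP_{\omega_{P_\X}}(b) \in \ell^\infty(\X)$ by Proposition~\ref{Injectivity of H_l}. Combining $\varepsilon_\G \circ \fP_{\omega_{P_\X}} = \omega_{P_\X}|_{\B_{\Gamma_\X}}$ with the identity $yP_\X = \varepsilon_\G(y)P_\X$ for $y \in \ell^\infty(\X)$ and the formula $P_\X a P_\X = E_{\omega_{\hh\X}}(a)P_\X$, one gets
\[
\Psi\bigl((a \otimes 1)\alpha_{\Gamma_\X}(b)\bigr) = \omega_{P_\X}(b)\,E_{\omega_{\hh\X}}(a)\,P_\X \in C(\hh\X\backslash\hh\G)P_\X.
\]
Norm continuity then gives Part~(2). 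For Part~(1), $\Psi$ is normal (a normal slice followed by a normal compression), $L^\infty(\hh\X\backslash\hh\G)P_\X$ is weak$^*$-closed, and the standard weak$^*$-density of $\crosGBr$ in $\crosGBF$ propagates the conclusion to all of $\crosGBF$. Unitality, complete positivity, and the projection property $\Psi(x \otimes 1) = xP_\X$ for $x \in L^\infty(\hh\X\backslash\hh\G)$ follow immediately.

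The most delicate verification is $\hh\G$-equivariance. The key observation is that $P_\X \otimes 1 \otimes 1$ commutes with $(\vv_{\hh\G})_{13}$, because $P_\X \in \ell^\infty(\G) = L^\infty(\hh\G)'$ while the first leg of $\vv_{\hh\G}$ lies in $L^\infty(\hh\G)$. Together with the fact that the middle-leg slice $\id \otimes \omega_{P_\X} \otimes \id$ commutes past any operator supported in legs $1$ and $3$, a direct leg-numbering calculation delivers $(\Psi \otimes \id)\hh\alpha(Z) = \Delta^r_{\hh\G}(\Psi(Z))$, which under the isomorphism $xP_\X \leftrightarrow x$ translates into $\Delta_{\hh\G} \circ \Psi = (\Psi \otimes \id)\circ \hh\alpha$. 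I expect the main technical obstacle to be precisely the normality-density argument extending the image property from $\crosGBr$ to $\crosGBF$: it rests on the weak$^*$-density of $\crosGBr$ in $\crosGBF$, which is standard for quantum-group crossed products but merits an explicit justification or citation.
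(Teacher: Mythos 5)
Your construction is, at its core, the same as the paper's: compress the $B(\ell^2(\G))$-leg by $P_\X$ and slice the $\B_{\Gamma_\X}$-leg by a state whose Poisson transform lands in $\ell^\infty(\X)$ (your $\omega_{P_\X}$ is precisely the state supplied by Proposition~\ref{Injectivity of H_l}; the paper writes a general such $\mu$). Your computation on generators, the identification with $L^\infty(\hh\X\backslash\hh\G)$ via $x\mapsto xP_\X$, and the equivariance argument all match, and Part~(2) goes through by norm continuity exactly as in the paper. One small slip: $\ell^\infty(\G)\neq L^\infty(\hh\G)'$ (these algebras do not commute; together they generate $B(\ell^2(\G))$). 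The commutation of $P_\X\otimes 1\otimes 1$ with $(\vv_{\hh\G})_{13}$ is nevertheless correct, because $\vv_{\hh\G}\in \ell^\infty(\G)'\vtp L^\infty(\hh\G)$, so its \emph{first} leg lies in $\ell^\infty(\G)'$ and hence commutes with $P_\X\in\ell^\infty(\G)$.

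The genuine gap is the step you yourself flag in Part~(1): the weak$^*$-density of $\crosGBr$ in $\crosGBF$. This is not a routine density statement. The Fubini crossed product is defined by the fixed-point condition $(\Delta^r_\G\otimes\id)(Z)=(\id\otimes\alpha_{\Gamma_\X})(Z)$ inside $B(\ell^2(\G))\otimes_{\fF}\B_{\Gamma_\X}$, not as a closure of anything, and $\B_{\Gamma_\X}$ is only a norm-closed operator system, so $B(\ell^2(\G))\otimes_{\fF}\B_{\Gamma_\X}$ is not weak$^*$-closed. Identifying $\crosGBF$ inside the weak$^*$-closure of $\crosGBr$ amounts to a Digernes--Takesaki-type commutation theorem for discrete quantum group crossed products (showing the covariant part of $B(\ell^2(\G))\vtp\B_{\Gamma_\X}''$ is the von Neumann crossed product of $\B_{\Gamma_\X}''$); this is true but needs a proof or a precise citation, and your argument collapses without it. The paper's proof is engineered to avoid this entirely: it applies $(1\otimes P_\X)\Delta^r_\G(\cdot)$ to $\Psi_\mu(Z)$ for an \emph{arbitrary} $Z\in\crosGBF$, uses the defining covariance relation to show $(1\otimes P_\X)\Delta^r_\G(\Psi_\mu(Z))=\Psi_\mu(Z)\otimes P_\X$, and then invokes Lemma~\ref{Commutant Duality Lemma} to conclude $\Psi_\mu(Z)\in (P_\X L^\infty(\hh\G)'P_\X)'=L^\infty(\hh\X\backslash\hh\G)P_\X$. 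That direct route is why the Commutant Duality Lemma is set up beforehand; if you want to keep your density-based route, you must either prove the density claim or replace it by the paper's covariance computation.
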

\begin{proof}
    $(1)$ Let $L : B(\ell^2(\G))\to P_\X B(\ell^2(\G)) P_\X$, $Z\mapsto P_\X ZP_\X$ be the $\hh\G$-equivariant u.c.p. projection in  \eqref{Projection PH}. Then $\Psi = L\otimes\id : B(\ell^2(\G))\vtp B(\ell^2(\G))\to P_\X B(\ell^2(\G))P_\X\vtp B(\ell^2(\G))$ is a $\hh\G$-equivariant u.c.p. projection. Choose a state $\mu \in S(\B_{\Gamma_\X})$ such that $\fP_\mu(\B_{\Gamma_\X}) \subseteq \ell^\infty(\X)$, which is possible thanks to Proposition~\ref{Injectivity of H_l}.
    
    It is straightforward to see that any slice map $\id\otimes\mu$ on $B(\ell^2(\G))\otimes_{\fF}\B_{\Gamma_\X}$ is $\hh\G$-equivariant. Hence $\Psi_\mu := (\id\otimes \mu)\circ\Psi|_{\crosGBF}$ is $\hh\G$-equivariant.

    Take $Z\in \crosGBF$ and observe that
    \begin{align*}
        (1\otimes P_\X)\Delta^r_\G(\Psi_\mu(Z)) &= (\id\otimes\id\otimes\mu)
        \left(
        (P_\X\otimes P_\X\otimes 1) \left((\Delta^r_\G\otimes\id)Z\right)
        (P_\X\otimes 1\otimes 1)
        \right)
        \\
        &= (\id\otimes\id\otimes\mu)\left(
        (P_\X\otimes P_\X\otimes 1)
        \left( (\id\otimes\alpha_{\Gamma_\X})Z\right) (P_\X\otimes 1\otimes 1)\right)
        \\
&= (P_\X\otimes 1\otimes 1) \left( (\id\otimes P_\X \otimes \mu) (\id\otimes \alpha_{\Gamma_\X})Z \right)(P_\X\otimes 1\otimes 1)
        \\&=(P_\X\otimes 1) \left( ( 1\otimes P_\X) (\id\otimes \mu)Z \right)(P_\X\otimes 1)
        \\
        &= (\id\otimes\id\otimes \mu)((P_\X\otimes P_\X\otimes 1)(Z(P_\X\otimes 1))_{13})
        \\
        &= \Psi_\mu(Z)\otimes P_\X.
    \end{align*}
    Hence, upon multiplying the above equation by $(P_\X\otimes 1)$ from the left and remembering that $\Psi_\mu(Z)\in P_\X B(\ell^2(\G)) P_\X $ so $\Psi_\mu(Z) P_\X = P_\X\Psi_\mu(Z) = \Psi_\mu(Z)$,
    $$(P_\X\otimes P_\X)V_\G(\Psi_\mu(Z)\otimes 1)V_\G^* = (P_\X\otimes 1)(\Psi_\mu(Z)\otimes P_\X)$$
    and therefore
    $$(P_\X\otimes P_\X) \vv_\G (\Psi_\mu(Z)\otimes 1) = (\Psi_\mu(Z)\otimes P_\X)\vv_\G.$$
    Multiplying the above equation by $(P_\X\otimes 1)$ from the right we have
    \[
(P_\X\otimes P_\X) \vv_\G (\Psi_\mu(Z)\otimes 1) = (\Psi_\mu(Z)\otimes P_\X)(\vv_\G)(P_\X\otimes 1).
    \]
    Then, using Lemma \ref{Commutant Duality Lemma}, $\Psi_\mu(Z)\in (P_\X L^\infty(\hh\G)' P_\X)' = L^\infty(\hh\X\backslash\hh\G)P_\X$.
    
    Also, for $a\in L^\infty(\hh\X\backslash\hh\G)$, $\Psi_\mu(a\otimes 1) = aP_\X$, and we conclude that
    $$L^\infty(\hh\X\backslash\hh\G)P_\X \subseteq \Psi_\mu(\crosGBF)\subseteq P_\X L^\infty(\hh\G) P_\X.$$
    Finally, if we let $\sigma : L^\infty(\hh\X\backslash\hh\G) \to L^\infty(\hh\X\backslash\hh\G)P_\X$ be the restriction of the $\hh\G$-equivariant $*$-isomorphism in \eqref{Isomorphism H},
    $$\sigma^{-1}\circ \Psi_\mu : \crosGBF \to L^\infty(\hh\X\backslash\hh\G), ~ \alpha_{\Gamma_\X}(x)(a\otimes 1)\mapsto \mu(x)E_{\omega_{\hh\X}}(a)$$
    is the desired $\hh\G$-equivariant u.c.p. projection.

    $(2)$ Since $(P_\X\otimes \mu)\alpha_{\Gamma_\X}(x) = P_\X\mu(x)$,
    $$(\id\otimes\mu)\Psi\left(\alpha_{\Gamma_\X}(x)(a\otimes 1)\right) = P_\X a P_\X\mu(x), ~ a\in C(\hh\G), x\in \B_{\Gamma_\X}.$$
    It follows from linearity and density that the restriction
    $\Psi_\mu := (\id\otimes\mu)\Psi |_{\crosGBr}$ is a $\hh\G$-equivariant u.c.p. map onto $P_\X C(\hh\G)P_\X  = C(\hh\X\backslash\hh\G)P_\X$. Now proceed as in the last part of the proof of $(1)$.
\end{proof}
\begin{remark}
    We note that Theorem \ref{Projection Lemma} is new, even for groups.  Let $\G = G$ be a discrete group. Recall $ \fO(\hh \G) = \C[G]$ and $L^\infty(\hh\G)=\mathcal{L}(G)$ is the group von Neumann algebra. Any coideal of $\hh\G$ is of the form $\fO(\hh\X\backslash \hh \G) = \C[H]$ for some subgroup $H\leq G$ and $\hh\X=\widehat{G/H}$ is a compact quasi-subgroup.
    \end{remark}

\subsection{$ \hh{\G}$-$W^*$-injectivity and $\G$-injectivity of coideals}
In this subsection, we will prove that for a compact quasi-subgroup $\hh\X$ of $\hh\G$, $\G$-injectivity of $\ell^\infty(\X)$ implies $\hh\G$-$W^*$-injectivity of $L^\infty(\hh\X\backslash \hh\G)$. Moreover, we have a partial converse where we must replace $\ell^\infty(\X)$ with the $\G$-operator system $M_{P_\X}$ and $\G$-injectivity with relative amenability. This generalizes \cite{dRH23}*{Corollary 4.4} to coideals associated with compact quasi-subgroups.
We note in particular that $\hh\G$-$W^*$-injectivity of a coideal $L^\infty(\hh\X\backslash \hh\G)$ implies $\hh\X$ is a compact quasi-subgroup. This shows how our methods do not apply to cases beyond the compact quasi-subgroups.

\begin{definition} Let $\hh\G$ be a compact quantum group and $X$ be a $\hh\G$-\cst-operator system ($\hh\G$-$W^*$-operator system).
    We say $X$ is { $\hh\G$-\cst-injective ($\hh\G$-$W^*$-injective)} if for every $\hh\G$-\cst-operator systems ($\hh\G$-$W^*$-operator system) $Y_1$ and $Y_2$,  every $\hh\G$-equivariant u.c.p. map $\varphi : Y_1\to X$, and every $\hh\G$-equivariant u.c.i map $\iota : Y_1\to Y_2$, there exists a $\hh\G$-equivariant u.c.p. map $\tilde{\varphi}: Y_2\to X$ such that $\tilde\varphi\circ\iota = \varphi$.
\end{definition}

\begin{proposition}\label{C*Injectivity Implies Relative Amenability}
    Let $\G$ be a discrete quantum group and $\fO(\hh\X\backslash\hh\G)$ be a coideal subalgebra of $\hh\G$. If $C(\hh\X\backslash\hh\G)$ is $\hh\G$-\cst-injective then $\hh\X$ is a compact quasi-subgroup and $M_{P_\X}$ is relatively amenable.
\end{proposition}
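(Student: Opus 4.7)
The plan is to produce, from the hypothesis, a $\hh\G$-equivariant u.c.p.\ projection
\[
E : C(\hh\G) \longrightarrow C(\hh\X\backslash\hh\G)
\]
by applying $\hh\G$-$C^*$-injectivity to $\id_{C(\hh\X\backslash\hh\G)}$ together with the $\hh\G$-equivariant inclusion $C(\hh\X\backslash\hh\G)\hookrightarrow C(\hh\G)$, and then to extract both conclusions from $E$.

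For the quasi-subgroup claim, I would first argue that $E$ restricts to an equivariant algebraic projection $E:\fO(\hh\G)\to\fO(\hh\X\backslash\hh\G)$. The right coaction decomposes $\fO(\hh\G)=\bigoplus_{\alpha\in\Irr(\hh\G)}V_\alpha$ into finite-dimensional isotypic blocks with $\Delta_{\hh\G}(V_\alpha)\subseteq V_\alpha\otimes V_\alpha$, and the equivariance of $E$ forces $E(V_\alpha)\subseteq V_\alpha$; since $E$ takes values in $C(\hh\X\backslash\hh\G)$, we get $E(V_\alpha)\subseteq V_\alpha\cap C(\hh\X\backslash\hh\G)=\fO(\hh\X\backslash\hh\G)_\alpha$ (the last equality via Peter--Weyl isotypic projections restricted to $C(\hh\X\backslash\hh\G)$). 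Setting $\omega := \varepsilon_{\hh\G}\circ E|_{\fO(\hh\G)}$, equivariance yields $E=(\omega\otimes\id)\circ\Delta_{\hh\G}$ on $\fO(\hh\G)$, and $E^2=E$ yields $\omega\ast\omega=\omega$. Since $E$ is u.c.p.\ and $\varepsilon_{\hh\G}$ is a $*$-character, $\omega$ is a positive unital idempotent functional; the uniqueness of the idempotent state associated to the coideal $\fO(\hh\X\backslash\hh\G)$ then forces $\omega=\omega_{\hh\X}$, so $\hh\X$ is a compact quasi-subgroup.

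For relative amenability of $M_{P_\X}$, the quasi-subgroup condition just obtained unlocks Theorem~\ref{Projection Lemma}, which supplies a $\hh\G$-equivariant u.c.p.\ projection $\Psi : \crosGBr\to C(\hh\X\backslash\hh\G)$. A slicing computation against normal functionals shows that relative amenability of $M_{P_\X}$ is equivalent to the existence of a state $m\in S(\ell^\infty(\G))$ satisfying
\[
(P_\X f P_\X)\ast m = f(P_\X)\,m \qquad (f\in\ell^1(\G)),
\]
and the plan is to build such $m$ from $\Psi$ by combining it with a suitable state on $C(\hh\X\backslash\hh\G)$ and transporting the result to $\ell^\infty(\G)=L^\infty(\hh\G)'$ using the identity $P_\X=(\id\otimes\omega_{\hh\X})(\Ww_\G)$ and the compatibility of $\alpha_{\Gamma_\X}$ with $\Delta_\G$ on $\ell^\infty(\X)$.

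The main obstacle is this last transfer: converting the $\hh\G$-equivariance packaged in $\Psi$ into the required $P_\X\ell^1(\G)P_\X$-invariance of a state on $\ell^\infty(\G)$. The technical core parallels the slicing against the multiplicative unitary in the proof of Theorem~\ref{Coamenability Implies Amenability: CQSs}, with $\Psi$ playing the role played there by the cluster state built from the net $(\xi_\alpha)\subseteq P_\X\ell^2(\G)$.
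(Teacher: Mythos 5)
Your opening move coincides with the paper's: $\hh\G$-\cst-injectivity applied to $\id_{C(\hh\X\backslash\hh\G)}$ and the equivariant inclusion $C(\hh\X\backslash\hh\G)\hookrightarrow C(\hh\G)$ produces a $\hh\G$-equivariant u.c.p.\ projection $E:C(\hh\G)\to C(\hh\X\backslash\hh\G)$. But your derivation of the quasi-subgroup property from $E$ breaks at the positivity step. You need $\omega(a^*a)=\varepsilon_{\hh\G}(E(a^*a))\ge 0$, and while $E(a^*a)$ is positive in the \emph{reduced} \cst-algebra and lies in $\fO(\hh\X\backslash\hh\G)$, it is not presented as a sum of squares inside $\fO(\hh\G)$, and $\varepsilon_{\hh\G}$ is not positive on $\fO(\hh\G)\cap C(\hh\G)_+$ unless $\hh\G$ is coamenable. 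Concretely, for $\hh\G=\widehat{\mathbb{F}_2}$ the element $x=\tfrac{\sqrt{3}}{2}\cdot 1-\tfrac14(a+a^{-1}+b+b^{-1})$ is positive in $C^*_r(\mathbb{F}_2)$ by Kesten's norm bound, yet $\varepsilon(x)=\tfrac{\sqrt{3}}{2}-1<0$. So ``$E$ u.c.p.\ and $\varepsilon_{\hh\G}$ a $*$-character'' does not yield positivity of $\omega$; that positivity is essentially the theorem you are trying to prove. The paper sidesteps this: a $\hh\G$-equivariant u.c.p.\ map is automatically $h_{\hh\G}$-preserving (apply $\id\otimes h_{\hh\G}$ to the equivariance identity and use invariance and uniqueness of the Haar state), and the existence of an $h_{\hh\G}$-preserving expectation onto the coideal forces $\hh\X$ to be a compact quasi-subgroup by the nontrivial result \cite{MR3667214}*{Theorem 4.5}.

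For the second assertion your text is a plan rather than a proof: the step you yourself flag as ``the main obstacle'' --- converting equivariance of a projection onto $C(\hh\X\backslash\hh\G)$ into a $P_\X\ell^1(\G)P_\X$-invariant state on $\ell^\infty(\G)$ --- is the entire content of the claim and is left unexecuted. Moreover, routing through Theorem~\ref{Projection Lemma} and $\B_{\Gamma_\X}$ creates a structural problem: to land in $S(\ell^\infty(\G))$ you need a $\G$-equivariant u.c.p.\ map \emph{out of} $\ell^\infty(\G)$ into the coefficient algebra of the crossed product, and $\B_{\Gamma_\X}\subseteq B(\ell^2(\X))$ does not receive such a map in any evident way. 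The paper instead uses the hypothesis a second time: $\hh\G$-\cst-injectivity extends the identity along $C(\hh\X\backslash\hh\G)\subseteq C(\hh\G)\otimes 1\subseteq \G\ltimes_r C(\partial_F\G)$ to a $\hh\G$-equivariant u.c.p.\ projection $\Psi$; since $\alpha_F(C(\partial_F\G))$ lies in the fixed points of the dual action, equivariance forces $\Psi(\alpha_F(C(\partial_F\G)))\subseteq\C$; a multiplicative-domain computation against $\ww_\G$ gives $\Psi(T*(P_\X fP_\X))=f(P_\X)\Psi(T)$; and composing with $\alpha_F$ and a $\G$-equivariant projection $\Phi:\ell^\infty(\G)\to C(\partial_F\G)$ (available because $C(\partial_F\G)$ is $\G$-injective) produces the required state $m=\Psi\circ\alpha_F\circ\Phi$, after which \cite{AS21(1)}*{Proposition 3.8} concludes. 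Your sketch contains neither the scalar-valuedness of $\Psi\circ\alpha_F$ nor the multiplicative-domain computation, and these are the two ideas that make the transfer work.
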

\begin{proof}
    If $C(\hh\X\backslash\hh\G)$ is $\hh\G$-\cst-injective then there exists a $\hh\G$-equivariant u.c.p. projection $C(\hh\G) \to C(\hh\X\backslash\hh\G)$ onto $C(\hh\X\backslash\hh\G)$. A $\hh\G$-equivariant map is easily seen to be $h_{\hh\G}$-preserving, hence $\hh\X$ is a compact quasi-subgroup by \cite{MR3667214}*{Theorem 4.5}.

    There exists a $\hh\G$-equivariant u.c.p. projection $\Psi : \crosGFr \to C(\hh\X\backslash\hh\G)$. The restriction $\Psi|_{\alpha_F(C(\partial_F\G))} : \alpha_F(C(\partial_F\G)) \to C(\hh\X\backslash\hh\G)$ is still $\hh\G$-equivariant and
    $$\Delta_{\hh\G}(\Psi(\alpha_F(a))) = (\Psi\otimes \id)\hat\alpha_F(\alpha_F(a))= (\Psi\otimes \id)(\alpha_F(a)_{12}) =\Psi(\alpha_F(a))\otimes 1, ~ a\in C(\partial_F\G).$$
    Hence $\Psi(\alpha_F(C(\partial_F\G))) \subseteq \C$ by \cite{MR1832993}*{Result 5.13}. Then for $f\in \ell^1(\G)$ and $T\in \crosGFr$,
    \begin{align*}
        &\Psi(((P_\X fP_\X)\otimes\id)\Delta_\G(T))
        \\
        &= (f\otimes \Psi)((P_{\X}\otimes 1)\ww^*_\G(1\otimes T)\ww_\G(P_\X\otimes 1))
        \\
        &= (f\otimes \id)(((P_{\X}\otimes 1)\ww^*_\G)(1\otimes \Psi(T))\ww_\G(P_\X\otimes 1)) ~ \text{(since $C(\hh\X\backslash\hh\G)\subseteq\mathrm{mult}(\Psi)$)}
        \\
        &= ((P_\X f P_\X)\otimes\id)\Delta_\G(\Psi(T))
        \\
        &= \Psi(T)f(P_\X)
    \end{align*}
    where $\mathrm{mult}(\Psi)$ is the multiplicative domain of $\Psi$. Therefore, if we choose any $\G$-equivariant u.c.p. projection $\Phi : \ell^\infty(\G)\to C(\partial_F\G)$, the composition $m = \Psi \circ \alpha_F\circ \Phi$
    satisfies $(P_\X f P_\X)*m = f(P_\X)m$ for $f\in \ell^1(\G)$ and the conclusion follows from \cite{AS21(1)}*{Proposition 3.8}.
\end{proof}

Let $X$ and $Y$ be $\hh\G$-$W^*$-operator systems ($\hh\G$-\cst-operator systems) such that there exists a $\hh\G$-equivariant complete isometry $X\to Y$. Suppose $Y$ is $\hh\G$-$W^*$-injective (respectively $\hh\G$-\cst-injective). It is well-known that $X$ is $\hh\G$-$W^*$-injective (respectively $\hh\G$-\cst-injective) iff there exists a $\hh\G$-equivariant u.c.p. projection $Y\to X$.

\begin{theorem}\label{Relative Amenability and Injectivity Theorem}
    Let $\G$ be a discrete quantum group and $\hh\X$ be a compact quasi-subgroup of $\hh\G$. Consider the following statements
    \begin{enumerate}
        \item $\ell^\infty(\X)$ is $\G$-injective.
        \item $\ell^\infty(\X)$ is relatively amenable.
        \item $L^\infty(\hh\X\backslash\hh\G)$ is $\hh\G$-$W^*$-injective.
        \item $C(\hh\X\backslash\hh\G)$ is $\hh\G$-\cst-injective.
    \end{enumerate}
    Then $(1)\!\Longrightarrow\!(2)\!\Longrightarrow\!(3)\!\Longrightarrow\!(4)$. Moreover, if $\ell^\infty(\X)=M_{P_\X}$ we have $(4)\!\Longrightarrow\!(2)$.
\end{theorem}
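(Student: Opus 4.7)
The implication $(1)\Rightarrow(2)$ is immediate: a $\G$-equivariant u.c.p.\ projection $\ell^\infty(\G)\to\ell^\infty(\X)$ is, in particular, a $\G$-equivariant u.c.p.\ map into $\ell^\infty(\X)$, which is all that is needed for relative amenability.

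For $(2)\Rightarrow(3)$, the plan is to route through the Furstenberg--Hamana boundary and the crossed product. By Theorem~\ref{Relative Amenability iff FH-Boundaries Are Equal}, relative amenability of $\ell^\infty(\X)$ gives the $\G$-equivariant identification $\B_{\Gamma_\X}=C(\partial_F\G)$, which is $\G$-injective by Remark~\ref{F Boundary}. I would then invoke the crossed product duality of \cite{dRH23} to promote $\G$-injectivity of the $\G$-\cst-algebra $\B_{\Gamma_\X}$ (with action $\ad_{\Gamma_\X}$) into $\hh\G$-$W^*$-injectivity of the Fubini crossed product $\crosGBF$. The final step is Theorem~\ref{Projection Lemma}(1), which supplies a $\hh\G$-equivariant u.c.p.\ projection $\crosGBF\to L^\infty(\hh\X\backslash\hh\G)$; since $\hh\G$-$W^*$-injectivity is inherited by $\hh\G$-equivariant u.c.p.\ retracts, this yields $\hh\G$-$W^*$-injectivity of $L^\infty(\hh\X\backslash\hh\G)$.

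For $(3)\Rightarrow(4)$, I plan to run the same scheme at the reduced \cst-level, with Theorem~\ref{Projection Lemma}(2) producing a retract onto $C(\hh\X\backslash\hh\G)$ from the reduced crossed product $\crosGBr$. The required input is the \cst-analog of the duality: $\G$-\cst-injectivity of $\B_{\Gamma_\X}$ should transfer to $\hh\G$-\cst-injectivity of $\crosGBr$. To access this \cst-injectivity from $(3)$, I would first show that $\hh\G$-$W^*$-injectivity of $L^\infty(\hh\X\backslash\hh\G)$ forces $M_{P_\X}$ to be relatively amenable by mimicking the proof of Proposition~\ref{C*Injectivity Implies Relative Amenability} with $B(L^2(\hh\G))$ and the Fubini crossed product in place of $C(\hh\G)$ and $\crosGFr$. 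Relative amenability of $\ell^\infty(\X)\subseteq M_{P_\X}$ (which follows from monotonicity) then gives $\B_{\Gamma_\X}=C(\partial_F\G)$, recovering $\G$-\cst-injectivity of $\B_{\Gamma_\X}$ and closing the loop.

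The implication $(4)\Rightarrow(2)$ under $\ell^\infty(\X)=M_{P_\X}$ is immediate from Proposition~\ref{C*Injectivity Implies Relative Amenability}: $\hh\G$-\cst-injectivity of $C(\hh\X\backslash\hh\G)$ forces $M_{P_\X}$ to be relatively amenable, which under the standing hypothesis is precisely relative amenability of $\ell^\infty(\X)$. The main obstacle across the chain will be verifying that the crossed product duality of \cite{dRH23}, together with its reduced \cst-version, applies directly to the compact quasi-subgroup setting through the representation $\Gamma_\X$; the key compatibility is that Theorem~\ref{Projection Lemma} is designed to interface exactly with these crossed products, so the inheritance of injectivity along the retracts should be a formality once the duality step is in place.
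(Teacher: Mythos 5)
Your steps $(1)\Rightarrow(2)$, $(2)\Rightarrow(3)$ and $(4)\Rightarrow(2)$ match the paper's argument: the first is the trivial observation, the second is exactly the chain $\B_{\Gamma_\X}=C(\partial_F\G)$ (Theorem~\ref{Relative Amenability iff FH-Boundaries Are Equal}), $\hh\G$-$W^*$-injectivity of $\crosGBF$ via \cite{dRH23}*{Theorem 4.3}, and the retract from Theorem~\ref{Projection Lemma}(1); the last is Proposition~\ref{C*Injectivity Implies Relative Amenability} combined with the standing hypothesis $\ell^\infty(\X)=M_{P_\X}$.

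Your route for $(3)\Rightarrow(4)$, however, contains a genuine gap. You propose to recover relative amenability of $\ell^\infty(\X)$ from $(3)$ by first showing that $M_{P_\X}$ is relatively amenable and then passing to $\ell^\infty(\X)$ ``by monotonicity.'' Monotonicity goes the wrong way here: relative amenability of $X$ means the existence of a $\G$-equivariant u.c.p.\ map from $\ell^\infty(\G)$ \emph{into} $X$, so a map into the smaller system $\ell^\infty(\X)$ is automatically a map into the larger system $M_{P_\X}$, but not conversely. In invariant-state terms, relative amenability of $\ell^\infty(\X)$ requires an $\ell^1(\G)P_\X$-invariant state, whereas relative amenability of $M_{P_\X}$ only requires a $P_\X\ell^1(\G)P_\X$-invariant one, which is a weaker condition. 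Indeed, if the implication you assert held in general, then Proposition~\ref{C*Injectivity Implies Relative Amenability} together with your argument would yield $(4)\Rightarrow(2)$ \emph{without} the hypothesis $\ell^\infty(\X)=M_{P_\X}$, contrary to the way the theorem is carefully stated. The paper avoids this loop entirely: it proves $(3)\Rightarrow(4)$ directly by observing that, in the notation of \cite{dRH23}, the reduced part satisfies $\mathcal{R}(L^\infty(\hh\X\backslash\hh\G)) = C(\hh\X\backslash\hh\G)$, and then invoking \cite{dRH23}*{Lemma 3.16}, which transfers $\hh\G$-$W^*$-injectivity of a $\hh\G$-$W^*$-algebra to $\hh\G$-\cst-injectivity of its reduced part. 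You should replace your detour through $M_{P_\X}$ and the reduced crossed product $\crosGBr$ with this direct transfer (or otherwise find an argument that does not require deducing relative amenability of $\ell^\infty(\X)$ from that of $M_{P_\X}$).
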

\begin{proof}
 $(1)\!\Longrightarrow\!(2)$: This is trivial.  
  $(2)\!\Longrightarrow\!(3)$:  Since $\ell^\infty(\X)$ is relatively amenable, $\B_{\Gamma_\X} = C(\partial_F\G)$ by Theorem \ref{Relative Amenability iff FH-Boundaries Are Equal}.
  Therefore, by Lemma \ref{Projection Lemma}, there is a $\hh\G$-equivariant u.c.p. projection $\crosGFF \to L^\infty(\hh\X\backslash\hh\G)$ onto $L^\infty(\hh\X\backslash\hh\G)$. Since $C(\partial_F \G)$ is $\G$-injective, we have $\hh\G$-$W^*$-injectivity of $\crosGFF$ by \cite{dRH23}*{Theorem 4.3}.
  Hence, $L^\infty(\hh\X\backslash\hh\G)$ is $\hh\G$-$W^*$-injective.
  $(3)\!\Longrightarrow\!(4)$: Borrowing the notation from \cite{dRH23}, it is easy to check that $\mathcal{R}(L^\infty(\hh\X\backslash\hh\G)) = C(\hh\X\backslash\hh\G)$. Therefore, we have the result by  \cite{dRH23}*{Lemma 3.16}. The last part is proved in Proposition \ref{C*Injectivity Implies Relative Amenability}.
\end{proof}
Consider the case where $\ell^\infty(\X) = \ell^\infty(\G/\h)$ is a coideal of quotient type. Recall from \cite{MR4442841}*{Theorem 3.7} that $\h$ is amenable iff $\ell^\infty(\G/\h)$ is relatively amenable iff $\hh\h$ is coamenable. In this special case, we obtain a characterization of relative amenability in terms of $\hh\G$-$W^*$-injectivity.
\begin{corollary}
    Let $\G$ be a discrete quantum group and $\h\leq \G$ a closed quantum subgroup. The following are equivalent:
    \begin{enumerate}
    \item $\h$ is amenable;
      \item $C(\hh\h)$ is $\hh\G$-\cst-injective.
     \item $L^\infty(\hh\h)$ is $\hh\G$-$W^*$-injective;
       \item $\crosGGHr$ is $\hh\G$-\cst-injective;
        \item $\crosGGHF$ is $\hh\G$-$W^*$-injective;
    \end{enumerate}
\end{corollary}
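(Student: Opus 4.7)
The plan is to specialize Theorem~\ref{Relative Amenability and Injectivity Theorem} to the quotient-type coideal $\ell^\infty(\G/\h)$ arising from the closed quantum subgroup $\h \leq \G$. Because coideals of quotient type have a central group-like projection (\cite{MR3552528}*{Theorem 4.3}), we have $\ell^\infty(\G/\h) = M_{P_\h}$, so all four conditions of Theorem~\ref{Relative Amenability and Injectivity Theorem} collapse to a single equivalence class in this setting. Combined with the recollection just preceding the corollary (citing \cite{MR4442841}*{Theorem 3.7}) that $\h$ is amenable if and only if $\ell^\infty(\G/\h)$ is relatively amenable, this immediately yields $(1)\iff(2)\iff(3)$, under the standard $\hh\G$-equivariant identifications of $L^\infty(\hh\h)$ and $C(\hh\h)$ with the quotient-type coideals $L^\infty(\hh\h\backslash\hh\G)$ and $C(\hh\h\backslash\hh\G)$ respectively.

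For the crossed product equivalences, I would first invoke Proposition~\ref{Furstenberg Boundaries of Subgroups Are FH Boundaries} to identify $\B_{\Gamma_{\G/\h}}$ with $C(\partial_F\h)$. For the implication $(1)\Rightarrow(5)$: amenability of $\h$ gives relative amenability of $\ell^\infty(\G/\h)$, and then Theorem~\ref{Relative Amenability iff FH-Boundaries Are Equal} forces $C(\partial_F\G) = \B_{\Gamma_{\G/\h}}$; since $C(\partial_F\G)$ is $\G$-injective, so is $\B_{\Gamma_{\G/\h}}$, whereupon \cite{dRH23}*{Theorem 4.3} delivers $\hh\G$-$W^*$-injectivity of $\crosGGHF$. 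The analogous \cst-level argument yields $(1)\Rightarrow(4)$. For the reverse directions $(5)\Rightarrow(3)$ and $(4)\Rightarrow(2)$, I would appeal to Theorem~\ref{Projection Lemma}, which for the compact quasi-subgroup $\hh\h$ produces $\hh\G$-equivariant u.c.p. projections $\crosGGHF \to L^\infty(\hh\h\backslash\hh\G)$ and $\crosGGHr \to C(\hh\h\backslash\hh\G)$; $\hh\G$-$W^*$-injectivity (respectively $\hh\G$-\cst-injectivity) then descends to these retracts.

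The main obstacle is to justify carefully the $\hh\G$-equivariant identifications of the Baaj--Vaes viewpoint on $L^\infty(\hh\h)$ and $C(\hh\h)$ with the corresponding quotient-type coideals $L^\infty(\hh\h\backslash\hh\G)$ and $C(\hh\h\backslash\hh\G)$, since $L^\infty(\hh\h)$ sits in $L^\infty(\hh\G)$ as an invariant subalgebra rather than as a generic coideal. Once this dictionary is established, the corollary assembles directly from Theorem~\ref{Relative Amenability and Injectivity Theorem}, Theorem~\ref{Projection Lemma}, Theorem~\ref{Relative Amenability iff FH-Boundaries Are Equal}, and \cite{dRH23}*{Theorem 4.3}, together with the previously recalled equivalence of amenability of $\h$ with relative amenability of $\ell^\infty(\G/\h)$.
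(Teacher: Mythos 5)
Your argument is correct, and most of it coincides with the paper's proof: the implications $(1)\Rightarrow(5)$ via Theorem \ref{Relative Amenability iff FH-Boundaries Are Equal} (giving $\B_{\Gamma_{\G/\h}}=C(\partial_F\G)$) together with \cite{dRH23}*{Theorem 4.3}, and $(5)\Rightarrow(3)$ via Theorem \ref{Projection Lemma}, are exactly the steps the authors take; they close the remaining loop by quoting $(4)\Leftrightarrow(5)$ from \cite{dRH23}*{Theorem 4.3}, whereas you instead run the reduced-crossed-product half of Theorem \ref{Projection Lemma} to get $(4)\Rightarrow(2)$ — an equally valid closure. The genuine divergence is in $(1)\Leftrightarrow(2)\Leftrightarrow(3)$: the paper simply cites \cite{dRH23}*{Corollary 4.4}, while you derive it internally from Theorem \ref{Relative Amenability and Injectivity Theorem} using that quotient-type coideals have a central group-like projection, hence $\ell^\infty(\G/\h)=M_{P_\X}$ by \cite{MR3552528}*{Theorem 4.3}, combined with the equivalence of amenability of $\h$ with relative amenability of $\ell^\infty(\G/\h)$ from \cite{MR4442841}*{Theorem 3.7}. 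Your route is more self-contained and shows the corollary really follows from the machinery of the paper itself; the paper's route is shorter but leans on an external result. Two small caveats: Theorem \ref{Relative Amenability and Injectivity Theorem} with $\ell^\infty(\X)=M_{P_\X}$ yields the cycle $(2)\Rightarrow(3)\Rightarrow(4)\Rightarrow(2)$ together with $(1)\Rightarrow(2)$, but \emph{not} $(2)\Rightarrow(1)$, so the four conditions do not literally "collapse to a single equivalence class"; fortunately the corollary only needs $(2)\Leftrightarrow(3)\Leftrightarrow(4)$ of that theorem, so nothing is lost. Also, the identification $\B_{\Gamma_{\G/\h}}=C(\partial_F\h)$ from Proposition \ref{Furstenberg Boundaries of Subgroups Are FH Boundaries} is not actually used anywhere in your argument and can be dropped.
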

\begin{proof}
$(1)\!\Longleftrightarrow\!(2)\!\Longleftrightarrow\!(3)$ follows from  \cite{dRH23}*{Corollary 4.4}.
    $(1)\!\Longrightarrow\!(5)$ follows because $\B_{\Gamma_{\G/\h}} = C(\partial_F\G)$ thanks to Theorem \ref{Relative Amenability iff FH-Boundaries Are Equal}.
    $(5)\!\Longrightarrow\!(3)$ follows from Lemma \ref{Projection Lemma}.
    $(4)\!\Longleftrightarrow\!(5)$ follows from \cite{dRH23}*{Theorem 4.3}.
\end{proof}

\section{Unique trace property}\label{sec6}
The notion of amenable actions of discrete quantum group $\G$ on a von Neumann algebra $N$ is defined and studied in \cite{M18}.  An action $\alpha:\G\act  N$ is called amenable if there exists
a conditional expectation $E_\alpha : N\vtp \ell^\infty(\G) \to \alpha(N)$ such that $(\id\otimes \Delta_\G)\circ E_\alpha=(E_\alpha\otimes\id)\circ(\id\otimes\Delta_\G)$. In the next theorem we are dealing with the
(right) crossed product of (right) coideals $\ell^\infty(\X_r)$ with respect to the comultiplication of $\G$.  We denote it by $\ell^\infty(\X_r)\rtimes \G $ and is the following von Neumann algebra,
\[
\ell^\infty(\X_r)\rtimes \G :=\{ \Delta_\G(\ell^\infty(\X_r)) \cup \C\vtp L^\infty(\hh\G)^\prime \}^{\prime\prime}.
\]
\begin{theorem}\label{Injectivity Characterizations}
    Let $\G$ be a discrete quantum group. Let $\ell^\infty(\X_r)$ be a (right) von Neumann algebra coideal of $\G$. Then the following statements are equivalent:
    \begin{enumerate}
   
    \item The action $\G\act \ell^\infty(\X_r)$ is amenable and $\ell^\infty(\X_r)$ is an injective von Neumann algebra.
       \item $\ell^\infty(\X_r)$ is $\G$-injective.
    \end{enumerate}
    Moreover, if $\G$ is unimodular  then the latter statements are equivalent with the following:
    \begin{enumerate}[resume]
    \item $\ell^\infty(\X_r)  \rtimes \G$ is injective;
     \item $L^\infty(\hh\X\backslash\hh\G)$ is injective.
    \end{enumerate}
\end{theorem}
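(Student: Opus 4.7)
The plan is to prove $(1)\!\Longleftrightarrow\!(2)$ for general discrete $\G$, and then under unimodularity to chain $(2)\!\Longleftrightarrow\!(3)$ via the quantum Anantharaman-Delaroche theorem of \cite{M18} with $(3)\!\Longleftrightarrow\!(4)$ via a commutant-duality argument for the codual coideal.

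For $(1)\!\Longleftrightarrow\!(2)$: the forward direction $(2)\!\Longrightarrow\!(1)$ is the easier half. Injectivity of $\ell^\infty(\X_r)$ follows immediately from injectivity of $\ell^\infty(\G)$ and a u.c.p.\ retraction onto $\ell^\infty(\X_r)$, and the amenability expectation $E_\alpha\colon\ell^\infty(\X_r)\vtp\ell^\infty(\G)\to\Delta_\G(\ell^\infty(\X_r))$ is built from the $\G$-equivariant projection $E$ by composing with $\Delta_\G$ in the second leg and exploiting the coaction identity $\Delta_\G\circ E=(\id\otimes E)\circ\Delta_\G$ together with coassociativity. The converse $(1)\!\Longrightarrow\!(2)$ proceeds by averaging: picking a (non-equivariant) u.c.p.\ projection $\phi\colon\ell^\infty(\G)\to\ell^\infty(\X_r)$ from injectivity of $\ell^\infty(\X_r)$, I would define
\[
E(y)\ :=\ \Delta_\G^{-1}\!\Bigl(E_\alpha\bigl((\phi\otimes\id)\Delta_\G(y)\bigr)\Bigr),\qquad y\in\ell^\infty(\G),
\]
whose $\G$-equivariance is forced by $(\id\otimes\Delta_\G)\circ E_\alpha=(E_\alpha\otimes\id)\circ(\id\otimes\Delta_\G)$ and the counit property, and which restricts to the identity on $\ell^\infty(\X_r)$ because both $\phi$ and $E_\alpha$ do. No unimodularity enters at this stage.

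For the unimodular equivalences I would invoke \cite{M18}*{Theorem 4.7} for $(1)\!\Longleftrightarrow\!(3)$: the action of a unimodular discrete quantum group on an injective von Neumann algebra is amenable if and only if the crossed product is injective. As the excerpt emphasises, the proof of that theorem crucially uses a u.c.p.\ projection $B(\ell^2(\G))\to\ell^\infty(\G)$ that restricts to $h_{\hh\G}$ on $L^\infty(\hh\G)$, a condition that forces unimodularity by \cite{MR4442841}*{Lemma 5.2}. For $(3)\!\Longleftrightarrow\!(4)$, I would compute the commutant of $\ell^\infty(\X_r)\rtimes\G$ inside $B(\ell^2(\G))\vtp B(\ell^2(\G))$ and identify it---up to spatial amplification---with $L^\infty(\hh\X\backslash\hh\G)$. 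The calculation mirrors Lemma~\ref{Commutant Duality Lemma}, invoking the group-like projections $P_\X$ and $Q_\X=R_\G(P_\X)$ together with the modular conjugation $\hh{J}$ of $h_{\hh\G}$. Since injectivity transfers to commutants and is stable under amplification, $(3)$ and $(4)$ become equivalent.

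The main obstacle I anticipate is the commutant computation in $(3)\!\Longleftrightarrow\!(4)$. Identifying $(\ell^\infty(\X_r)\rtimes\G)'$ with $L^\infty(\hh\X\backslash\hh\G)\vtp B(H)$ requires reconciling the left/right conventions for $\G$-actions on the coideal versus its codual, verifying that the two generators $\Delta_\G(\ell^\infty(\X_r))$ and $\C\vtp L^\infty(\hh\G)'$ really exhaust the crossed product, and---crucially---using that the tracial property of $h_{\hh\G}$ (equivalent to unimodularity) trivialises the modular automorphism group so that the duality is not twisted by a cocycle. Absent unimodularity, the analogous commutant picks up modular corrections and the clean equivalence with $(4)$ would break down, which is consistent with the theorem only asserting the last two equivalences in the unimodular case.
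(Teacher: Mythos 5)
Your proposal follows the same skeleton as the paper's proof: $(1)\!\iff\!(2)$ and $(1)\!\iff\!(3)$ are delegated to the quantum Anantharaman--Delaroche machinery of \cite{M18} (the paper cites \cite{M18}*{Proposition 4.5} for the first pair and \cite{M18}*{Corollary 6.4} --- not Theorem 4.7, which is the result criticised in Section 3 for a different purpose --- for the crossed-product characterization, and that citation is exactly where unimodularity genuinely enters), and $(3)\!\iff\!(4)$ is a commutant computation. Your explicit averaging formula $E=\Delta_\G^{-1}\circ E_\alpha\circ(\phi\otimes\id)\circ\Delta_\G$ for $(1)\!\Longrightarrow\!(2)$ is precisely the content of the cited proposition and checks out (equivariance follows from coassociativity plus $(E_\alpha\otimes\id)\circ(\id\otimes\Delta_\G)=(\id\otimes\Delta_\G)\circ E_\alpha$, as you say). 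The one place you diverge, and where your anticipated ``main obstacle'' dissolves, is $(3)\!\iff\!(4)$: the paper does not compute the commutant of the crossed product inside $B(\ell^2(\G))\vtp B(\ell^2(\G))$ and needs neither $\hh{J}$, nor $P_\X$, nor traciality of $h_{\hh\G}$ there. It simply identifies $\ell^\infty(\X_r)\rtimes\G$ with $M=\bigl(\ell^\infty(\X_r)\cup L^\infty(\hh\G)'\bigr)''\subseteq B(\ell^2(\G))$ via \cite{embed}*{Remark 3.8}, uses that a von Neumann algebra is injective iff its commutant is, and reads off $M'=\ell^\infty(\X_r)'\cap L^\infty(\hh\G)=L^\infty(\hh\X\backslash\hh\G)$ directly from the definition of the codual coideal. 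Consequently your closing claim --- that absent unimodularity the equivalence $(3)\!\iff\!(4)$ ``would break down'' because the duality picks up modular corrections --- is mistaken: that equivalence is unimodularity-free, and the Kac assumption is consumed entirely by the step $(1)\!\iff\!(3)$. This does not affect the validity of your argument under the stated hypotheses, but it mislocates the role of the hypothesis and would lead you to a substantially heavier computation than necessary.
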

\begin{proof}
The proof of this theorem is based on some known results about amenable actions of discrete quantum groups on von Neumann algebras \cite{M18}. $(1)\!\Longrightarrow\!(2)$: Since the action of a discrete quantum group $\G$ on a coideal $\ell^\infty(\X_r)$ is the restriction of the comultiplication of $\G$, the result is straightforward by \cite{M18}*{Definition 4.4.(1),  Proposition 4.5.(1)}. $(2)\!\Longrightarrow\!(1)$: By assumption there exists a $\G$-equivariant conditional expectation $\ell^\infty(\G)\to \ell^\infty(\X_r)$, which yields injectivity of $\ell^\infty(\X_r)$. Moreover, the action $\G\act \ell^\infty(\X_r)$ is amenable by \cite{M18}*{Proposition 4.5.(2)}.

 If $\G$ is unimodular  we have $(1)\!\iff\!(3)$ by \cite{M18}*{Corollary 6.4}.
$(3)\!\iff\!(4)$: The crossed product $\ell^\infty(\X_r)  \rtimes \G$ can be identified with the von Neumann algebra $M=(\ell^\infty(\X_r) \cup L^\infty(\hh\G)^\prime)^{\prime\prime}$ \cite{embed}*{Remark 3.8}. Therefore, injectivity of $\ell^\infty(\X_r)  \rtimes \G$ is equivalent to injectivity of $M$ and also $M^\prime$. By the definition of codual coideals we have,
\[
L^\infty(\hh\X\backslash \hh\G) = \ell^\infty(\X_r)^\prime \cap L^\infty (\hh\G) =M^\prime,
\]
which yields the equivalence of injectivity of $\ell^\infty(\X_r)  \rtimes \G$ with injectivity of $L^\infty(\hh\X\backslash \hh\G)$.
\end{proof}
The following theorem states that relative amenability is equivalent to $\G$-injectivity for all coideals  $\ell^\infty(\X)$ of a unimodular discrete quantum group $\G$ under the condition that $\hh\X$ is compact quasi-subgroup of $\hh\G$. Actually, this is a noncommutative version  of \cite{MR3267522}*{Theorem 2.(a)} for unimodular discrete quantum groups.
\begin{theorem}\label{Relative Amenability Implies GInjectivity}
    Let $\G$ be a unimodular discrete quantum group. Let $\hh\X$ be a compact quasi-subgroup of $\hh\G$. If  $\ell^\infty(\X)$ is relatively amenable, then $\ell^\infty(\X)$ is $\G$-injective.
\end{theorem}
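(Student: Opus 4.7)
The plan is to chain together the existing results: pass from relative amenability of $\ell^\infty(\X)$ to injectivity of $L^\infty(\hh\X\backslash\hh\G)$ via Theorem~\ref{Relative Amenability and Injectivity Theorem}, invoke Theorem~\ref{Injectivity Characterizations} (which is where unimodularity enters) to translate this into $\G$-injectivity of the right codual coideal $\ell^\infty(\X_r)$, and finally transfer to $\ell^\infty(\X)$ by conjugating with the unitary antipode $R_\G$.

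Concretely, Theorem~\ref{Relative Amenability and Injectivity Theorem}$(2)\!\Longrightarrow\!(3)$ immediately yields $\hh\G$-$W^*$-injectivity of $L^\infty(\hh\X\backslash\hh\G)$. One then needs to upgrade this to ordinary von Neumann algebra injectivity: I would do this by noting that the Fubini crossed product $\crosGFF$ of Lemma~\ref{Projection Lemma} is itself $\hh\G$-$W^*$-injective (from $\G$-injectivity of $C(\partial_F\G)$ via \cite{dRH23}*{Theorem 4.3}) and embeds $\hh\G$-equivariantly into an injective ambient $\hh\G$-$W^*$-operator system (for instance $B(\ell^2(\G))\vtp C(\partial_F\G)^{**}$ equipped with the action $\hh\alpha$), so its $\hh\G$-$W^*$-injectivity provides a u.c.p. projection from that ambient onto $\crosGFF$; composing with the projection $\crosGFF\to L^\infty(\hh\X\backslash\hh\G)$ from Lemma~\ref{Projection Lemma} witnesses ordinary injectivity of $L^\infty(\hh\X\backslash\hh\G)$. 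With $\G$ unimodular, Theorem~\ref{Injectivity Characterizations}$(4)\!\Longrightarrow\!(2)$ then produces a right $\G$-equivariant u.c.p. projection $E:\ell^\infty(\G)\to\ell^\infty(\X_r)$. Setting $F:=R_\G\circ E\circ R_\G$, the fact that $R_\G$ is an involutive $*$-antiautomorphism with $R_\G(\ell^\infty(\X_r))=\ell^\infty(\X)$ makes $F$ a u.c.p. projection onto $\ell^\infty(\X)$, and the identity $\Delta_\G\circ R_\G=(R_\G\otimes R_\G)\circ\sigma\circ\Delta_\G$ together with right $\G$-equivariance of $E$ yields $\Delta_\G\circ F=(\id\otimes F)\circ\Delta_\G$ by a short direct calculation, i.e.\ left $\G$-equivariance of $F$.

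The main subtlety lies in the upgrade from $\hh\G$-$W^*$-injectivity of $\crosGFF$ (and consequently $L^\infty(\hh\X\backslash\hh\G)$) to plain von Neumann algebra injectivity; identifying a convenient injective $\hh\G$-$W^*$-ambient for the Fubini crossed product into which the equivariant embedding lands is the key step. The remainder is a direct invocation of already-proven theorems together with the routine $R_\G$-conjugation of the equivariant projection.
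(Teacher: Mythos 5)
Your overall route coincides with the paper's: relative amenability gives $\hh\G$-$W^*$-injectivity of $L^\infty(\hh\X\backslash\hh\G)$ via Theorem~\ref{Relative Amenability and Injectivity Theorem}, plain injectivity of $L^\infty(\hh\X\backslash\hh\G)$ is then fed into Theorem~\ref{Injectivity Characterizations} (which is indeed where unimodularity enters), and the resulting $\G$-injectivity of $\ell^\infty(\X_r)$ is transported to $\ell^\infty(\X)$ by conjugating with $R_\G$; your equivariance computation for $F=R_\G\circ E\circ R_\G$ is correct. However, the step you yourself flag as the crux --- upgrading $\hh\G$-$W^*$-injectivity to ordinary von Neumann algebra injectivity --- contains a genuine gap. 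You route it through the Fubini crossed product and propose $B(\ell^2(\G))\vtp C(\partial_F\G)^{**}$ as the injective ambient. Injectivity of that von Neumann algebra is equivalent to injectivity of $C(\partial_F\G)^{**}$, i.e.\ to nuclearity of $C(\partial_F\G)$, which is established nowhere in the paper and is not known for general discrete quantum groups; $\G$-injectivity of $C(\partial_F\G)$ only yields injectivity of $C(\partial_F\G)$ as an operator system, and that does not pass to the bidual (compare $B(H)$). The paper itself has to add exactness or unimodularity hypotheses in Remark~\ref{Relative Amenability Implies Coamenability Remark} and Corollary~\ref{wcFb} precisely because such nuclearity-type facts are not free.

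The fix is immediate and makes the crossed-product detour unnecessary: you already have that $L^\infty(\hh\X\backslash\hh\G)$ itself is $\hh\G$-$W^*$-injective, and its inclusion into $B(L^2(\hh\G))$ --- a $\hh\G$-$W^*$-algebra under the action implemented by the multiplicative unitary, which restricts to $\Delta_{\hh\G}$ on $L^\infty(\hh\G)$ --- is $\hh\G$-equivariant. Applying $\hh\G$-$W^*$-injectivity to this single inclusion gives a u.c.p.\ projection $B(L^2(\hh\G))\to L^\infty(\hh\X\backslash\hh\G)$, and injectivity of $B(L^2(\hh\G))$ then yields ordinary injectivity. This is exactly the paper's argument. (If you do want to pass through $\crosGFF$, the natural injective ambient is $B(\ell^2(\G))\vtp B(H)$ for a faithful representation $C(\partial_F\G)\subseteq B(H)$, with $\hh\alpha$ acting on the first leg only --- not the bidual.)
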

\begin{proof}
    Since $\ell^\infty(\X)$ is relatively amenable, $L^\infty(\hh\X\backslash \hh\G)$ is $\hh\G$-$W^*$-injective by
 Theorem~\ref{Relative Amenability and Injectivity Theorem}. Moreover, $B(L^2(\hh\G))$ is a $\hh\G$-von Neumann algebra where the action is implemented by the (left) multiplicative unitary. The embedding $L^\infty(\hh\X\backslash \hh\G) \subseteq B(L^2(\hh\G))$ is $\hh\G$-equivariant so there exists a ($\hh\G$-equivariant) conditional expectation $E:B(L^2(\hh\G))\to L^\infty(\hh\X\backslash \hh\G)$ onto $L^\infty(\hh\X\backslash \hh\G)$, which in particular, implies $L^\infty(\hh\X\backslash \hh\G)$ is injective von Neumann algebra. Hence, by Theorem \ref{Injectivity Characterizations} the (right) coideal $\ell^\infty(\X_r)$ is $\G$-injective which is equivalent to $\G$-injectivity of the (left) coideal
 $R_\G(\ell^\infty(\X_r))=\ell^\infty(\X)$.
\end{proof}

The next proposition generalizes \cite{MR3693148}*{Proposition 5.5} to $\G$-$W^*$-injective coideals of a locally compact quantum group $\G$.
\begin{proposition}\label{CB=CP} Let  $M$ be a $\G$-$W^*$injective (right) coideal von Neumann algebra of a locally compact quantum group $\G$, we have
\[
CB_{\G}(L^\infty(\G), M)= span CP_{\G}(L^\infty(\G), M).
\]
\end{proposition}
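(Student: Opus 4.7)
The inclusion $\operatorname{span} CP_{\G}(L^\infty(\G), M) \subseteq CB_{\G}(L^\infty(\G), M)$ is immediate, so the substance lies in the reverse inclusion. Given $\Phi \in CB_{\G}(L^\infty(\G), M)$, the plan is to run a $\G$-equivariant version of the Wittstock--Paulsen decomposition, with the $\G$-$W^*$-injectivity of $M$ playing the role of the injectivity hypothesis in the classical statement.

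Concretely, I would seek $\G$-equivariant completely positive maps $\Phi_{+}, \Phi_{-}: L^\infty(\G) \to M$ such that the Paulsen matrix
\[
\tilde{\Phi}: L^\infty(\G) \to M_2(M), \qquad \tilde{\Phi}(x) = \begin{pmatrix} \Phi_{+}(x) & \Phi(x) \\ \Phi(x^*)^* & \Phi_{-}(x) \end{pmatrix}
\]
is completely positive and $\G$-equivariant (for the diagonal action on $M_2(M)$). Once such $\Phi_{\pm}$ are in hand, compressing $\tilde{\Phi}$ against the vectors $\binom{1}{\pm 1}, \binom{1}{\pm i} \in \C^2$ together with the standard polarization identity decomposes $\Phi$ as a linear combination of four $\G$-equivariant CP maps built from the entries of $\tilde{\Phi}$, exhibiting $\Phi \in \operatorname{span} CP_{\G}(L^\infty(\G), M)$.

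To construct the dominating maps $\Phi_{\pm}$, I would embed $M$ equivariantly as $\iota: M \hookrightarrow B(L^2(\G))$, with $B(L^2(\G))$ carrying its canonical $\G$-action. Classical Wittstock--Paulsen applied to $\iota \circ \Phi: L^\infty(\G) \to B(L^2(\G))$ furnishes (non-equivariant) CP dominations $\Phi_{\pm}^{0}: L^\infty(\G) \to B(L^2(\G))$ making the associated $2 \times 2$ matrix completely positive. The $\G$-$W^*$-injectivity of $M$ then supplies a $\G$-equivariant normal conditional expectation $E: B(L^2(\G)) \to M$, and its amplification $\id_{M_2} \otimes E$ projects the classical Paulsen matrix down to a CP map into $M_2(M)$. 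The off-diagonal entries are preserved since $E|_M = \id_M$ and $\Phi$ already lands in $M$; the diagonals become $E \circ \Phi_{\pm}^{0}: L^\infty(\G) \to M$.

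The main obstacle is that $E \circ \Phi_{\pm}^{0}$ need not be $\G$-equivariant, since the classical Wittstock pieces $\Phi_{\pm}^{0}$ themselves need not be. I would resolve this by running Wittstock--Paulsen internally in the category of $\G$-$W^*$-operator systems rather than first forgetting the equivariant structure: the $\G$-$W^*$-injectivity of $M$, which passes to $M_2(M)$, provides the Hahn--Banach/extension property required for an equivariant Wittstock theorem, yielding CP diagonals $\Phi_{\pm}$ that are equivariant from the outset while still dominating $\Phi$ in the Paulsen sense with norm control $\|\Phi_{\pm}\| \leq \|\Phi\|_{cb}$. Establishing this equivariant refinement of Wittstock--Paulsen is the technical heart of the argument; a backup route, if the direct equivariant extension is awkward, would be an averaging/symmetrization procedure applied to the weak-$*$ compact convex set of CP dominations of $\Phi$, exploiting the natural action of $\G$ on $CB(L^\infty(\G), M)$ induced by $\alpha_M$ and $\Delta_\G$, and extracting an equivariant fixed point from the non-equivariant ones via a Markov--Kakutani or Day-type argument.
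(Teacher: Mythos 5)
Your refined route is essentially the paper's proof, which simply defers to Crann's Proposition 5.5: one restricts the Paulsen $2\times 2$ map to the Paulsen operator system $\bigl\{\bigl(\begin{smallmatrix}\lambda 1 & x\\ y & \mu 1\end{smallmatrix}\bigr)\bigr\}\subseteq M_2(L^\infty(\G))$, where it is automatically u.c.p.\ and $\G$-equivariant, and then extends to all of $M_2(L^\infty(\G))$ using $\G$-$W^*$-injectivity of $M_2(M)$, so the corners give the desired equivariant CP dominations and polarization finishes the argument. Only this ``internal'' version works: your first route (classical Wittstock followed by the expectation $E$) fails for the reason you note, and the Markov--Kakutani averaging fallback should be discarded since extracting a fixed point there would essentially require amenability of $\G$, which is not assumed.
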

\begin{proof}We skip the proof since it is similar to the proof of  \cite{MR3693148}*{Proposition 5.5}. We just need to note that $M$ is an injective von Neumann algebra since $M$ is a $\G$-invariant subalgebra of $B(L^2(\G))$ and $M$ is $\G$-$W^*$-injective.
\end{proof}

Let $\hh\G$ be a compact quantum group. The half-lifted versions of the comultiplication of $\hh\G$  will be denoted by $\Delta_r^{u,r}$, and $\Delta_r^{r,u}$, \cite{MR3667214}. In fact,
\begin{align*}
   \Delta_r^{u,r}:& C(\hh\G)\to C^u(\hh\G)\otimes C(\hh\G), \quad (\id\otimes\Lambda_{\hh\G})\circ \Delta^u_{\hh\G}=  \Delta_r^{u,r} \circ \Lambda_{\hh\G}\\
   \Delta_r^{r,u}:& C(\hh\G)\to C(\hh\G)\otimes C^u(\hh\G), \quad (\Lambda_{\hh\G}\otimes\id)\circ \Delta^u_{\hh\G}=  \Delta_r^{r,u} \circ \Lambda_{\hh\G}
\end{align*}
It can be easily observed that
\begin{equation}\label{halflifted}
    (\Delta_r^{r.u}\otimes\id)\circ\Delta_{\hh\G}=(\id\otimes\Delta_r^{u,r})\circ\Delta_{\hh\G}.
\end{equation}

\begin{corollary}\label{CBandfunctional}
Let $\G$ be a discrete quantum group.
 Assume that the (right) coideal  $L^\infty(\hh\X\backslash \hh\G)$ is $\hh\G$-$W^*$-injective. Then for every completely bounded $\hh\G$-equivariant map $\Phi:L^\infty(\hh\G) \to L^\infty(\hh\X\backslash \hh\G)$ there exists a functional $\mu\in C^u(\hh\G)^*$
such that
\[
\Phi(x)= (\mu\otimes\id)\Delta_r^{u,r}(x)=(\mu\otimes \id)\left(\Ww^*_\G (1\otimes x) \Ww_\G\right), \quad x\in L^\infty(\hh\G).
\]
\end{corollary}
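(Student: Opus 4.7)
The plan is to reduce to the completely positive case via Proposition~\ref{CB=CP} and then to invoke the standard correspondence between $\hh\G$-equivariant completely bounded maps on $L^\infty(\hh\G)$ and functionals on the universal \cst-algebra $C^u(\hh\G)$.

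First, since $L^\infty(\hh\X\backslash\hh\G)$ is by assumption a $\hh\G$-$W^*$-injective right coideal of $L^\infty(\hh\G)$, Proposition~\ref{CB=CP}, applied with $\hh\G$ in the role of the locally compact quantum group and $M = L^\infty(\hh\X\backslash\hh\G)$, gives
\[
CB_{\hh\G}\bigl(L^\infty(\hh\G), L^\infty(\hh\X\backslash\hh\G)\bigr) = \mathrm{span}\, CP_{\hh\G}\bigl(L^\infty(\hh\G), L^\infty(\hh\X\backslash\hh\G)\bigr).
\]
Because the assignment $\mu \mapsto (\mu \otimes \id)\Delta_r^{u,r}$ is linear, it is enough to produce $\mu$ under the additional assumption that $\Phi$ is completely positive. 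Composing with the inclusion $L^\infty(\hh\X\backslash\hh\G) \hookrightarrow L^\infty(\hh\G)$, we may treat $\Phi$ as a c.p.\ $\hh\G$-equivariant map from $L^\infty(\hh\G)$ into itself, for which the classification theorem of Junge--Neufang--Ruan (as developed further by Daws in the locally compact quantum group setting) becomes applicable.

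Second, I would define $\mu \in C^u(\hh\G)^*$ by the prescription
\[
\mu\bigl((\id \otimes \omega)\Delta_r^{u,r}(x)\bigr) := \omega\bigl(\Phi(x)\bigr), \qquad \omega \in L^1(\hh\G),\ x \in L^\infty(\hh\G),
\]
and extend by continuity. Well-definedness is the key point: one must show that $\sum_i (\id \otimes \omega_i)\Delta_r^{u,r}(x_i) = 0$ in $C^u(\hh\G)$ forces $\sum_i \omega_i(\Phi(x_i)) = 0$, which follows from the $\hh\G$-equivariance of $\Phi$ together with the coassociativity identity \eqref{halflifted} linking $\Delta_r^{u,r}$, $\Delta_r^{r,u}$, and $\Delta_{\hh\G}$, and the intertwining property of $\Lambda_{\hh\G}$. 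Boundedness of $\mu$ is controlled by $\|\Phi\|_{cb}$, and Podle\'s-type density of the elements $\{(\id \otimes \omega)\Delta_r^{u,r}(x) : \omega \in L^1(\hh\G),\ x \in L^\infty(\hh\G)\}$ in $C^u(\hh\G)$ allows $\mu$ to extend continuously to all of $C^u(\hh\G)$. Slicing both sides of the equation $\Phi(x) = (\mu \otimes \id)\Delta_r^{u,r}(x)$ against an arbitrary $\omega \in L^1(\hh\G)$ then recovers the defining identity for $\mu$. The second equality in the conclusion follows immediately from the relation $\Delta_r^{u,r}(x) = \Ww_\G^*(1 \otimes x)\Ww_\G$.

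The main obstacle will be establishing the well-definedness of $\mu$, which requires a delicate use of $\hh\G$-equivariance on the kernel of the slice map $\omega \otimes x \mapsto (\id \otimes \omega)\Delta_r^{u,r}(x)$ sending $L^1(\hh\G) \otimes L^\infty(\hh\G)$ into $C^u(\hh\G)$. The remaining density and boundedness bookkeeping, as well as the final verification by slicing, are routine once this kernel relation is in place.
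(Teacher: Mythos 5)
Your first step coincides with the paper's: use Proposition~\ref{CB=CP} (applied to the compact quantum group $\hh\G$ and the $\hh\G$-$W^*$-injective coideal $L^\infty(\hh\X\backslash\hh\G)$) to write $\Phi$ as a finite linear combination of completely positive $\hh\G$-equivariant maps, and then handle the c.p.\ case; by linearity of $\mu\mapsto(\mu\otimes\id)\Delta_r^{u,r}$ this reduction is sound. Where you diverge is in the treatment of the c.p.\ case: the paper simply invokes the known classification of completely positive $\hh\G$-equivariant maps on $L^\infty(\hh\G)$ (the representation theorem of Junge--Neufang--Ruan together with Crann's Lemma~5.7), whereas you try to build $\mu$ by hand via the prescription $\mu\bigl((\id\otimes\omega)\Delta_r^{u,r}(x)\bigr):=\omega(\Phi(x))$.

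That direct construction is exactly where your argument has a genuine gap. The algebraic part is fine: equivariance gives $\Phi(\fO(\hh\G))\subseteq\fO(\hh\G)$ and $\Phi=(\mu_0\otimes\id)\circ\Delta_{\hh\G}$ on $\fO(\hh\G)$ with $\mu_0=\varepsilon_{\hh\G}\circ\Phi|_{\fO(\hh\G)}$, which settles well-definedness of your formula on the algebraic span. But the assertion that ``boundedness of $\mu$ is controlled by $\|\Phi\|_{cb}$'' is precisely what needs proof and does not follow from equivariance and the coassociativity identity \eqref{halflifted}: it amounts to a \emph{lower} bound
\[
\Bigl|\sum_i\omega_i\bigl(\Phi(x_i)\bigr)\Bigr|\;\leq\; C\,\Bigl\|\sum_i(\id\otimes\omega_i)\Delta_r^{u,r}(x_i)\Bigr\|_{C^u(\hh\G)},
\]
i.e.\ a norm estimate for the functional $\mu_0$ with respect to the \emph{universal} \cst-norm on $\fO(\hh\G)$. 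Note that even positivity of $\mu_0$ on $C^u(\hh\G)$ is not automatic: $\Phi(x^*x)$ is a positive element of $L^\infty(\hh\G)$ lying in $\fO(\hh\G)$, but positivity in $L^\infty(\hh\G)$ does not transfer to positivity in $C^u(\hh\G)$, so one cannot simply evaluate the character $\varepsilon_{\hh\G}$ and conclude $\mu_0\geq 0$. Establishing this extension is the analytic content of the cited results (a Stinespring-type dilation combined with the multiplicative unitary), and your sketch defers it rather than supplying it. To repair the proof you should either carry out that dilation argument for the c.p.\ pieces or, as the paper does, quote \cite{MR2500076}*{Theorem 4.1} and \cite{MR3693148}*{Lemma 5.7} at this point.
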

\begin{proof}
This proof is easy and is based on \cite{MR2500076}*{Theorem 4.1}, \cite{MR3693148}*{Lemma 5.7} and Proposition \ref{CB=CP}.
\end{proof}
The next theorem along with Theorem \ref{Coamenability Implies Amenability: CQSs} generalizes the fact that a compact quantum group $\hh\G$ is coamenable if and only if $\G$ is amenable ($\C$ is $\G$-injective) to coideals associated with a compact quasi-subgroup. The proof of the following theorem is a modification of \cite{MR3693148}*{Theorem 5.10}.
\begin{theorem}\label{coam}
 Let $\G$ be a discrete quantum group and $\hh\X$ be a compact quasi-subgroup of $\hh\G$.  If $\ell^\infty(\X)$ is $\G$-injective then $\hh\X\backslash\hh\G$ is a coamenable coideal.
\end{theorem}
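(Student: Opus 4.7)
My approach follows the strategy of \cite{MR3693148}*{Theorem 5.10}, generalized to coideals associated with compact quasi-subgroups. First, since $\G$-injectivity of $\ell^\infty(\X)$ trivially implies relative amenability, Theorem \ref{Relative Amenability and Injectivity Theorem} (implication $(2)\Rightarrow(3)$) yields $\hh\G$-$W^*$-injectivity of $L^\infty(\hh\X\backslash\hh\G)$. Applied to the $\hh\G$-equivariant inclusion $L^\infty(\hh\X\backslash\hh\G) \hookrightarrow L^\infty(\hh\G)$, this produces a $\hh\G$-equivariant u.c.p.\ projection $E \colon L^\infty(\hh\G) \to L^\infty(\hh\X\backslash\hh\G)$.

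Second, I would observe that $E$ is automatically normal. Applying $\id \otimes h_{\hh\G}$ to both sides of the equivariance identity $\Delta_{\hh\G}\circ E = (E \otimes \id)\Delta_{\hh\G}$, the left side becomes $h_{\hh\G}(E(x))\cdot 1$ by right-invariance of the Haar state, while the right side becomes $E((\id\otimes h_{\hh\G})\Delta_{\hh\G}(x)) = E(h_{\hh\G}(x)\cdot 1) = h_{\hh\G}(x)\cdot 1$. Thus $h_{\hh\G}\circ E = h_{\hh\G}$, so $E$ preserves the faithful normal Haar state; by Takesaki's theorem a conditional expectation preserving a faithful normal state is automatically normal, hence $E$ is normal.

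Third, I would apply Corollary \ref{CBandfunctional} to write $E(x) = (\mu \otimes \id)\Delta_r^{u,r}(x)$ for some state $\mu \in C^u(\hh\G)^*$. Restricting to $\fO(\hh\G)$, $E$ becomes a $\hh\G$-equivariant u.c.p.\ projection $\fO(\hh\G)\to \fO(\hh\X\backslash\hh\G)$; then $\varepsilon_{\hh\G} \circ E|_{\fO(\hh\G)}$ is a positive idempotent functional, and the bijection between compact quasi-subgroups and such functionals identifies it with $\omega_{\hh\X}$. Slicing the right leg of the defining formula by the counit $\varepsilon_{\hh\G}$ then recovers $\mu|_{\fO(\hh\G)} = \omega_{\hh\X}$, extending $\omega_{\hh\X}$ to a state on $C^u(\hh\G)$. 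By \cite{AS21(1)}*{Theorem 4.10}, coamenability of $\hh\X\backslash\hh\G$ is equivalent to $\omega_{\hh\X}$ admitting a continuous extension to the reduced algebra $C(\hh\G)$, and the upgrade from $C^u(\hh\G)$ to $C(\hh\G)$ is the main obstacle. I would resolve it by decomposing $\mu$ via the central cover projection $p \in Z(C^u(\hh\G)^{**})$ with $pC^u(\hh\G)^{**} \cong L^\infty(\hh\G)$: normality of $E$ established above forces the singular part $(1-p)\mu$ to contribute trivially when slicing $\Delta_r^{u,r}(L^\infty(\hh\G))$, so $\mu$ may be replaced by $p\mu \in L^1(\hh\G) \subseteq C(\hh\G)^*$. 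Since $(p\mu)|_{\fO(\hh\G)}$ still equals $\omega_{\hh\X}$, this furnishes the required continuous extension of $\omega_{\hh\X}$ to $C(\hh\G)$, whose restriction to $C(\hh\X\backslash\hh\G)$ extends $\varepsilon_{\hh\G}|_{\fO(\hh\X\backslash\hh\G)}$ and witnesses coamenability.
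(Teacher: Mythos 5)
Your opening move matches the paper's: pass from $\G$-injectivity of $\ell^\infty(\X)$ through Theorem \ref{Relative Amenability and Injectivity Theorem} to $\hh\G$-$W^*$-injectivity of $L^\infty(\hh\X\backslash\hh\G)$, and invoke Corollary \ref{CBandfunctional} to represent equivariant maps by functionals. But your final step --- the claim that normality of $E$ forces the singular part $(1-p)\mu$ to ``contribute trivially'' when slicing $\Delta_r^{u,r}$, so that $\mu$ may be replaced by $p\mu\in L^1(\hh\G)$ --- is false, and it is the entire content of the proof. Every functional $\nu\in C^u(\hh\G)^*$ induces a \emph{normal} map $\Theta_\nu = (\nu\otimes\id)\Delta_r^{u,r}$ on $L^\infty(\hh\G)$: on matrix coefficients $\Theta_\nu(u_{ij})=\sum_k \nu(u^u_{ik})u_{kj}$, which is the adjoint of a bounded convolution operator on $L^1(\hh\G)$, hence weak$^*$-continuous regardless of whether $\nu$ is normal or singular with respect to the reduced representation. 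So normality of $E=\Theta_\mu$ carries no information about $\mu$. Concretely, take $\hh\X$ trivial, so $L^\infty(\hh\X\backslash\hh\G)=L^\infty(\hh\G)$, $E=\id$, and $\mu=\varepsilon^u_{\hh\G}$: the identity is certainly normal, yet if $\G$ is non-amenable one cannot replace $\varepsilon^u_{\hh\G}$ by $p\varepsilon^u_{\hh\G}$ without changing $\Theta_\mu$, since $(p\varepsilon^u_{\hh\G})|_{\fO(\hh\G)}=\varepsilon_{\hh\G}$ would mean the counit is reduced-continuous, i.e.\ $\hh\G$ is coamenable. Relatedly, your argument never genuinely uses the hypothesis after step one: a normal $\hh\G$-equivariant $h_{\hh\G}$-preserving conditional expectation onto $L^\infty(\hh\X\backslash\hh\G)$, namely $E_{\omega_{\hh\X}}=(\omega_{\hh\X}\otimes\id)\Delta_{\hh\G}$, exists for \emph{every} compact quasi-subgroup (your $E$ necessarily coincides with it by uniqueness of state-preserving expectations), and a state extension of $\omega_{\hh\X}$ to $C^u(\hh\G)$ always exists by universality. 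If your steps 2--5 were valid they would show every compact quasi-subgroup coideal is coamenable, which is false.

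What actually bridges the universal-to-reduced gap in the paper's proof is nuclearity of $C(\hh\X\backslash\hh\G)$ (Theorem \ref{GInjective Implies Nuclear}, itself a consequence of $\G$-injectivity), combined with the $\hh\G$-equivariant left inverse $\Phi$ of the coaction on $L^\infty(\hh\X\backslash\hh\G)\vtp L^\infty(\hh\G)$. One approximates $\id_{C(\hh\X\backslash\hh\G)}$ in point-norm by finite-rank u.c.p.\ maps $\Psi_a$, writes $\Phi_a=\Phi\circ(\Psi_a\otimes\id)\circ\alpha$ as $(\mu_a\otimes\id)\alpha$ where $\mu_a=\sum_n\mu^a_n\star\nu^a_n$ --- and the crucial point is that the first leg $\mu^a_n$ lives in $C(\hh\X\backslash\hh\G)^*$ (the \emph{reduced} dual), so the convolution $(\mu^a_n\otimes\nu^a_n)\Delta_r^{r,u}$ lands in $C(\hh\X\backslash\hh\G)^*$ as well. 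A weak$^*$ cluster point of the $\mu_a$ is then the reduced counit. Your proposal has no substitute for this mechanism; Corollary \ref{CBandfunctional} alone only ever produces universal-level functionals.
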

\begin{proof}
According to  Theorem~\ref{Relative Amenability and Injectivity Theorem}, $\G$-injectivity of $\ell^\infty(\X)$ ensures $\hh\G$-$W^*$-injectivity  of the von Neumann algebra $L^\infty(\hh\X\backslash \hh\G)$. Let us denote $\alpha:=\Delta_{\hh\G}|_{L^\infty(\hh\X\backslash\hh\G)}$. The $\hh\G$-equivariant left-inverse of $\alpha$, which exists by $\hh\G$-$W^*$-injectivity of $L^\infty(\hh\X\backslash\hh\G)$,  will be denoted  by $\Phi$, i.e.
\[
\Phi:L^\infty(\hh\X\backslash \hh\G)\vtp L^\infty(\hh\G) \to L^\infty(\hh\X\backslash \hh\G),  \quad (\Phi\otimes\id)\circ(\id\otimes\Delta_{\hh\G})= \alpha\circ \Phi, \quad \Phi\circ\alpha= \id.
\]
    As a unital complete contraction, $\Phi$ is completely positive and $\Phi|_{C(\hh\X\backslash\hh\G)\otimes C(\hh\G)} \neq 0$
    since $C(\hh\G)$ and $C(\hh\X\backslash\hh\G)$ are unital \cst-algebras. Moreover, the \cst-algebra $C(\hh\X\backslash\hh\G)$  is nuclear by  Theorem~\ref{GInjective Implies Nuclear}. Therefore, there exists a net $(\Psi_a)_{a\in A}:C(\hh\X\backslash\hh\G)\to C(\hh\X\backslash\hh\G)$   of finite-rank, u.c.p. maps converging to the identity map $\id:C(\hh\X\backslash\hh\G)\to C(\hh\X\backslash\hh\G)$ in the point-norm topology.
For $a\in A$,    consider the unital completely positive map $\Phi_a:C(\hh\X\backslash\hh\G)\to C(\hh\X\backslash\hh\G)$ given by    $$\Phi_a=\Phi\circ(\Psi_a\otimes\id)\circ\alpha|_{C(\hh\X\backslash\hh\G)}.$$
    
    Since $\Psi_a$ is finite rank, there exist $x^a_1,...,x^a_{n_a}\in C(\hh\X\backslash\hh\G)$ and $\mu^a_1,...,\mu^a_{n_a}\in C(\hh\X\backslash\hh\G)^*$ such that
    $$\Psi_a(x)=\sum_{n=1}^{n_a}\la\mu^a_n,x\ra x^a_n, \ \ \ x\in C(\hh\X\backslash\hh\G), \ a\in A.$$
    For each $a\in A$, and $1\leq n\leq n_a$, let $\Phi_{(a,n)}:L^\infty(\hh\G)\rightarrow L^\infty(\hh\X\backslash\hh\G)$ be defined by
    
    $$\Phi_{(a,n)}(x)=\Phi(x^a_n\otimes x), \ \ \ x\in L^\infty(\hh\G).$$
    Then $\Phi_{(a,n)}$ is completely bounded with $\norm{\Phi_{(a,n)}}_{cb}\leq\norm{x^a_n}_{C(\hh\X\backslash\hh\G)}$, and
\[
(\Phi_{(a,n)}\otimes \id)\Delta_{\hh\G}=\alpha\circ \Phi_{(a,n)}=\Delta_{\hh\G}\circ \Phi_{(a,n)}
\]    
where the first equality holds according to $\hh\G$-equivariance of $\Phi$.
So there exists $\nu^a_n\in C^u(\hh\G)^*$ such that $\Phi_{(a,n)}(x)=(\nu^a_n\otimes\id)\Delta_r^{u,r}$ by Corollary \ref{CBandfunctional}. Moreover, this shows that $\Phi_{(a,n)}(C(\hh\G))\subseteq C(\hh\X\backslash\hh\G)$.

Let $x\in C(\hh\X\backslash\hh\G)$. Then
\begin{align*}
\Phi_a(x)&=\Phi((\Psi_a\otimes\id)(\alpha(x)))\\
&=\sum_{n=1}^{n_a}\Phi\left( x^a_n\otimes \left( (\mu^a_n\otimes\id)\alpha(x)\right)\right)\\
&=\sum_{n=1}^{n_a}\Phi_{(a,n)}((\mu^a_n\otimes\id)\alpha(x))\\
&=\sum_{n=1}^{n_a} (\mu_n^a\otimes\nu^a_n\otimes \id)(\id\otimes \Delta_r^{u,r})\alpha(x)\\
&=\sum_{n=1}^{n_a} (\mu_n^a\otimes\nu^a_n\otimes \id)(\Delta_r^{r,u}|_{C(\hh\X\backslash\hh\G)}\otimes\id)\alpha(x)\\
&=\sum_{n=1}^{n_a}( \mu^a_n\star\nu^a_n \otimes\id)\alpha(x)
\end{align*}    
Letting $\mu_a=\sum_{n=1}^{n_a}\mu^a_n\star\nu^a_n$, where
    \[
 \mu^a_n\star\nu^a_n=(\mu^a_n\otimes\nu^a_n)\Delta^{r,u}_{r}|_{C(\hh\X\backslash\hh\G)}\in C(\hh\X\backslash\hh\G)^*,
 \]
 we obtain $\Phi_a=(\mu_a\otimes\id)\alpha$. Moreover, for every  $a\in A$,
$\norm{\mu_a}_{C(\hh\X\backslash\hh\G)^*}=\norm{\Phi_a}_{cb}=1.$
Since $\Phi_a$ converges to the identity map on $C(\hh\X\backslash\hh\G)$ in the point-norm topology it follows that
    $$x \star \mu_a=(\mu_a\otimes \id)\alpha(x)\rightarrow x, \ \ \  x\in C(\hh\X\backslash\hh\G).$$
    Let $\mu$ be a weak$^*$ cluster point of $(\mu_a)_{a\in A}$ in the unit ball of $C(\hh\X\backslash\hh\G)^*$. Then for every $\nu\in C(\hh\G)^*$, $\mu\star\nu=(\mu\otimes \nu)\alpha=\nu$. Moreover, if $\Lambda_{\hh\G}|_{C^u(\hh\X\backslash\hh\G)}:C^u(\hh\X\backslash\hh\G)\to C(\hh\X\backslash\hh\G)$ is the reducing morphism of $\hh\G$, then
    \[
    \alpha\circ\Lambda_{\hh\G}|_{C^u(\hh\X\backslash\hh\G)}=((\Lambda_{\hh\G}|_{C^u(\hh\X\backslash\hh\G)})\otimes \Lambda_{\hh\G})\circ\Delta_{\hh\G}^u.
    \]
    Let $\alpha^{u,r}:= (\id\otimes\Lambda_{\hh\G})\Delta_{\hh\G}^u$, then  $\alpha^{u,r} : C^u(\hh\X\backslash\hh\G)\to C^u(\hh\X\backslash\hh\G)\otimes C(\hh\G)$ and $\alpha^{u,r}(C^u(\hh\X\backslash\hh\G))(\I\otimes C(\hh\G))=C^u(\hh\X\backslash\hh\G)\otimes C(\hh\G)$. In other words, $\{\nu\star x =(\id\otimes\nu)\alpha^{u,r}(x)\st \nu\in C(\hh\G)^*, x\in C^u(\hh\X\backslash\hh\G)\}^{\text{norm-closure}}=C^u(\hh\X\backslash\hh\G)$. For every $\nu\in C(\hh\G)^*, x\in C^u(\hh\X\backslash\hh\G)$,
    \begin{align*}
    \mu\left(\Lambda_{\hh\G}|_{C^u(\hh\X\backslash\hh\G)}(\nu\star x)\right)&=
(\mu\circ\Lambda_{\hh\G}|_{C^u(\hh\X\backslash\hh\G)}\otimes \nu )  \alpha^{u,r} (x) =(\mu\otimes\nu)\alpha(x)\\
    &=\nu(x)=(\varepsilon_{\hh\G}|_{C^u(\hh\X\backslash\hh\G)}\otimes \nu)\alpha^{u,r}=\varepsilon_{\hh\G}|_{C^u(\hh\X\backslash\hh\G)}(\nu\star x).
    \end{align*}
    This implies that $\mu\circ\Lambda_{\hh\G}|_{C^u(\hh\X\backslash\hh\G)}=\varepsilon_{\hh\G}|_{C^u(\hh\X\backslash\hh\G)}$. So $\mu$ is the reduced version of the counit on $C(\hh\X\backslash\hh\G)$.
\end{proof}

\begin{remark}\label{Relative Amenability Implies Coamenability Remark}
    Let $\G$ be a discrete quantum group and $\hh\X$ a compact quasi-subgroup of $\hh\G$. Consider the following additional assumptions:
    \begin{enumerate}
        \item $\G$ is unimodular;
        \item $C(\hh\X\backslash\hh\G)$ is an exact \cst-algebra.
    \end{enumerate}
    In either case  $(1)$ or $(2)$ if $\ell^\infty(\X)$ is relatively amenable then we have that $\hh\X\backslash\hh\G$ is coamenable. Let us prove the above claims.
    \begin{enumerate}
        \item Suppose $\G$ is unimodular. Then, by Theorem~\ref{Relative Amenability Implies GInjectivity} $\ell^\infty(\X)$ is $\G$-injective and then we apply Theorem~\ref{coam}.

        \item Suppose  $C(\hh\X\backslash\hh\G)$ is an exact \cst-algebra. By Theorem~\ref{Relative Amenability and Injectivity Theorem} we have that $L^\infty(\hh\X\backslash\hh\G)$ is $\hh\G$-$W^*$-injective and relatively $\hh\G$-injective (where $L^\infty(\hh\X\backslash\hh\G)$ is relatively $\hh\G$-injective if $\Delta_{\hh\G}|_{L^\infty(\hh\X\backslash\hh\G)}$ admits a $\hh\G$-equivariant left inverse). Therefore, in the proof of Theorem~\ref{coam}, we can replace the nuclearity of $C(\hh\X\backslash\hh\G)$ by exactness and consider the nuclear map $\iota:C(\hh\X\backslash\hh\G)\to L^\infty(\hh\X\backslash\hh\G)$ that exists by injectivity of $L^\infty(\hh\X\backslash\hh\G)$.
    \end{enumerate}
\end{remark}

     A discrete quantum group $\G$ is {\it exact} if $C(\hh\G)$ is an exact \cst-algebra \cite{MR2355067}*{Proposition 1.28}. Therefore, if $\G$ is exact then all \cst-subalgebras of $C(\hh\G)$ are exact \cst-algebras. In the following corollary, since checking the exactness of a \cst-algebra $C(\hh{\G}_F\backslash\hh\G)$ is not straightforward, we replace this condition with exactness of $\G$.  
\begin{corollary}\label{wcFb}
    Let $\G$ be a discrete quantum group and $\alpha_F: \G\act  C(\fb)$. If $\G$ is unimodular or exact then $\Gamma_{\G_F} \prec\lambda_\G$.
\end{corollary}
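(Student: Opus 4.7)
The plan is to reduce the weak containment $\Gamma_{\G_F}\prec\lambda_\G$ to coamenability of the coideal $\hh{\G_F}\backslash\hh\G$, and then invoke Remark \ref{Relative Amenability Implies Coamenability Remark}. Since $\hh{\G_F}$ is a closed quantum subgroup of $\hh\G$, it is in particular a compact quasi-subgroup: the Haar state of $\hh{\G_F}$ composed with the morphism $\Pi$ provides the required positive idempotent functional $\omega_{\hh{\G_F}}\in\fO(\hh\G)^*$. As noted at the end of Section 2, the GNS representation of this idempotent state is precisely $\Gamma_{\G_F}=\lambda_{\G_F}\circ\Pi$, and the characterization of coamenability based on \cite{AS21(1)}*{Theorem 4.10} says that $\Gamma_{\G_F}\prec\lambda_\G$ is equivalent to coamenability of $\hh{\G_F}\backslash\hh\G$. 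Hence it suffices to verify the latter.

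The key ingredient is the defining property of $\G_F$ recalled in Subsection 4.1: $\ell^\infty(\G_F)$ is the unique minimal relatively amenable Baaj-Vaes subalgebra of $\ell^\infty(\G)$, and in particular $\ell^\infty(\G_F)$ is relatively amenable. With this in hand, both hypotheses of the corollary feed directly into Remark \ref{Relative Amenability Implies Coamenability Remark}: if $\G$ is unimodular, the first bullet of that remark immediately yields coamenability of $\hh{\G_F}\backslash\hh\G$.

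If instead $\G$ is exact, we use the fact that $C(\hh\G)$ is an exact \cst-algebra. Since $C(\hh{\G_F}\backslash\hh\G)$ embeds as a \cst-subalgebra of $C(\hh\G)$ and exactness is inherited by \cst-subalgebras, $C(\hh{\G_F}\backslash\hh\G)$ is exact, and the second bullet of Remark \ref{Relative Amenability Implies Coamenability Remark} again delivers coamenability. I do not anticipate a substantial obstacle here: the corollary amounts to recognizing that $\hh{\G_F}$ fits the compact quasi-subgroup framework (automatic for closed quantum subgroups) and then chaining together the relative amenability of $\ell^\infty(\G_F)$ with the preceding coamenability criterion; the only small verification is the transfer of exactness to the coideal \cst-algebra.
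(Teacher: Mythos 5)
Your proposal is correct and follows essentially the same route as the paper: both rest on the relative amenability of $\ell^\infty(\G_F)$ from \cite{MR4442841}*{Theorem 5.1} together with Remark \ref{Relative Amenability Implies Coamenability Remark} (using, in the exact case, that $C(\hh{\G_F}\backslash\hh\G)$ inherits exactness as a \cst-subalgebra of $C(\hh\G)$, exactly as the paper notes just before the corollary). The only cosmetic difference is in the last step: you pass from coamenability of $\hh{\G_F}\backslash\hh\G$ to $\Gamma_{\G_F}\prec\lambda_\G$ via the equivalence recorded at the end of Section 2 (from \cite{AS21(1)}*{Theorem 4.10}), whereas the paper instead produces the reduced version $\tilde\Pi$ of the quotient morphism and uses $\lambda_{\G_F}\circ\Pi=\tilde\Pi\circ\lambda_\G$; both are valid and equivalent in substance.
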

\begin{proof}
The quantum group $\hh\G_F$ is a compact quantum subgroup of $\hh\G$ and $\ell^\infty(\G_F)$ is relatively amenable by \cite{MR4442841}*{Theorem 5.1}.
     Therefore, $\hh{\G}_F\backslash\hh\G$ is  coamenable  by Remark \ref{Relative Amenability Implies Coamenability Remark}, so the reduced version of $\Pi:C^u(\hh\G)\to C^u(\hh\G_F)$ exists \cite[]{MR4442841} and we denote it by $\tilde{\Pi}$.   We have  $\lambda_{\G_F}\circ\Pi= \tilde\Pi \circ \lambda_{\G}$, which implies   
  $\Gamma_{\G_F}=\lambda_{\G_F}\circ\Pi \prec\lambda_{\G}$.
\end{proof}

Our main results are stated next. The proofs follow easily from the constructions we have developed.
\begin{corollary}
    Let $\G$ be a unimodular discrete quantum group. If $C(\hh\G)$ has a unique trace then the action of $\G$ on its Furstenberg boundary is faithful.
\end{corollary}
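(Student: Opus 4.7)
The goal is to show $\G_F = \G$ under the assumptions that $\G$ is unimodular (so $h_{\hh\G}$ is a trace) and that $h_{\hh\G}$ is the unique tracial state on $C(\hh\G)$. The strategy is to manufacture a second tracial state from the closed quantum subgroup $\hh\G_F \le \hh\G$ and show that equality of traces forces the subgroup inclusion to be an isomorphism.

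First I would observe that $\hh\G_F$ inherits the Kac property from $\hh\G$: since $\G$ is unimodular, the antipode $S_{\hh\G}$ is involutive on $\fO(\hh\G)$, and since the quotient morphism $\Pi : \fO(\hh\G) \to \fO(\hh\G_F)$ intertwines antipodes and is surjective, $S_{\hh\G_F}$ is involutive on $\fO(\hh\G_F)$ as well. Hence the Haar state $h_{\hh\G_F}$ is a tracial state on $C(\hh\G_F)$. Next, by Corollary~\ref{wcFb}, unimodularity of $\G$ yields $\Gamma_{\G_F}\prec\lambda_\G$, which is equivalent to saying the morphism $\Pi$ descends to a surjective $*$-homomorphism $\tilde{\Pi} : C(\hh\G) \to C(\hh\G_F)$ at the reduced level, satisfying $\tilde{\Pi}\circ\lambda_\G = \lambda_{\G_F}\circ\Pi$.

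Now define
\[
\tau := h_{\hh\G_F}\circ \tilde{\Pi} : C(\hh\G) \to \C.
\]
Since $\tilde{\Pi}$ is a $*$-homomorphism and $h_{\hh\G_F}$ is a trace, $\tau$ is a tracial state on $C(\hh\G)$. By the unique trace hypothesis, $\tau = h_{\hh\G}$. Evaluating this identity on $\lambda_\G(a)$ for $a\in\fO(\hh\G)$ and using the intertwining relation gives
\[
h_{\hh\G_F}(\Pi(a)) = h_{\hh\G}(a) \quad\text{for all } a\in\fO(\hh\G).
\]

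To finish, I would use this equation together with faithfulness of $h_{\hh\G}$ on $\fO(\hh\G)$ (a standard fact of Woronowicz) to conclude $\Pi$ is injective on $\fO(\hh\G)$: indeed if $\Pi(a)=0$ then $h_{\hh\G}(a^*a)=h_{\hh\G_F}(\Pi(a)^*\Pi(a))=0$, forcing $a=0$. Since $\Pi$ is already surjective, it is a Hopf $*$-algebra isomorphism $\fO(\hh\G)\cong\fO(\hh\G_F)$, which at the level of Vaes quantum subgroups gives $\ell^\infty(\G_F)=\ell^\infty(\G)$, i.e.\ $\G_F=\G$. Thus the action of $\G$ on $\partial_F\G$ is faithful. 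The main pitfall to watch for is the inheritance of the Kac property to $\hh\G_F$; once that is verified the rest is a clean application of Corollary~\ref{wcFb} together with faithfulness of the Haar state on the Hopf $*$-algebra.
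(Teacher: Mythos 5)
Your proof is correct and follows essentially the same route as the paper: both use Corollary~\ref{wcFb} to descend $\Pi$ to the reduced level, the Kac property of $\hh\G_F$ (inherited from $\hh\G$) to make $h_{\hh\G_F}\circ\tilde\Pi=\omega_{\hh\G_F}$ a trace on $C(\hh\G)$, and the unique trace hypothesis to force it to equal $h_{\hh\G}$. Your final step via faithfulness of the Haar state is just a slightly more explicit way of deducing $\hh\G_F=\hh\G$ from $\omega_{\hh\G_F}=h_{\hh\G}$.
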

\begin{proof}
    Using Corollary~\ref{wcFb}, we have that $\hh\G_F\backslash\hh\G$ is coamenable. Since $\hh\G_F\leq \hh\G$, and $\hh\G$ is Kac type, $\hh\G_F$ is Kac type. Therefore $\omega_{\hh\G_F}$ is tracial. By the unique trace property, $\omega_{\hh\G_F} = h_{\hh\G}$, hence $\hh\G_F = \hh\G$.
\end{proof}
A discrete quantum group $\G$ is called {\textit {\cst-simple}} if $C(\hh\G)$ is a simple \cst-algebra. The following theorem is a noncommutative version of the second claim in \cite{MR3735864}*{Theorem 1.3}.
\begin{theorem}\label{thm:faith-Fur-bnd-->unq-trc}
     Let $\G$ be a discrete quantum group that is either unimodular or exact. If $\G$ is \cst-simple then the action of $\G$ on its Furstenberg boundary is faithful.
\end{theorem}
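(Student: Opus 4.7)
The plan is to adapt the strategy of the corollary immediately preceding the theorem, substituting $C^*$-simplicity for the unique trace property. Faithfulness of $\alpha_F$ is by definition the equality $\hh\G_F = \hh\G$ of closed quantum subgroups, so this is what I aim to establish.

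First I would invoke Corollary~\ref{wcFb}, which is the place where the hypothesis ``unimodular or exact'' gets used: the coideal $\hh\G_F\backslash\hh\G$ is coamenable, so the Woronowicz-type morphism $\Pi : C^u(\hh\G)\to C^u(\hh\G_F)$ encoding $\hh\G_F\leq\hh\G$ descends to a surjective, comultiplication-intertwining, unital $*$-homomorphism $\tilde\Pi : C(\hh\G)\to C(\hh\G_F)$ satisfying $\tilde\Pi\circ\Lambda_{\hh\G} = \Lambda_{\hh\G_F}\circ\Pi$.

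Next I would use $C^*$-simplicity of $\G$, which says $C(\hh\G)$ is a simple $C^*$-algebra. Since $C(\hh\G_F)$ is nontrivial, $\tilde\Pi$ is nonzero, so its kernel is a proper closed two-sided ideal of $C(\hh\G)$ and must vanish. Thus $\tilde\Pi$ is a $*$-isomorphism between the reduced compact quantum groups $\hh\G$ and $\hh\G_F$.

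Finally I would translate this back to equality of quasi-subgroups. Uniqueness of the Haar state on a compact quantum group gives $h_{\hh\G_F}\circ \tilde\Pi = h_{\hh\G}$, so that $\omega_{\hh\G_F} = h_{\hh\G_F}\circ \Pi = h_{\hh\G}$ on $\fO(\hh\G)$. Consequently the $\hh\G$-equivariant conditional expectation $E_{\omega_{\hh\G_F}} = (\omega_{\hh\G_F}\otimes\id)\circ \Delta_{\hh\G}$ degenerates to $a\mapsto h_{\hh\G}(a)\cdot 1$, so its range $\fO(\hh\G_F\backslash\hh\G)$ collapses to $\mathbb{C}$. Under the bijective correspondence between compact quasi-subgroups of $\hh\G$ and their coideal subalgebras (so that $\mathbb{C}$ corresponds to the ``full'' quasi-subgroup $\hh\G$), this forces $\hh\G_F = \hh\G$ as desired.

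The step I expect to require the most care is the very last one: making sure that the injective reducing morphism $\tilde\Pi$ actually forces the idempotent $\omega_{\hh\G_F}$ to be the full Haar state $h_{\hh\G}$, and that this in turn forces the closed quantum subgroup $\hh\G_F$ to coincide with $\hh\G$ rather than merely being abstractly isomorphic to it. This step plays the role that the unique trace property plays in the preceding corollary -- there, tracial $\omega_{\hh\G_F}$ (by Kac type) together with uniqueness of the trace yields $\omega_{\hh\G_F}=h_{\hh\G}$; here the same identification is achieved via simplicity of $C(\hh\G)$ combined with uniqueness of the Haar state.
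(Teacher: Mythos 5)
Your proposal is correct and follows essentially the same route as the paper: Corollary \ref{wcFb} supplies the reduced morphism $\tilde\Pi$ (equivalently, $\Gamma_{\G_F}\prec\lambda_\G$), and \cst-simplicity upgrades the resulting surjection $C(\hh\G)\to C(\hh\G_F)$ to an isomorphism, forcing $\hh\G_F=\hh\G$. Your final step --- identifying $\omega_{\hh\G_F}$ with $h_{\hh\G}$ via uniqueness of the Haar state and collapsing the coideal to $\C$ --- is a correct and somewhat more explicit unwinding of the paper's terse conclusion that the isomorphism ``implies faithfulness of $\alpha$.''
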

\begin{proof}
By  Corollary \ref{wcFb}, $\Gamma_{\G_F} \prec\lambda_\G$. But $\G$ is \cst-simple so they are equivalent, $\Gamma_{\G_F} \sim\lambda_\G$, and $C(\hh\G)\cong C(\hh\G_F)$ which implies faithfulness of $\alpha$.
\end{proof}
Recall that a unimodular discrete quantum group $\G$ has the unique trace property if the Haar state $h_{\hh\G}$ is the unique trace on $C(\hh\G)$.  According to \cite{MR4442841}*{Theorem 5.3}, if a unimodular discrete quantum group has a faithful action on its Furstenberg boundary, then $\G$ has the unique trace property. Therefore, the most important consequence of Theorem \ref{thm:faith-Fur-bnd-->unq-trc} is as follows.
\begin{corollary}
    Let $\G$ be a \cst-simple  discrete quantum group. If $\G$ is unimodular, then $\G$ has the unique trace property. If $\G$ is not unimodular, then $C(\hh\G)$  does not have any KMS-state for the scaling automorphism group at the inverse temperature 1.
\end{corollary}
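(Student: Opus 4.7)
The plan is to feed $C^*$-simplicity into Theorem~\ref{thm:faith-Fur-bnd-->unq-trc} to obtain faithfulness of the action $\G\act C(\partial_F\G)$, and then to invoke \cite{MR4442841}*{Theorem 5.3}, which converts such faithfulness into the statement that the Haar state $h_{\hh\G}$ is the only possible KMS-state of $C(\hh\G)$ for the scaling automorphism group at inverse temperature~$1$. The two conclusions of the corollary then differ only in whether $h_{\hh\G}$ itself actually realizes this candidacy, so the proof splits cleanly by unimodularity.

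For the unimodular case I would apply Theorem~\ref{thm:faith-Fur-bnd-->unq-trc} in its unimodular branch to conclude that $\G\act\partial_F\G$ is faithful, and then invoke \cite{MR4442841}*{Theorem 5.3} directly: since $\hh\G$ is of Kac type in the unimodular setting, the scaling group is trivial and its KMS-states at $\beta=1$ coincide with the tracial states on $C(\hh\G)$, so uniqueness of such a KMS-state is exactly the unique trace property for $\G$.

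For the non-unimodular case I would feed $C^*$-simplicity together with the exactness hypothesis (implicitly inherited from the setting of Theorem~\ref{thm:faith-Fur-bnd-->unq-trc}) into the exact branch, again securing faithfulness of the boundary action; then \cite{MR4442841}*{Theorem 5.3} shows that if any KMS-state of $C(\hh\G)$ for the scaling group at inverse temperature~$1$ existed, it would be forced to equal $h_{\hh\G}$. One then rules out $h_{\hh\G}$ itself by noting that its modular automorphism group coincides with the scaling automorphism group precisely in the Kac (equivalently, unimodular) setting, so non-unimodularity forces $h_{\hh\G}$ to fail the KMS condition at $\beta=1$ for the scaling group. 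Consequently, no such KMS-state exists at all.

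The only non-formal step --- and therefore the main obstacle --- is this last one: checking cleanly that non-unimodularity of $\G$ really does rule out $h_{\hh\G}$ as a scaling-group KMS-state at $\beta=1$. This reduces to the standard comparison between Woronowicz's modular group of $h_{\hh\G}$ and the scaling group of $\hh\G$, combined with uniqueness of the modular group at $\beta=1$, so I do not expect any conceptual difficulty --- merely careful bookkeeping with the modular/scaling data on $C(\hh\G)$.
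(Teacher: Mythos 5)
Your proposal is correct and follows essentially the same route as the paper, whose entire proof is the one-line citation of Theorem~\ref{thm:faith-Fur-bnd-->unq-trc} together with \cite{MR4442841}*{Theorem 5.3}; your extra step ruling out $h_{\hh\G}$ as a scaling-group KMS-state via uniqueness of the modular group in the non-Kac case is sound and is exactly the content packaged into the non-unimodular clause of \cite{MR4442841}*{Theorem 5.3}. You are also right to flag that the non-unimodular branch only goes through under the exactness hypothesis of Theorem~\ref{thm:faith-Fur-bnd-->unq-trc}: the corollary as printed omits this assumption, so strictly speaking its second assertion is only established for exact $\G$, a gap in the paper's statement rather than in your argument.
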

\begin{proof}
    The proof is based on Theorem \ref{thm:faith-Fur-bnd-->unq-trc} and \cite{MR4442841}*{Theorem 5.3}.
\end{proof}
\begin{example}
   The following examples are discrete quantum groups that are both exact and \cst-simple.  Thanks to our work, we know that they act faithfully on their Furstenberg boundaries.
    In the unimodular cases, it has been previously proven that the unique trace property holds.
     However, in the non-unimodular cases, they do not have any KMS-state for the scaling automorphism group when the temperature is at the inverse of 1.
    \begin{itemize}
        \item the unitary free quantum groups $\mathbb{F} U_Q$ (see \cite{MR1484551} for a definition and for the claims see \cite{MR1484551}*{Theorem 3} and \cite{MR3084500});
        \item the orthogonal free quantum groups $\mathbb{F}O_Q$ where $||Q||^8 \leq \frac{3}{8}\mathrm{tr}(QQ^*)$ (see \cite{MR2355067}*{Theorem 7.2}. (Note also that this result is not new (see below)));
        
        \item the duals of quantum automorphism groups $\widehat{\mathrm{QAut}(B, tr)}$ of a quantum space $(B, tr)$, where $B$ is a finite-dimensional \cst-algebra such that $dim(B) \geq 8$ and $tr$ is a $\delta$-trace (see \cite{MR3138849} for a definition and for the claims see \cite{MR3138849}*{Corollary 5.12} and \cite{MR3138849}*{Corollary 4.17});

        \item the duals of free wreath product quantum groups $\widehat{H_N^+(\Gamma)} := \widehat{\widehat{\Gamma}\wr S^+_N}$, where $\Gamma$ is an exact discrete group, $N\geq 8$, and $S^+_N$ is the quantum permutation group acting on $\C^N$ (see \cite{FT23} for a definition and for the claims see \cite{MR3250372}*{Theorem 3.5} and \cite{FT23}*{Theorem 4.1}).
    \end{itemize}
    We make two additional remarks regarding the above examples.
    \begin{itemize}
        \item It was proved with \cite{MR4442841}*{Corollary 7.14} that $\mathbb{F}O_Q$ acts faithfully on $C(\partial_F \mathbb{F}O_Q)$ whenever $Q\in M_N$ where $N\geq 3$. Most notably, this proves $C(O_Q^+)$ has the unique trace property (in the unimodular case) where it was previously only known when $||Q||^8 \leq \frac{3}{8}\mathrm{tr}(QQ^*)$.
        \item Brannan, Gao, and Weeks shared with us a preprint \cite{BGW} that contains the following result: let $\G = \widehat{\mathrm{QAut}(B, tr)}$ of a quantum space $(B, tr)$, where $B$ is a finite-dimensional \cst-algebra such that $dim(B) \geq 4$ and $tr$ is a $\delta$-trace. The action of $\G$ on $C(\partial_F\G)$ is faithful. Their proof is an adaptation of the proof of \cite{MR4442841}*{Corollary 7.14} and, most notably, proves that $C(\mathrm{QAut}(B, tr))$ has the unique trace property (it was previously only known when $dim(B) \geq 8$).
    \end{itemize}
\end{example}
Let $\alpha: A \to M(c_0(\G) \ot A)$ be a (left) action of a discrete quantum group $\G$ on a \cst-algebra $A$.
    A \emph{covariant representation} of $(A,\alpha)$ into a
    \cst-algebra $B(H_\pi)$ is a pair $(\rho,X_\pi)$ where $\rho : A \to
    B(H_\pi)$ is a non-degenerate $*$-homomorphism and $X_\pi \in \ell^\infty(\G)\otimes B(H_\pi)$ is a unitary corepresentation of $\G$ assigned to the $\G$-representation $\pi:\fO(\hh\G)\to B(H_\pi)$ satisfying the covariance relation
    $$(\id \ot \rho)\alpha(a) = X_\pi^*(1 \ot \rho(a))X_\pi \quad\text{for
        all}\quad a \in A \; .$$
        For any covariant representation $(\rho,X_\pi)$ of $(A,\alpha)$,
    the closed linear span of
    \[
    \rho(A) \{(\om \ot \id)(X_\pi) \mid \om \in
    \ell^\infty(\hh\G)_*\}
    \]
    is a \cst-algebra generated by $(\rho,X_\pi)$, denoted by $C^{\rho,\pi}(\hh\G)$ \cite{MR2355067}*{Definition 1.25, Proposition 1.26}.

 We emphasize that a universal version of \cite{MR4442841}*{Lemma 5.2} holds. In particular, if $\G$ is unimodular then a state on $C^u(\hh\G)$ is $\G$-invariant iff it is tracial. It is straightforward to then observe that a state on $C^\pi(\hh\G)$ for any $\G$-representation $\pi$ is tracial iff it is $\G$-invariant. In the next Theorem, we generalize \cite{MR4442841}*{Theorem 5.3} to arbitrary covariant representations $(\rho, X_\pi)$ where $C^\pi(\hh\G)$ admits a trace.  

\begin{theorem}\label{faith-Fur-bnd-->unq-trc}
    Let $\G$ be a unimodular discrete quantum group and assume that the action $\alpha:\G\act C(\fb)$   is faithful.  If there exists a covariant  representation $(\rho,X_\pi)$ of $(C(\fb),\alpha)$ such that the \cst-algebra $C^\pi(\hh\G)$ admits a trace $\tau$, then $\lambda_\G \prec\pi$. Moreover, $C^\pi(\hh\G)$ has a unique trace.
\end{theorem}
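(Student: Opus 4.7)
My approach follows the Breuillard--Kalantar--Kennedy--Ozawa strategy \cite{MR3735864}, extended to the quantum setting and to covariant representations as in \cite{MR4442841}*{Theorem 5.3}. The first step is to invoke the observation immediately preceding the theorem: since $\G$ is unimodular, the trace $\tau$ on $C^\pi(\hh\G)$ is $\G$-invariant under the action $\ad_\pi$, that is, $(\id\otimes\tau)\circ\ad_\pi(x) = \tau(x)\cdot 1$ for all $x\in C^\pi(\hh\G)$. Extend $\tau$ to a state $\tilde\tau$ on $B(H_\pi)$ via Hahn--Banach.

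Next, I define the u.c.p. map $\Phi : B(H_\pi)\to\ell^\infty(\G)$ by $\Phi(T) = (\id\otimes\tilde\tau)(X_\pi^*(1\otimes T)X_\pi)$ and compose with $\rho$. Using the covariance relation $(\id\otimes\rho)\alpha(a) = X_\pi^*(1\otimes\rho(a))X_\pi$, a direct computation shows that $\Phi\circ\rho$ equals the Poisson transform $\fP_\mu$ of the state $\mu := \tilde\tau\circ\rho\in S(C(\fb))$. Since Poisson transforms are $\G$-equivariant, $\fP_\mu$ is a $\G$-equivariant u.c.p.\ map, and by the $\G$-essentiality of $C(\fb)$ (Remark~\ref{F Boundary}) it is completely isometric. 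In particular, $\mu$ is faithful on $C(\fb)$.

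The main obstacle is to exploit faithfulness of $\alpha$ together with the completely isometric $\G$-equivariant embedding $\fP_\mu : C(\fb)\hookrightarrow \ell^\infty(\G)$ to conclude that $\tau\circ\pi = h_{\hh\G}$. My proposed route is: combine $\G$-rigidity of $C(\fb)$ with any $\G$-equivariant u.c.p.\ projection $\ell^\infty(\G)\to C(\fb)$ coming from $\G$-injectivity to show that the embedding $\fP_\mu$ is rigidly determined, and use the identification $\ell^\infty(\G_F) = \ell^\infty(\G)$ (by faithfulness of $\alpha$) to propagate this rigidity back to all of $\ell^\infty(\G)$. The covariance relation for $X_\pi$ then transports the canonical values of $\mu$ on $C(\fb)$ into the required identity $\tau(\pi(a)^*\pi(b)) = h_{\hh\G}(a^*b)$ for $a,b\in\fO(\hh\G)$ via the coideal/Poisson machinery developed in Sections \ref{sec4}--\ref{sec5}. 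This is the technical heart of the argument, and bridging it may in practice require replacing $\tilde\tau$ by a carefully chosen weak$^*$ cluster point of translates so as to achieve $\G$-invariance on $B(H_\pi)$ itself.

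Once $\tau\circ\pi = h_{\hh\G}$ is established, the GNS construction for $h_{\hh\G}$ yields (up to equivalence) the left regular representation $\lambda_\G$, and this appears as a subrepresentation of the GNS of $\tau$, whose operator norm is bounded by $\|\pi(\cdot)\|$. Hence $\|\lambda_\G(a)\| \leq \|\pi(a)\|$ for all $a\in\fO(\hh\G)$, giving $\lambda_\G\prec\pi$. The uniqueness of the trace on $C^\pi(\hh\G)$ then follows automatically: any other trace $\tau'$ would satisfy the same identity $\tau'\circ\pi=h_{\hh\G}=\tau\circ\pi$, so $\tau$ and $\tau'$ agree on the norm-dense subalgebra $\pi(\fO(\hh\G))\subseteq C^\pi(\hh\G)$, forcing $\tau=\tau'$.
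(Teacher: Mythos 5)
Your opening moves (the $\G$-invariance of $\tau$ from unimodularity, the covariance relation, and the appeal to boundary rigidity/essentiality) are the right ingredients, and your closing steps (GNS of $h_{\hh\G}$ gives $\lambda_\G\prec\pi$, and uniqueness by $\tau'\circ\pi=h_{\hh\G}=\tau\circ\pi$ on the dense subalgebra) match the paper. But the step you yourself flag as ``the technical heart'' is a genuine gap, not a routine bridge. Knowing that $\fP_\mu:C(\fb)\to\ell^\infty(\G)$ is a completely isometric $\G$-equivariant embedding, or that $\mu$ is faithful, says nothing about the values of $\tau$ on the matrix coefficients $(\omega\otimes\id)(X_\pi)$, which is what must be computed; ``rigidity of the embedding'' does not propagate to the $\pi$-leg. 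Moreover, your Hahn--Banach extension of $\tau$ to a state $\tilde\tau$ on $B(H_\pi)$ is the wrong kind of extension: to run the covariance computation you need $\rho(C(\fb))$ to lie in the \emph{multiplicative domain} of the extension, so that $\tilde\tau\bigl(\rho(a)\,(\omega\otimes\id)(X_\pi)\bigr)$ factors. A scalar-valued extension would require $\tilde\tau\circ\rho$ to be a character of $C(\fb)$, which is not available.

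The paper's device is to extend $\tau$ not to a state but to a $\G$-equivariant u.c.p.\ map $\tilde\tau:C^{\rho,\pi}(\hh\G)\to C(\fb)$ with values in the boundary itself, using $\G$-injectivity of $C(\fb)$ (here $C^{\rho,\pi}(\hh\G)$ is the algebra generated by the covariant pair). Then $\G$-rigidity forces $\tilde\tau\circ\rho=\id_{C(\fb)}$, so $\rho(C(\fb))$ sits in the multiplicative domain, and the covariance relation yields, for $x=(\id\otimes\tau)(X_\pi)$, the identity $xy=\varepsilon_\G(y)x$ for all $y\in\fP_\mu(C(\fb))$, hence for all $y\in\ell^\infty(\G_F)=\ell^\infty(\G)$ by faithfulness of $\alpha$. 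Feeding this into the pentagon relation for $\ww_\G$ shows $\tau\circ\pi$ is a $\hh\G$-invariant state, i.e.\ $\tau\circ\pi=h_{\hh\G}$, which is exactly the identity you were unable to reach. So the overall architecture of your argument is sound, but the extension must target $C(\fb)$ rather than $\C$, and the commutation relation $xy=\varepsilon_\G(y)x$ is the concrete mechanism by which faithfulness of $\alpha$ enters; without it the proof does not close.
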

\begin{proof}    
    Since $\tau$ is a u.c.p. $\G$-equivariant map, one can define a u.c.p. $\G$-equivariant map $\tilde\tau:C^{\rho,\pi}(\hh\G)\to C(\fb)$ that extends $\tau$, using $\G$-injectivity of $C(\fb)$.
    
    Since the restriction of $\tilde\tau$ to $C(\fb)$ is the identity map so $\tilde\tau(\rho(a)\big( (\omega\otimes\id)X_\pi\big))=a \tau((\omega\otimes\id)X_\pi)$. Moreover, for every $\omega\in\ell^1(\G)$,
 \[
 \rho(a)\big( (\omega\otimes\id)X_\pi\big)=(\omega\otimes\id)(\I\otimes\rho(a))X_\pi=(\omega\otimes\id)\big(X_\pi (\id\otimes\rho)\alpha(a)\big),
 \]
where we used covariance relation for the last equality.     Applying $\tilde\tau$ on the above equations, for any $\mu\in C(\fb)^*$ we have
    $$\mu(a) (\omega\otimes\tau)X_\pi=(\omega\otimes\tau)X_\pi(\mu\ast a\otimes \I)$$
    In particular, setting $x = (\id\otimes\tau)(X_\pi)$ we have $\mu(a) x = x (\fP_\mu(a))$
    for all $\mu\in C(\fb)^*$ and $a\in C(\fb)$. Since $\mu(a) = \varepsilon(\fP_\mu(a))$ this yields $\varepsilon(y)x = xy$ for all $y\in \ell^\infty(\G_F)$. By faithfulness of $\alpha$, $\ell^\infty(\G_F)=\ell^\infty(\G)$, so for every $\omega\in C(\hh\G)^*$,
    \begin{align*}
    \omega(1)(\id\otimes\tau\circ\pi)(\ww_\G)&=\big((\id\otimes\tau\circ\pi)(\ww_\G)\big)\big((\id\otimes\omega)\ww_\G\big)=(\id\otimes\tau\circ\pi\otimes\omega)(\ww_\G)_{12}(\ww_\G)_{13}\\
    &=(\id\otimes\tau\circ\pi\otimes\omega)(\ww_\G)_{23}(\ww_\G)_{12}(\ww_\G)_{23}^*=(\id\otimes\tau\circ\pi\otimes\omega)(\id\otimes\Delta_{\hh\G})\ww_\G.
    \end{align*}
 Note that $C^u(\hh\G)^* \to \ell^\infty(\G), \mu\mapsto (\id\otimes \mu)(W_\G)$ is an injective homomorphism with dense range, hence $\varepsilon(y)x = xy$ implies $\omega*(\tau\circ\pi) = \omega(1)\tau\circ\pi$ for all $\omega\in C^u(\hh\G)^*$.
    This relation shows that $\tau\circ\pi$ is a $\hh\G$-invariant state on $\fO(\hh\G)$ so $\tau\circ\pi=h_{\hh\G}\circ\lambda_\G$ . Since  $h_{\hh\G}$ is faithful, we have $\lambda_\G \prec\pi$. Finally, If $\tau'$ is another trace on $C^\pi(\hh\G)$ then by the same argument $\tau'\circ\pi = h_{\hh\G}\circ \lambda_\G = \tau\circ\pi$.
\end{proof}

\subsection*{Acknowledgements}
B.A-S. would like to thank Roland Vergnioux and Lucas Hataishi for stimulating conversations on various aspects of this project. F.K. would like to express her gratitude to  Mehrdad Kalantar for numerous beneficial discussions and for his contribution in the initial stages of this project. F.K. is especially thankful to Roland Vergnioux and the Analysis group of the University of Caen Normandy for their warm hospitality and support during her visit in September 2022. Moreover, we would like to express our appreciation to the Fields Institute for Research in Mathematical Sciences, where we completed the final part of this work during the Thematic Program on Operator Algebras and Applications in September 2023. Lastly, we are grateful to the authors of \cite{BGW} for kindly sharing their preprint with us.

\end{document}